\def\diff{\mathrm{d}}                 
\def\1i{\imath}                     
\def\iid{\textrm{i.i.d.}\xspace}                
\def\argmin{\mathop{\hbox{argmin}}}
\numberwithin{equation}{section}
\theoremstyle{plain}
\newtheorem{theorem}{Theorem}[section]
\newtheorem{lemma}[theorem]{Lemma}
\theoremstyle{remark}
\newtheorem{remark}{Remark}
\newcommand{\R}{\mathbb{R}}
\newcommand{\goto}{\rightarrow}
\newcommand{\sgn}{\mathop\mathrm{sgn}}
\renewcommand{\P}{\operatorname{\mathbb{P}}}
\newcommand{\E}{\operatorname{\mathbb{E}}}
\renewcommand{\Theta}{\Pi}
\newcommand{\e}{\mathrm{e}}
\newcommand{\transp}{\top}
\newcommand{\supp}[1]{\operatorname{supp}(#1)}
\newcommand{\half}{{\textstyle\frac{1}{2}}}
\newcommand{\bX}{\bm{X}}
\newcommand{\bb}{\bm{b}}
\newcommand{\by}{\bm{y}}
\newcommand{\bz}{\bm{z}}
\newcommand{\bbeta}{\bm{\beta}}
\newcommand{\bg}{\bm{g}}
\newcommand{\fdp}{\textnormal{FDP}}
\newcommand{\power}{\textnormal{TPP}}
\newcommand{\mfdr}{q^{\star}}
\newcommand{\plim}{\xrightarrow{ ~ \mathbb{P} ~}}
\newcommand{\ffdp}{\textnormal{fdp}^{\mathsmaller \infty}}
\newcommand{\fpower}{\textnormal{tpp}^{\mathsmaller \infty}}
\newcommand{\ftpp}{\textnormal{tpp}^{\mathsmaller \infty}}
\newcommand{\fd}{\textnormal{fd}^{\mathsmaller\infty}}
\newcommand{\td}{\textnormal{td}^{\mathsmaller\infty}}
\newcommand{\T}{\top}
\begin{document}

\begin{frontmatter}
\title{False Discoveries Occur Early on the Lasso Path}
\runtitle{False Discoveries on Lasso Path}

\begin{aug}
\author{\fnms{Weijie} \snm{Su}\thanksref{t1}\ead[label=e1]{suw@wharton.upenn.edu}}
\and
\author{\fnms{Ma{\l}gorzata} \snm{Bogdan}\thanksref{t2}\ead[label=e2]{malgorzata.bogdan@pwr.edu.pl}}
\and
\author{\fnms{Emmanuel} \snm{Cand\`es}\thanksref{t3}\ead[label=e3]{candes@stanford.edu}}

\thankstext{t1}{Supported in part by NSF under grant CCF-0963835.}
\thankstext{t2}{Supported in part by the European Union's 7th
Framework Programme for research, technological development and
demonstration under Grant Agreement No.~602552 and co-financed by the
Polish Ministry of Science and Higher Education under Grant Agreement
2932/7.PR/2013/2}
\thankstext{t3}{Supported in part by NSF under grant CCF-0963835 and by the Math + X Award from the Simons Foundation.}
\runauthor{W.~Su, M.~Bogdan, and E.~Cand\`es}

\affiliation{University of Pennsylvania, University of Wroclaw, and Stanford University}

\address{Departments of Statistics\\
The Wharton School\\
University of Pennsylvania\\
Philadelphia, PA 19104\\
USA\\
\printead{e1}}


\address{Institute of Mathematics\\
University of Wroclaw\\
50-137, Wroclaw\\
Poland\\
\printead{e2}}

\address{Departments of Statistics and Mathematics\\
Stanford University\\
Stanford, CA 94305\\
USA\\
\printead{e3}}
\end{aug}

\begin{abstract}
  In regression settings where explanatory variables have very low
  correlations and there are relatively few effects, each of large
  magnitude, we expect the Lasso to find the important variables with
  few errors, if any. This paper shows that in a regime of linear
  sparsity---meaning that the fraction of variables with a
  non-vanishing effect tends to a constant, however small---this cannot
  really be the case, even when the design variables are
  stochastically independent. We demonstrate that true features and
  null features are always interspersed on the Lasso path, and that
  this phenomenon occurs no matter how strong the effect sizes are. We
  derive a sharp asymptotic trade-off between false and true positive
  rates or, equivalently, between measures of type I and type II
  errors along the Lasso path. This trade-off states that if we ever
  want to achieve a type II error (false negative rate) under a
  critical value, then anywhere on the Lasso path the type I error
  (false positive rate) will need to exceed a given threshold so that
  we can never have both errors at a low level at the same time. Our
  analysis uses tools from approximate message passing (AMP) theory as
  well as novel elements to deal with a possibly adaptive selection of
  the Lasso regularizing parameter.
\end{abstract}

\begin{keyword}[class=MSC]
\kwd[Primary ]{62F03}
\kwd[; secondary ]{62J07}
\kwd{62J05}.
\end{keyword}

\begin{keyword}
\kwd{Lasso}
\kwd{Lasso path}
\kwd{false discovery rate}
\kwd{false negative rate}
\kwd{power}
\kwd{approximate message passing (AMP)}
\kwd{adaptive selection of parameters}
\end{keyword}

\end{frontmatter}

\section{Introduction}\label{sec:introduction}

Almost all data scientists know about and routinely use the Lasso
\cite{santosa1986linear,Lasso} to fit regression models. In the big
data era, where the number $p$ of explanatory variables often exceeds
the number $n$ of observational units, it may even supersede the
method of least-squares. One appealing feature of the Lasso over
earlier techniques such as ridge regression is that it automatically
performs variable reduction, since it produces models where lots
of---if not most---regression coefficients are estimated to be exactly
zero. In high-dimensional problems where $p$ is either comparable to
$n$ or even much larger, the Lasso is believed to select those
important variables out of a sea of potentially many irrelevant
features.

Imagine we have an $n \times p$ design matrix $\bX$ of features, and
an $n$-dimensional response $\by$ obeying the standard linear model
\[
\by = \bX \bbeta+ \bz,
\]
where $\bz$ is a noise term. The Lasso is the solution to
\begin{equation} 
\label{eq:Lasso}
\widehat \bbeta(\lambda) = \argmin_{\bm b \in \R^p} \,\,\, \half \| \by - \bX \bb\|^2 + \lambda \,  \|\bb\|_1;
\end{equation}
if we think of the noise term as being Gaussian, we interpret it as a
penalized maximum likelihood estimate, in which the fitted
coefficients are penalized in an $\ell_1$ sense, thereby encouraging
sparsity. (There are nowadays many variants on this idea including
$\ell_1$-penalized logistic regression \cite{Lasso}, elastic nets
\cite{el_net}, graphical Lasso \cite{gLasso}, adaptive Lasso
\cite{zou2006adaptive}, and many others.) As is clear from
\eqref{eq:Lasso}, the Lasso depends upon a regularizing parameter
$\lambda$, which must be chosen in some fashion: in a great number of
applications this is typically done via adaptive or data-driven
methods; for instance, by cross-validation
\cite{pnasLasso,natureLasso1,natureLasso2,scienceLasso}. Below, we
will refer to the Lasso path as the family of solutions
$\widehat\bbeta(\lambda)$ as $\lambda$ varies between $0$ and
$\infty$. We say that a variable $j$ is selected at $\lambda$ if
$\widehat{\beta}_j(\lambda) \neq 0$.\footnote{We also say that a
  variable $j$ enters the Lasso path at $\lambda_0$ if there is there
  is $\varepsilon > 0$ such that $\widehat{\beta}_j(\lambda) = 0$ for
  $\lambda \in [\lambda_0 - \varepsilon, \lambda_0]$ and
  $\widehat{\beta}_j(\lambda) \neq 0$ for
  $\lambda \in (\lambda_0, \lambda_0 + \varepsilon]$. Similarly a
  variable is dropped at $\lambda_0$ if
  $\widehat{\beta}_j(\lambda) \neq 0$ for
  $\lambda \in [\lambda_0 - \varepsilon, \lambda_0)$ and
  $\widehat{\beta}_j(\lambda) =0$ for
  $\lambda \in [\lambda_0, \lambda_0 + \varepsilon]$.}

The Lasso is, of course, mostly used in situations where the true
regression coefficient sequence is suspected to be sparse or nearly
sparse. In such settings, researchers often believe---or, at least,
wish---that as long as the true signals (the nonzero regression
coefficients) are sufficiently strong compared to the noise level and
the regressor variables weakly correlated, the Lasso with a carefully
tuned value of $\lambda$ will select most of the true signals while
picking out very few, if any, noise variables.  This belief is
supported by theoretical asymptotic results discussed below, which
provide conditions for perfect support recovery, i.e.~for perfectly
identifying which variables have a non-zero effect, see
\cite{lassogaussian,wainwright2009information,reeves2013} for
instance. Since these results guarantee that the Lasso works well in
an extreme asymptotic regime, it is tempting to over-interpret what
they actually say, and think that the Lasso will behave correctly in
regimes of practical interest and offer some guarantees there as well.
However, some recent works such as \cite{fan2010sure} have observed
that the Lasso has problems in selecting the proper model in practical
applications, and that false discoveries may appear very early on the
Lasso path. This is the reason why
\cite{Buhlmann_book,Buhlmann_discussion,pokarowski} suggest that the
Lasso should merely be considered as a {\em variable screener} rather
than a {\em model selector}.

While the problems with the Lasso ordering of predictor variables are
recognized, they are often attributed to (1) correlations between
predictor variables, and (2) small effect sizes.  In contrast, the
novelty and message of our paper is that the selection problem also
occurs when the signal-to-noise ratio is infinitely large (no noise)
and the regressors are stochastically independent; we
  consider a random design $\bm X$ with independent columns, and as a result, all population correlations vanish (so the sample correlations are small). We also explain that this phenomenon is mainly due to
  the shrinkage of regression coefficients, and does not occur when
  using other methods, e.g.~an $\ell_0$ penalty in \eqref{eq:Lasso}
  rather than the $\ell_1$ norm, compare Theorem \ref{thm:l0} below.

Formally, we study the value of the false discovery proportion (FDP),
the ratio between the number of false discoveries and the total number
of discoveries, along the Lasso path.\footnote{Similarly, the TPP is
  defined as the ratio between the number of true discoveries and that
  of potential true discoveries to be made.} This requires notions of
true/false discoveries, and we pause to discuss this important point.
In high dimensions, it is not a trivial task to define what are true
and false discoveries, see
e.g.~\cite{Berk_2013,Max_False,Tibshirani_post_selection_2015,HD_Knockoffs,Taylor_post_selection_2016};
these works are concerned with a large number of correlated
regressors, where it is not clear which of these should be selected in
a model. In response, we have selected to work in the very special
case of {\em independent} regressors precisely to analyze a context
where such complications do not arise and it is, instead, quite clear
what true and false discoveries are. We classify a selected regressor
$\bm X_j$ to be a false discovery if it is stochastically independent from
the response, which in our setting is equivalent to $\beta_j = 0$.
Indeed, under no circumstance can we say that that such a variable,
which has zero explanatory power, is a true discovery.


 Having clarified this point
and as a setup for our theoretical findings, Figure~\ref{fig:compare}
studies the performance of the Lasso under a $1010 \times 1000$ a
random Gaussian design, where the entries of $\bm X$ are independent
draws from $\mathcal{N} (0,1)$.  Set
$\beta_1 = \cdots = \beta_{200} = 4$,
$\beta_{201} = \cdots = \beta_{1000} = 0$ and the errors to be
independent standard normals. Hence, we have 200 nonzero coefficients
out of 1000 (a relatively sparse setting), and a very large
signal-to-noise ratio (SNR). For instance, if we order the variables
by the magnitude of the least-squares estimate, which we can run since
$n =1010 > 1000=p$, then with probability practically equal to one, all
the top 200 least-squares discoveries correspond to true discoveries,
i.e.~variables for which $\beta_j = 4$.  This is in sharp contrast
with the Lasso, which selects null variables rather early. To be sure,
when the Lasso includes half of the true predictors so that the false
negative proportion falls below 50\% or true positive proportion (TPP)
passes the 50\% mark, the FDP has already passed 8\% meaning that we
have already made 9 false discoveries. The FDP further increases to
19\% the first time the Lasso model includes all true predictors,
i.e.~achieves full power (false negative proportion vanishes).
\begin{figure}[htp!]
\centering
\includegraphics[scale=0.7]{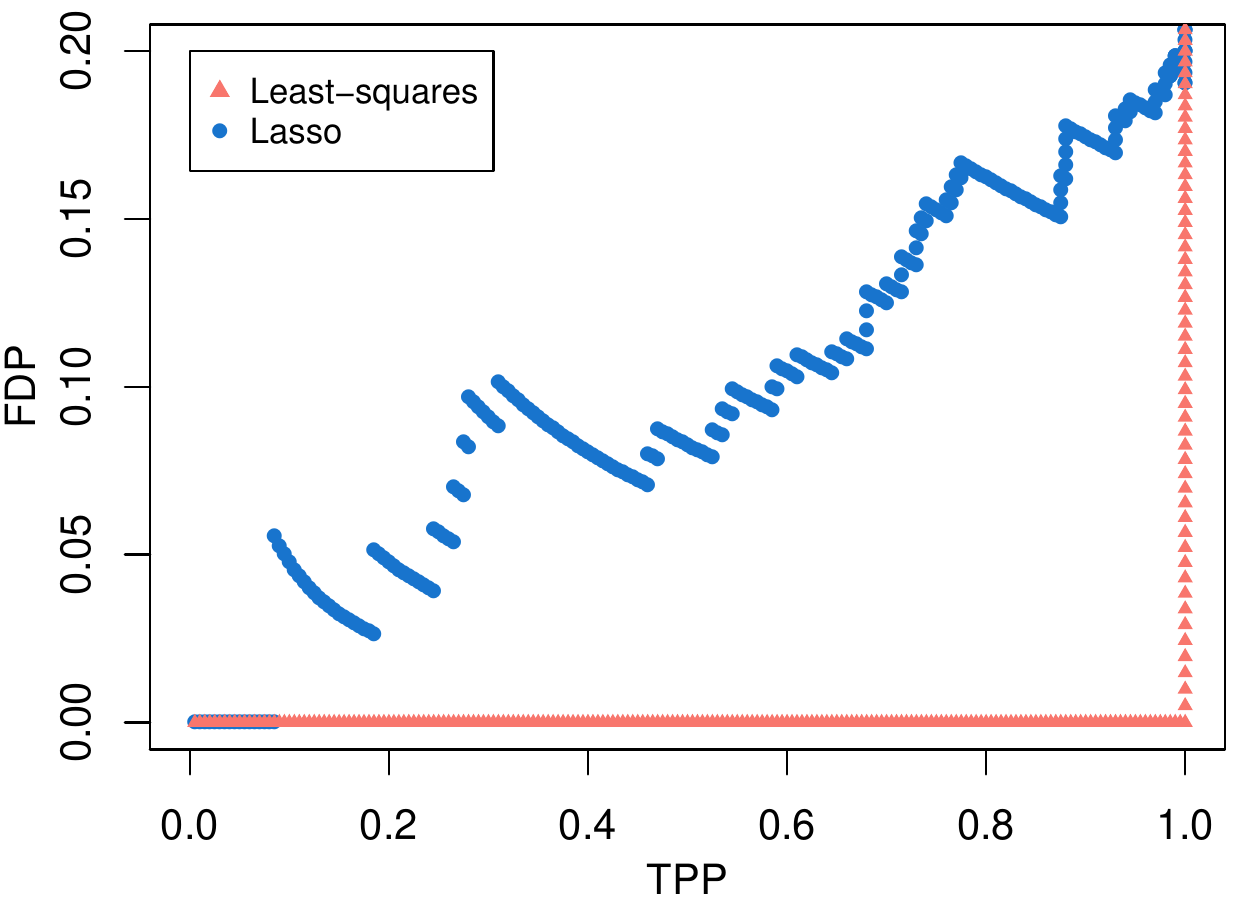}
\caption{True positive and false positive rates along the Lasso path
  {as compared to the ordering provided by the least-squares estimate}.}
\label{fig:compare}
\end{figure}

Figure~\ref{fig:intro_hist} provides a closer look at this phenomenon,
and summarizes the outcomes from 100 independent experiments under the
same Gaussian random design setting. In all the simulations, the first
noise variable enters the Lasso model before 44\% of the true signals
are detected, and the last true signal is preceded by at least 22 and,
sometimes, even 54 false discoveries. On average, the Lasso detects
about 32 signals before the first false variable enters; to put it
differently, the TPP is only 16\% at the time the first false
discovery is made. The average FDP evaluated the first time all
signals are detected is 15\%. For related empirical results, see
e.g.~\cite{fan2010sure}.
\begin{figure}[h!]
\centering
\includegraphics[scale=0.48]{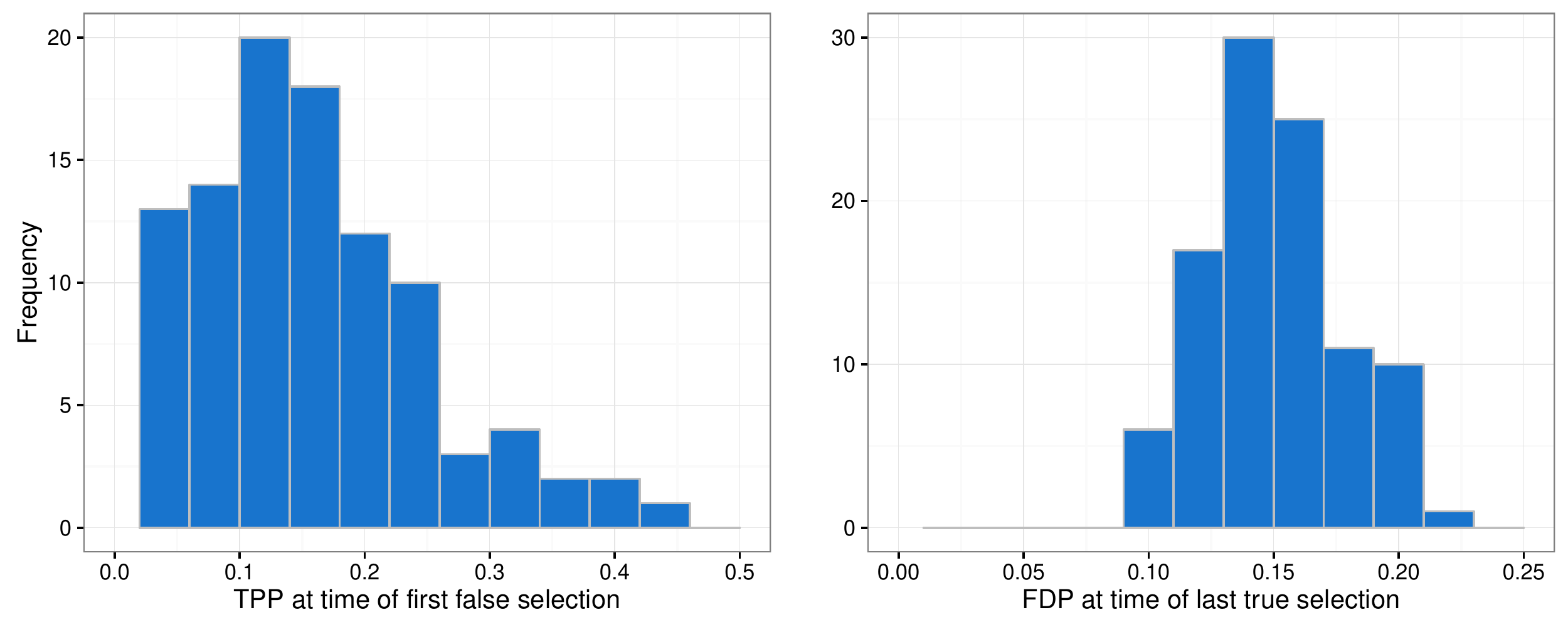}
\caption{Left: power when the first false variable enters the Lasso
  model. Right: false discovery proportion the first time power
  reaches one (false negative proportion vanishes).}
\label{fig:intro_hist}
\end{figure}


The main contribution of this paper is to provide a quantitative
description of this phenomenon in the asymptotic framework of {\em
  linear sparsity} defined below and previously studied e.g.~in
\cite{BM12}. Assuming a random design with independent Gaussian
predictors as above, we derive a fundamental Lasso trade-off between
power (the ability to detect signals) and type I errors or, said
differently, between the true positive and the false positive
rates. This trade-off says that it is impossible to achieve high power
and a low false positive rate simultaneously. Formally, we compute the
formula for an exact boundary curve separating achievable
$(\power, \fdp)$ pairs from pairs that are impossible to achieve no
matter the value of the signal-to-noise ratio (SNR). Hence, we prove
that there is a whole favorable region in the $(\power,\fdp)$ plane
that cannot be reached, see Figure~\ref{fig:phase} for an
illustration.

\section{The Lasso Trade-off Diagram}
\label{sec:main-results}

\subsection{Linear sparsity and the working model}

 We mostly work in the setting of \cite{BM12}, which
  specifies the design $\bX \in \R^{n \times p}$, the parameter
  sequence $\bbeta \in \R^p$ and the errors $\bm z \in \R^n$. The
  design matrix $\bX$ has \iid~$\mathcal{N}(0, 1/n)$ entries so that
  the columns are approximately normalized, and the errors $z_i$ are
  \iid~$\mathcal{N}(0, \sigma^2)$, where $\sigma$ is fixed but
  otherwise arbitrary. Note that we do not exclude the value
  $\sigma = 0$ corresponding to noiseless observations. The regression
  coefficients $\beta_1, \ldots, \beta_p$ are independent copies of a
  random variable $\Pi$ obeying $\E \Pi^2 < \infty$ and
  $\P(\Theta \ne 0) = \epsilon \in (0,1)$ for some constant
  $\epsilon$.  For completeness, $\bX, \bbeta$, and $\bz$ are all
  independent from each other. As in \cite{BM12}, we are interested in
  the limiting case where $p, n \goto \infty$ with
  $n/p \rightarrow \delta$ for some positive constant $\delta$.  A few
  comments are in order.

  \paragraph{Linear sparsity}  The first concerns the degree of
  sparsity. In our model, the expected number of nonzero regression
  coefficients is linear in $p$ and equal to $\epsilon \cdot p$ for
  some $\epsilon > 0$. Hence, this model excludes a form of
  asymptotics discussed in
  \cite{lassogaussian,wainwright2009information,reeves2013}, for
  instance, where the fraction of nonzero coefficients vanishes in the
  limit of large problem sizes. Specifically, our results do not
  contradict asymptotic results from \cite{lassogaussian} predicting
  perfect support recovery in an asymptotic regime, where the number
  of $k$ of variables in the model obeys
  $k/p \le \delta/(2\log p) \cdot (1 + o(1))$ and the effect sizes all
  grow like $c \cdot \sigma \sqrt{2\log p}$, where $c$ is an unknown
  numerical constant.  The merit of the linear sparsity regime lies in
  the fact that our theory makes accurate predictions when describing
  the performance of the Lasso in practical settings with moderately
  large dimensions and reasonable values of the degree of sparsity,
  including rather sparse signals. The precision of these predictions
  is illustrated in Figure \ref{fig:simu_FDP_TPP} and in Section
  \ref{sec:discussion}. In the latter case, $n=250$, $p=1000$ and the
  number of $k$ of signals is very small, i.e.~$k = 18$.


\paragraph{Gaussian designs} Second, Gaussian designs with independent
columns are believed to be ``easy'' or favorable for model selection
due to weak correlations between distinct features. (Such designs
happen to obey restricted isometry properties \cite{CanTaoDecode05} or
restricted eigenvalue conditions \cite{bickel2009} with high
probability, which have been shown to be useful in settings sparser
than those considered in this paper.) Hence, negative results under
the working hypothesis are likely to extend more generally.

\paragraph{Regression coefficients} Third, the assumption concerning
the distribution of the regression coefficients can be slightly
weakened: all we need is that the sequence $\beta_1, \ldots, \beta_p$
has a convergent empirical distribution with bounded second moment.
We shall not pursue this generalization here.

\subsection{Main result}

Throughout the paper, $V$ (resp.~$T$) denotes the number of Lasso
false (resp.~true) discoveries while $k = |\{j: \beta_j \ne 0\}|$
denotes the number of true signals; formally,
$V(\lambda) = |\{j: \widehat\beta_j(\lambda) \ne 0 \text{ and } \beta_j =
0\}|$
whereas
$T(\lambda) = |\{j: \widehat\beta_j(\lambda) \ne 0 \text{ and } \beta_j
\ne 0\}|$.  With this, we define the FDP as usual,
\begin{equation}
\label{eq:fdp}
\fdp(\lambda) = \frac{V(\lambda)}{|\{j: \widehat\beta_j(\lambda) \neq 0\}| \vee 1}
\end{equation}
and, similarly, the TPP is defined as
\begin{equation}
\label{eq:tpp}
\power(\lambda) = \frac{T(\lambda)}{k \vee 1}
\end{equation}
(above, $a \vee b = \max \{a, b\}$). The dependency on $\lambda$ shall
often be suppressed when clear from the context.  Our main result
provides an explicit trade-off between FDP and TPP.
\begin{theorem}\label{thm:min_fdr_given_power}
  Fix $\delta \in (0, \infty)$ and $\epsilon \in (0, 1)$, and consider
  the function $\mfdr(\cdot) = \mfdr(\cdot; \delta, \epsilon) > 0$
  given in \eqref{eq:q}. Then under the working hypothesis and for any
  arbitrary small constants $\lambda_0 >0$ and $\eta > 0$, the
  following conclusions hold:
\begin{enumerate}
\item[(a)] In the {\bf noiseless case} ($\sigma = 0$), the event
\begin{equation} 
\label{eq:main}
\bigcap_{\lambda \ge \lambda_0} \Big\{ \fdp(\lambda) \ge \mfdr\left( \power(\lambda) \right) - \eta\Big\}
\end{equation}
holds with probability tending to one. (The lower bound on $\lambda$
in \eqref{eq:main} does not impede interpretability since we are not
interested in variables entering the path last.) 

\item[(b)] With {\bf noisy data} ($\sigma > 0$) the conclusion is
  exactly the same as in (a).

\item[(c)] Therefore, in both the noiseless and noisy cases, no matter
  how we choose $\widehat\lambda(\by, \bX) \ge c_1$ {\bf adaptively} by
  looking at the response $\by$ and design $\bX$, with probability
  tending to one we will  never have
  $\fdp(\widehat{\lambda}) < q^\star(\power(\widehat{\lambda})) - c_2$. 

\item[(d)] The boundary curve $q^\star$ is {\bf tight}:  any continuous
  curve $q(u) \ge q^\star(u)$ with strict inequality for some $u$ will
  fail (a) and (b) for some prior distribution $\Pi$ on the regression
  coefficients.
\end{enumerate}
\end{theorem}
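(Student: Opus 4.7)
The plan is to leverage the approximate message passing (AMP) characterization of the Lasso. For each fixed $\lambda$, AMP theory produces a pair $(\alpha(\lambda), \tau(\lambda))$ solving a two-variable fixed-point system that depends only on $(\delta, \epsilon, \sigma, \Pi, \lambda)$, and such that the joint empirical distribution of $(\widehat\beta_j(\lambda), \beta_j)$ converges weakly to the law of $(\eta_{\alpha\tau}(\Pi + \tau W), \Pi)$, where $W \sim \mathcal{N}(0,1)$ is independent of $\Pi$ and $\eta_\theta$ denotes soft-thresholding at level $\theta$. First I would transfer this to the pointwise limits
\[
\fdp(\lambda) \plim \frac{2(1-\epsilon)\Phi(-\alpha)}{2(1-\epsilon)\Phi(-\alpha) + \epsilon\, \P(|\Pi + \tau W| > \alpha\tau \mid \Pi \ne 0)}, \qquad \power(\lambda) \plim \P(|\Pi + \tau W| > \alpha\tau \mid \Pi \ne 0),
\]
and then identify $\mfdr$ as the infimum of the right-hand FDP expression over all admissible priors and AMP fixed points that yield a prescribed value of the $\power$ limit, recovering the formula \eqref{eq:q}. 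This already establishes (a) and (b) pointwise in $\lambda$.

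The bulk of the work is upgrading this to the uniform statement \eqref{eq:main}. I would proceed in three steps: (i) show continuity of $\lambda \mapsto (\alpha(\lambda), \tau(\lambda))$ on $[\lambda_0, \infty)$ via the implicit function theorem applied to the AMP fixed-point equations, so that the pointwise limits above depend continuously on $\lambda$; (ii) lay down a deterministic fine grid $\lambda_0 = \lambda^{(1)} < \cdots < \lambda^{(N)}$ and invoke AMP with a union bound at each grid point; and (iii) control the fluctuations of $V(\lambda)$ and $T(\lambda)$ between consecutive grid points by exploiting the piecewise-linearity of the Lasso path --- between two consecutive knots only one coordinate enters or leaves the active set, so $V$ and $T$ change by at most one per knot, and the total number of knots on $[\lambda_0,\infty)$ is at most $p$. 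This uniform-in-$\lambda$ upgrade is where I expect the main technical obstacle, since standard AMP is a fixed-$\lambda$ statement and does not by itself yield joint control over a continuum of Lasso solutions.

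Part (c) is then essentially immediate: any data-dependent $\widehat\lambda(\by, \bX) \ge c_1$ lies in the deterministic interval $[c_1, \infty)$ on which the uniform bound from (a)/(b) already holds with $\lambda_0 = c_1$ and $\eta = c_2$, so the event $\{\fdp(\widehat\lambda) < \mfdr(\power(\widehat\lambda)) - c_2\}$ is contained in the complement of \eqref{eq:main} and has vanishing probability.

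For the tightness claim (d), I would exhibit, for each target $u \in (0,1)$, a prior $\Pi$ --- specifically a two- or three-point distribution supported on $\{0\}$ and $\{\pm M\}$ for a suitably chosen magnitude $M$ --- together with a value of $\lambda$ whose AMP limits satisfy $\power \to u$ and $\fdp \to \mfdr(u)$. Any continuous curve $q$ with $q(u_0) > \mfdr(u_0)$ at some $u_0$ would then also exceed the realized $\fdp$ on an open neighborhood of $u_0$ for this choice of prior, contradicting the uniform lower bound of (a)/(b). The remaining calculation, which I would not grind through here, is verifying that the infimum defining $\mfdr$ in \eqref{eq:q} is attained within this two-parameter family of simple priors; this reduces to a finite-dimensional Lagrangian analysis of a Gaussian tail-probability optimization in $M$ and the AMP fixed point.
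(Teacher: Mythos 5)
Your overall architecture --- pointwise AMP characterization, then a uniform-in-$\lambda$ upgrade via a fine grid and union bound, then optimization over priors --- matches the paper's. However, your step (iii), the control of fluctuations of $V(\lambda), T(\lambda)$ between consecutive grid points via ``piecewise-linearity of the Lasso path,'' contains a genuine error and is precisely the point where the paper has to work hardest.

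You claim that because only one coordinate changes per knot, the total number of knots on $[\lambda_0,\infty)$ is at most $p$, so $V$ and $T$ cannot move much between nearby grid points. This is false: the Lasso path is not monotone. A variable can enter and leave the active set many times (the paper explicitly flags this ``irregularity''), and in the worst case the number of linear segments of the path grows exponentially in $p$, not linearly. Even if, under the random Gaussian design, the total number of knots were $O(p)$, that alone would not help --- you would additionally need to show that at most $o(p)$ of those knots fall in any fixed short interval $[\lambda_i, \lambda_{i+1}]$, i.e.\ some equidistribution-of-knots statement, and nothing in AMP gives you that. The paper instead replaces knot counting with a direct quantitative bound on the support change. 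The key intermediate result (Lemma~\ref{lm:l2_local_continu}) is that $\sup_{\lambda^-\le\lambda\le\lambda^+}\|\widehat\bbeta(\lambda)-\widehat\bbeta(\lambda^-)\|\le c\sqrt{(\lambda^+-\lambda^-)p}$ with high probability; this is combined with a KKT/subgradient argument (few coordinates have $|g_j(\lambda^-)|$ near $1$ without being selected, by Eq.~(3.21) of \cite{BM12}; few columns can be highly correlated with a fixed unit vector, Lemma~\ref{lm:weak_corre}) to bound $|\mathcal S(\lambda)\setminus\mathcal S(\lambda^-)|$ and $|\mathcal S(\lambda^-)\setminus\mathcal S(\lambda)|$ separately by $\omega p/2$. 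This $\ell_2$-continuity lemma is the missing ingredient in your proposal, and without something like it step (iii) does not go through.

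Two smaller gaps. First, you treat the noiseless case $\sigma=0$ as if AMP applied directly; it does not, and the paper devotes a separate argument that approximates the noiseless problem by ones with $\sigma\to 0$, using the non-expansiveness of the projection defining the Lasso residual together with Proposition~3.6 of \cite{BM12} to compare $\widehat\bbeta$ and $\widehat\bbeta^\sigma$. Second, for part (d) you suggest a prior supported on $\{0,\pm M\}$. A prior whose nonzero mass is all ``very strong'' only realizes the endpoint of the curve; to trace the whole boundary you need a \emph{mixture of very strong and very weak nonzero effects}, e.g.\ $\Pi\in\{M, M^{-1}, 0\}$ with $M\to\infty$, since the weak component keeps $\epsilon=\P(\Pi\ne 0)$ fixed while letting the realized TPP stay at an interior value $u$, which in turn lowers the realized FDP to $q^\star(u)$. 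Part (c) as you wrote it is correct and matches the paper.
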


 A different way to phrase the trade-off is via false
  discovery and false negative rates. Here, the FDP is a natural
  measure of type I error while $1 - \power$ (often called the false
  negative proportion) is the fraction of missed signals, a natural
  notion of type II error. In this language, our results simply say
  that nowhere on the Lasso path can both types of error rates be
  simultaneously low. 


\begin{remark}\label{rem:fixed_prior}
We would like to emphasize that the boundary is derived from a
best-case point of view. For a fixed prior $\Pi$, we also provide in
Theorem \ref{thm:instance_specific} from Appendix
\ref{sec:proof-theor-refthm:m} a trade-off curve $q^\Pi$ between TPP
and FDP, which always lies above the boundary $q^\star$. Hence, the
trade-off is of course less favorable when we deal with a specific
Lasso problem.  In fact, $q^\star$ is nothing else but the lower
envelope of all the instance-specific curves $q^\Pi$ with $\P(\Pi \neq
0) = \epsilon$.
\end{remark}

{Figure~\ref{fig:phase} presents two instances of the {\em Lasso
    trade-off diagram}, where the curve $q^\star(\cdot)$ separates the
  red region, where both type I and type II errors are small, from the
  rest (the white region). Looking at this picture, Theorem
  \ref{thm:min_fdr_given_power} says that nowhere on the Lasso path we
  will find ourselves in the red region, and that this statement
  continues to hold true even when there is no noise. Our theorem also
  says that we cannot move the boundary upward. As we shall see, we
  can come arbitrarily close to any point on the curve by specifying a
  prior $\Pi$ and a value of $\lambda$. Note that the right plot is
  vertically truncated at $0.6791$, implying that TPP cannot even
  approach 1 in the regime of $\delta = 0.3, \epsilon=0.15$. This
  upper limit is where the Donoho-Tanner phase transition occurs
  \cite{DonTan05}, see the discussion in Section \ref{sec:techn-contr}
  and Appendix~\ref{sec:optimizing-tradeoff}.}
\begin{figure}[!htp]
\centering
\includegraphics[scale=0.37]{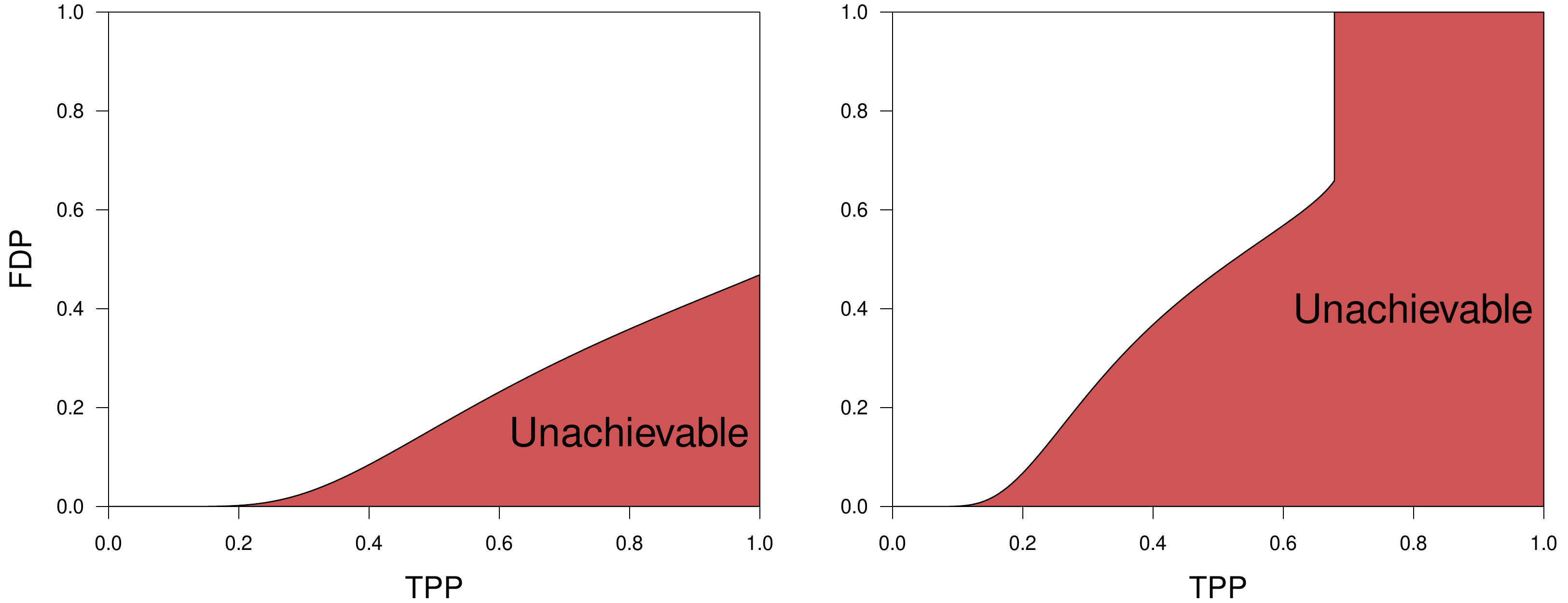}
\caption{The Lasso trade-off diagram: left is with $\delta = 0.5$ and $\epsilon = 0.15$, and right is with $\delta = 0.3$ and $\epsilon=0.15$ (the vertical truncation occurs at $0.6791$). }
\label{fig:phase}
\end{figure}

Support recovery from noiseless data is presumably the most ideal
scenario. Yet, the trade-off remains the same as seen in the first
claim of the theorem.  As explained in
Section~\ref{sec:matter-with-ell_1}, this can be understood by
considering that the root cause underlying the trade-off in both the
noiseless and noisy cases come from the pseudo-noise introduced by
shrinkage.

\subsection{The boundary curve $q^\star$}

We now turn to specify $q^\star$. For a fixed $u$, let $t^\star(u)$ be
the largest positive root\footnote{If $u = 0$, treat $+\infty$ as a
  root of the equation, and in \eqref{eq:q} conventionally set
  $0/0=0$. In the case where $\delta \ge 1$, or $\delta < 1$ and
  $\epsilon$ is no larger than a threshold determined only by
  $\delta$, the range of $u$ is the unit interval $[0, 1]$.
  Otherwise, the range of $u$ is the interval with endpoints 0 and
  some number strictly smaller than 1, see the discussion in
  Appendix~\ref{sec:optimizing-tradeoff}.} of the equation in $t$,
\[
\frac{2(1 - \epsilon)\left[ (1+t^2)\Phi(-t) - t\phi(t) \right] + \epsilon(1 + t^2) - \delta}{\epsilon\left[ (1+t^2)(1-2\Phi(-t)) + 2t\phi(t) \right]} = \frac{1 - u}{1 - 2\Phi(-t)}.
\]
Then
\begin{equation}\label{eq:q}
\mfdr(u; \delta, \epsilon) = \frac{2(1-\epsilon)\Phi(-t^\star(u))}{2(1-\epsilon)\Phi(-t^\star(u)) + \epsilon u}.
\end{equation}
It can be shown that this function is infinitely many times
differentiable over its domain, always strictly increasing, and
vanishes at $u = 0$. Matlab code to calculate $\mfdr$ is
available at \url{https://github.com/wjsu/fdrlasso}.

Figure~\ref{fig:qfunction} displays examples of the function
$q^\star$ for different values of $\epsilon$ (sparsity), and
$\delta$ (dimensionality). It can be observed that the issue of FDR
control becomes more severe when the sparsity ratio $\epsilon=k/p$
increases and the dimensionality $1/\delta = p/n$ increases. 
\begin{figure}[!htp]
\centering
\begin{subfigure}[b]{\textwidth}
\centering
\includegraphics[scale=0.39]{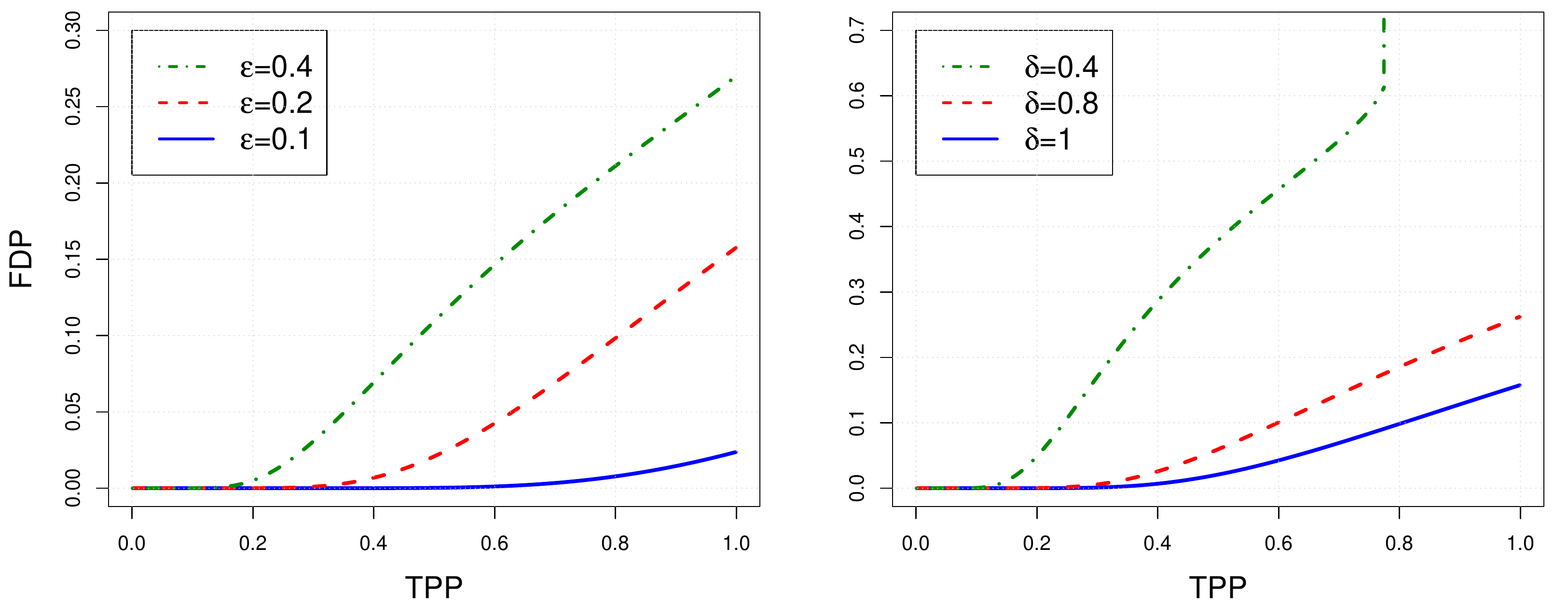}
\end{subfigure}
\begin{subfigure}[b]{\textwidth}
\centering
\includegraphics[scale=0.39]{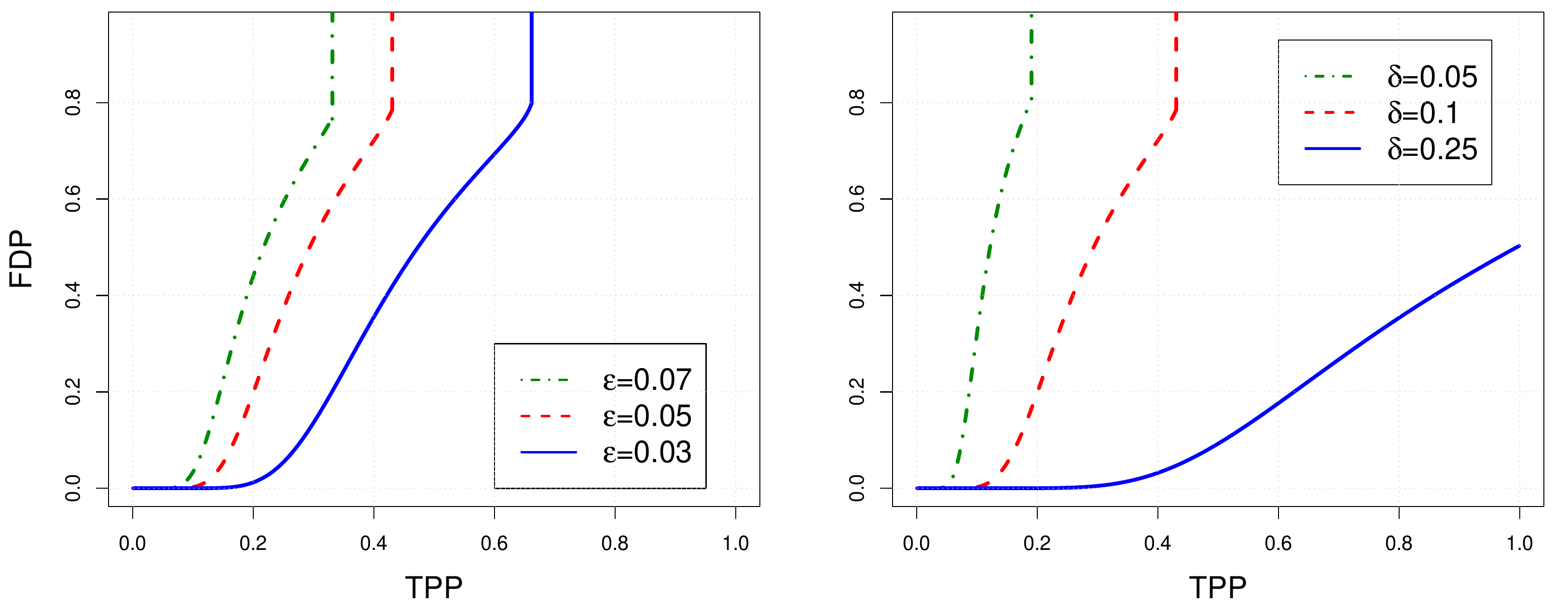}
\end{subfigure}
\caption{Top-left is with $\delta = 1$; top-right is with
  $\epsilon = 0.2$; bottom-left is with $\delta = 0.1$; and
  bottom-right is with $\epsilon = 0.05$. }
\label{fig:qfunction}
\end{figure}


\subsection{Numerical illustration}

Figure~\ref{fig:simu_FDP_TPP} provides the outcomes of numerical
simulations for finite values of $n$ and $p$ in the noiseless setup
where $\sigma=0$. For each of $n=p=1000$ and $n=p=5000$, we compute 10
independent Lasso paths and plot all pairs $(\power, \fdp)$ along the
way. In Figure~\ref{fig:simu_FDP_TPP}a we can see that when
$\power<0.8$, then the large majority of pairs $(\power, \fdp)$ along
these 10 paths are above the boundary. When $\power$ approaches one,
the average $\fdp$ becomes closer to the boundary and a fraction of
the paths fall below the line. As expected this proportion is
substantially smaller for the larger problem size.
\begin{figure}[!htp]
\centering
\begin{subfigure}[b]{0.49\textwidth}
\centering
\includegraphics[width=\textwidth]{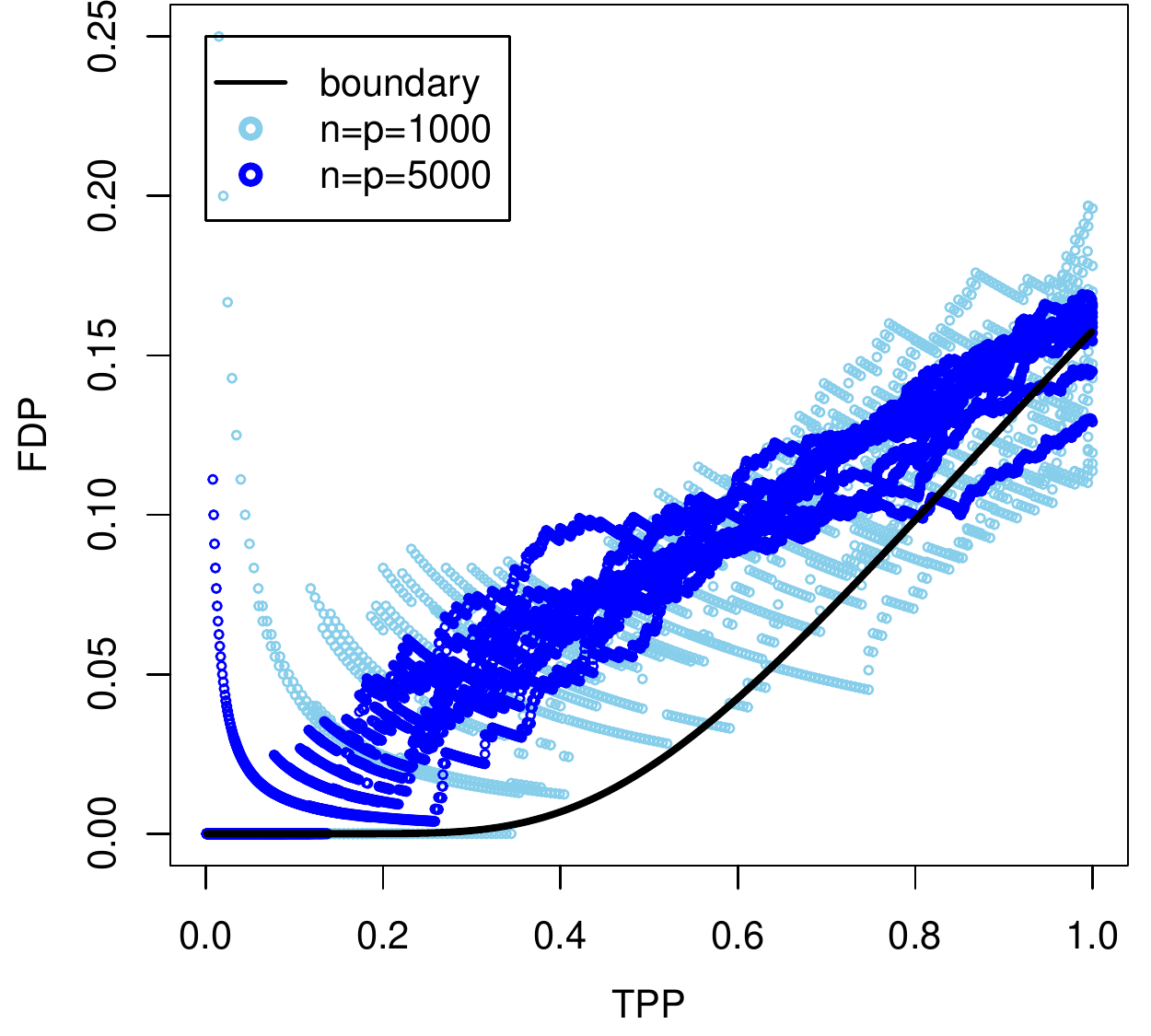}
\caption{Realized $(\power, \fdp)$ pairs}
\end{subfigure}
\hfill
\begin{subfigure}[b]{0.49\textwidth}
\centering
\includegraphics[width=\textwidth]{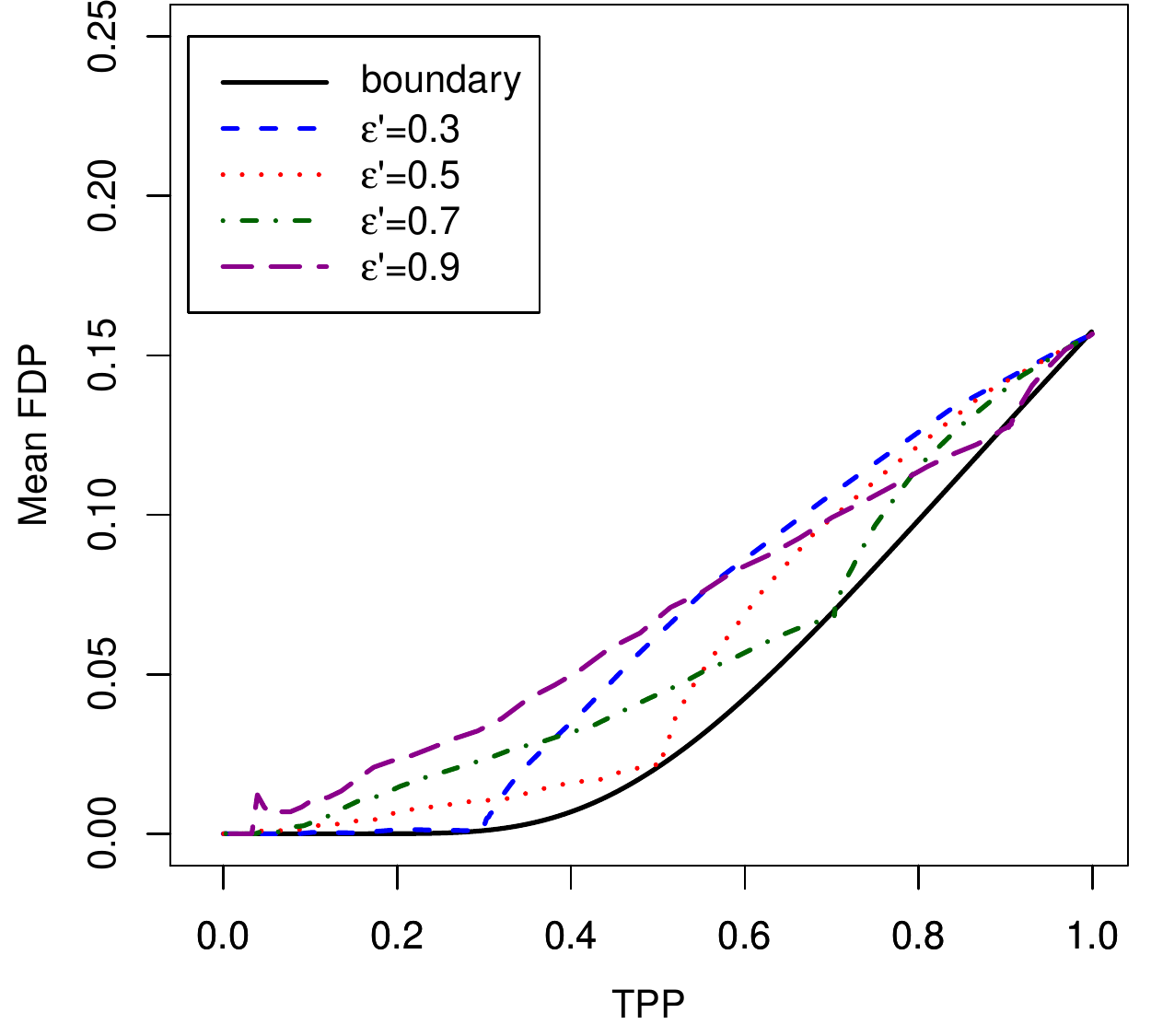}
\caption{Sharpness of the boundary}
\end{subfigure}
\caption {In both (a) and (b), $n/p = \delta = 1$, $\epsilon = 0.2$,
  and the noise level is $\sigma = 0$ (noiseless). (a) FDP vs.~TPP
  along 10 independent Lasso paths with
  $\P(\Pi=50) = 1 - \P(\Pi=0) = \epsilon$.  (b) Mean FDP vs.~mean TPP
  averaged at different values of $\lambda$ over 100 replicates for
  $n=p=1000$, $\P(\Pi = 0) = 1-\epsilon$ as before, and
  $\P(\Pi=50|\Pi \neq 0)= 1 - \P(\Pi=0.1 | \Pi \neq 0)= \epsilon'$.}
\label{fig:simu_FDP_TPP}
\end{figure}

\subsection{Sharpness}
\label{sec:sharpness}

The last conclusion from the theorem stems from the following
fact: take any point $(u, \mfdr(u))$ on the boundary curve; then we
can approach this point by fixing $\epsilon' \in (0, 1)$ and setting
the prior to be
\[
\Pi =
\begin{cases}
 M, \quad &\mbox{w.p.}~ \epsilon \cdot \epsilon',\\
 M^{-1}, &\mbox{w.p.}~ \epsilon \cdot (1-\epsilon'),\\
 0,             &\mbox{w.p.}~ 1 - \epsilon.
\end{cases}
\]
We think of $M$ as being very large so that the (nonzero) signals are
either very strong or very weak. In Appendix
\ref{sec:optimizing-tradeoff}, we prove that for any $u$ between 0 and 1 there is some fixed $\epsilon'=\epsilon'(u) > 0$ such that\footnote{In some cases $u$ should be bounded above by some constant strictly smaller than 1. See the previous footnote for details.}
\begin{equation}
\label{eq:lower}
\lim_{M \goto \infty} \,\, \lim_{n, p \goto \infty} \,\, (\power(\lambda), \fdp(\lambda)) \goto (u, \mfdr(u)),
\end{equation}
where convergence occurs in probability. This holds provided that
$\lambda \goto \infty$ in such a way that $M/\lambda \goto \infty$;
e.g.~$\lambda = \sqrt{M}$. Hence, the most favorable configuration is when
the signal is a mixture of {very}
strong and {very}
 weak
effect sizes because weak effects cannot be counted as false
positives, thus reducing the FDP.

Figure~\ref{fig:simu_FDP_TPP}b provides an illustration of
\eqref{eq:lower}. The setting is as in Figure~\ref{fig:simu_FDP_TPP}a
with $n = p = 1000$ and $\P(\Pi = 0) = 1-\epsilon$ except that, here,
conditionally on being nonzero the prior takes on the values $50$ and
$0.1$ with probability $\epsilon' \in \{0.3,0.5,0.7,0.9\}$ and
$1-\epsilon'$, respectively, so that we have a mixture of strong and
weak signals.  We observe that the true/false positive rate curve
nicely touches \textit{only} one point on the boundary depending on
the proportion $\epsilon'$ of strong signals .

\subsection{Technical novelties and comparisons with other works}
\label{sec:techn-contr}
The proof of Theorem~\ref{thm:min_fdr_given_power} is built on top of
the approximate message passing (AMP) theory developed in \cite{DMM09,
  ampdense,ampuniversality}, and requires nontrivial extensions. AMP
was originally designed as an algorithmic solution to compressive
sensing problems under random Gaussian designs. In recent years, AMP
has also found applications in robust statistics
\cite{donoho2013high,donoho2015variance}, structured principal
component analysis \cite{deshpande2014,montanari2014non}, and the
analysis of the stochastic block model \cite{deshpande2015}.  Having
said this, AMP theory is of crucial importance to us because it turns
out to be a very useful technique to rigorously study various
statistical properties of the Lasso solution whenever we employ a
\textit{fixed} value of the regularizing parameter $\lambda$
\cite{BM12,maleki2013asymptotic,mousavi2013}.

There are, however, major differences between our work and AMP
research. First and foremost, our paper is concerned with situations
where $\lambda$ is selected adaptively, i.e.~from the data; this is
clearly outside of the envelope of current AMP results. Second, we
are also concerned with situations where the noise variance can be
zero. Likewise, this is outside of current knowledge. These
differences are significant and as far as we know, our main result
cannot be seen as a straightforward extension of AMP theory. In
particular, we introduce a host of novel elements to deal, for
instance, with the \textit{irregularity} of the Lasso path. The
irregularity means that a variable can enter and leave the model
multiple times along the Lasso path \cite{efron2004least,
  tibshiraniLassopath} so that natural sequences of Lasso models are
not nested. This implies that a naive application of sandwiching
inequalities does not give the type of statements holding uniformly
over all $\lambda$'s that we are looking for.

Instead, we develop new tools to understand the ``continuity'' of the
support of $\widehat\bbeta(\lambda)$ as a function of $\lambda$. Since
the support can be characterized by the Karush-Kuhn-Tucker (KKT)
conditions, this requires establishing some sort of continuity of the
KKT conditions. Ultimately, we shall see that this comes down to
understanding the maximum distance---uniformly over $\lambda$ and
$\lambda'$---between Lasso estimates $\widehat\bbeta(\lambda)$ and
$\widehat\bbeta(\lambda')$ at close values of the regularizing
parameter. A complete statement of this important intermediate
  result is provided in Lemma \ref{lm:l2_local_continu} from Appendix
  \ref{sec:uniform-over-lambda}.

Our results can also be compared to the phase-transition curve from
\cite{DonTan05}, which was obtained under the same asymptotic regime
and describes conditions for perfect signal recovery in the noiseless
case. The solution algorithm there is the linear program, which
minimizes the $\ell_1$ norm of the fitted coefficients under equality
constraints, and corresponds to the Lasso solution in the limit of
$\lambda \goto 0$ (the end or bottom of the Lasso path).  The
conditions for perfect signal recovery by the Lasso turn out to be far
more restrictive than those related to this linear program. For
example, our FDP-TPP trade-off curves show that perfect recovery of an
infinitely large signal by Lasso is often practically impossible even
when $n\geq p$ (see Figure \ref{fig:qfunction}).  Interestingly, the
phase-transition curve also plays a role in describing the performance
of the Lasso, since it turns out that for signals dense enough not to
be recovered by the linear program, not only does the Lasso face the
problem of early false discoveries, it also hits a power limit for
arbitrary small values of $\lambda$ (see the discussion in
Appendix~\ref{sec:optimizing-tradeoff}).

Finally, we would like also to point out that some existing works have
investigated support recovery in regimes including linear sparsity
under random designs (see
e.g. \cite{wainwright2009information,reeves2013}). These interesting
results were, however, obtained by taking an information-theoretic
point of view and do not apply to computationally feasible methods
such as the Lasso.

\section{What's Wrong with Shrinkage?}
\label{sec:matter-with-ell_1}

\subsection{Performance of $\ell_0$ methods} 
\label{sec:l0}

We wrote earlier that not all methods share the same difficulties in
identifying those variables in the model.  If the signals are
sufficiently strong, some other methods, perhaps with exponential
computational cost, can achieve good model selection performance, see
e.g.~\cite{reeves2013}.  As an example, consider the simple
$\ell_0$-penalized maximum likelihood estimate,
\begin{equation}\label{eq:l0_ls}
  \widehat \bbeta_0 = \argmin_{\bb \in \R^p} \, \,\, \| \by - \bX \bb\|^2 + \lambda \,  \|\bb\|_0.
\end{equation}
Methods known as AIC, BIC and RIC (short for risk inflation criterion)
are all of this type and correspond to distinct values of the
regularizing parameter $\lambda$. It turns out that such fitting
strategies can achieve perfect separation in some cases.
\begin{theorem}\label{thm:l0}
  Under our working hypothesis, take $\epsilon < \delta$ for
  identifiability, and consider the two-point prior
\[
\Pi =
\begin{cases}
M, \quad &\mbox{w.p.}~ \epsilon,\\  
0,  \quad &\mbox{w.p.}~ 1-\epsilon.  
\end{cases}
\]
Then we can find $\lambda(M)$ such that in probability, the
discoveries of the $\ell_0$ estimator \eqref{eq:l0_ls} obey 
\[
\lim_{M \goto \infty} \, \lim_{n, p \goto \infty} \, \fdp = 0 \quad \text{and} 
\quad \lim_{M \goto \infty} \, \lim_{n, p \goto \infty} \, \power = 1.
\]
\end{theorem}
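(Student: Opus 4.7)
The plan is to show that for a suitable choice of $\lambda(M)$, the $\ell_0$ minimizer in \eqref{eq:l0_ls} recovers the true support $S_0 = \{j : \beta_j = M\}$ exactly with probability tending to one as $n, p \to \infty$. Support recovery immediately yields $\fdp = 0$ and $\power = 1$ in the inner limit for every large enough $M$, after which $\lim_{M \to \infty}$ is trivial. The identifiability hypothesis $\epsilon < \delta$ ensures $|S_0|/n \to \epsilon/\delta < 1$, so the restricted design $\bX_{S_0}$ is asymptotically well conditioned and the oracle least-squares estimator on $S_0$ is stable; this is the anchor for comparing all competing supports $S \subseteq [p]$.

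First I would evaluate the oracle objective at $S_0$: projecting $\by = \bX_{S_0} \bbeta_{S_0} + \bz$ onto the orthogonal complement of the column span of $\bX_{S_0}$ leaves only the noise component, whose squared norm concentrates around $\sigma^2(n - \epsilon p)$, so the oracle objective concentrates at $\sigma^2(n - \epsilon p) + \lambda \epsilon p$. Next, for an arbitrary candidate support $S$ I would analyze the deviation from the oracle by splitting into two contributions. \emph{(a) Missed signals.} For each omitted true signal $j \in S_0 \setminus S$, the residual sum of squares grows by at least roughly $M^2 \|(\bI - \Pi_S)\bX_j\|^2$ where $\Pi_S$ is the projection onto the span of $\bX_S$; by standard Gaussian geometry this squared norm concentrates around $1 - |S|/n$, which is bounded away from zero, so the cost per missed signal is of order $M^2$. \emph{(b) False discoveries.} Each added null column $j \in S \setminus S_0$ can reduce the residual sum of squares by at most the squared inner product of the current residual (of size $O(\sigma^2)$ after fitting the true signals) with a fresh Gaussian direction, i.e.\ by $O(\sigma^2 \log p)$ uniformly after a union bound over the $p - |S_0|$ null indices. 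Choosing $\lambda$ in the window
\[
\sigma^2 \log p \;\ll\; \lambda \;\ll\; M^2,
\]
for instance $\lambda(M)$ of order $M$ in the noiseless case or matching the RIC scaling $c\,\sigma^2 \log p$ (with $M$ large enough that $M^2$ dominates) in the noisy case, ensures that dropping any true signal is strictly too expensive and adding any null column strictly worsens the penalized objective.

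The main obstacle is turning these per-subset bounds into a bound that is \emph{uniform} over all $2^p$ candidate supports simultaneously. Since $\binom{p}{k} \leq (ep/k)^k$, uniform control requires Gaussian concentration for the extreme singular values of $\bX_T$ uniformly over $T$ with $|T| \le (1-\eta)n$, together with sub-Gaussian tail bounds for maxima of projected noise inner products; both are available from standard Wishart spectra estimates and restricted-isometry-type inequalities for i.i.d.\ Gaussian matrices, but the constants must be tracked carefully to keep the window $\sigma^2 \log p \ll \lambda \ll M^2$ non-empty for large $M$. Finally, supports with $|S|$ close to $n$, where the smallest singular value of $\bX_S$ degenerates, can be ruled out a priori by a preliminary argument showing that any competitive minimizer must have $|S| = (\epsilon + o(1))p$, which is strictly less than $n$ under $\epsilon < \delta$. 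Combining these ingredients delivers exact support recovery in probability in the inner limit for each large enough $M$, and the outer limit $M \to \infty$ concludes the proof.
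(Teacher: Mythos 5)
The proposal rests on the claim that for a suitable $\lambda(M)$ the $\ell_0$ minimizer achieves \emph{exact} support recovery in the inner limit $n, p \to \infty$, and this is where it breaks down. The window you write, $\sigma^2\log p \ll \lambda \ll M^2$, is empty for the order of limits in the theorem: the inner limit is taken with $M$ \emph{fixed}, so once $\sigma^2\log p > M^2$ (which happens for any fixed $M > 0$ and $\sigma > 0$ as soon as $p$ is large enough) there is no admissible $\lambda$. The lower constraint $\lambda \gg \sigma^2\log p$ is genuinely needed to prevent even a single null from entering, because the maximum drop in RSS attainable by a single null column is of order $\sigma^2\log p$ by extreme value considerations; but $\lambda(M)$ in the theorem does not grow with $p$. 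Conversely, if you push $\lambda$ up to $\sim \sigma^2\log p$, then for fixed $M$ the model will eventually refuse to pay the penalty for a true signal once $\log p \gtrsim M^2/\sigma^2$, and $\power$ collapses rather than tending to one. In short, with $\sigma > 0$ and $M$ fixed, exact support recovery is unattainable in the inner limit, so the strategy as stated cannot give the claimed conclusion. (The noiseless case $\sigma = 0$ is fine for your argument, but the theorem must also cover $\sigma > 0$.)

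The paper's proof avoids this trap by proving something weaker and sufficient: for every small $c > 0$, one can choose a \emph{constant} $\lambda$ (depending on $c$ but not on $n, p$) and then $M$ large enough so that with probability tending to one the minimizer has at most $2ck$ false discoveries and at least $(1-c)k$ true discoveries. Since FDP and TPP are ratios, a vanishing fraction of errors suffices. The technical engine is very different from RIP-type bounds: by conditioning on the support pattern of $\beta$ and using rotational invariance of the Gaussian design, the penalized RSS at any fixed candidate support $\mathcal{S}$ has the \emph{exact} law $(\sigma^2 + M^2(k - m_1)/n)\,\chi^2_{n-m_0-m_1} + \lambda(m_0 + m_1)$, where $m_0, m_1$ are the numbers of nulls and signals in $\mathcal{S}$. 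Comparing this to the oracle $\mathcal{S} = \mathcal{T}$ and applying $\chi^2$ upper- and lower-tail bounds, a union bound over $\binom{p}{\cdot}$ subsets is absorbed by making the per-subset exponential decay rate large (by taking $M$ large and $\lambda > 2\sigma^2$, $\lambda > \sigma^2\delta/(c\epsilon)$), with no need to track restricted singular values. If you want to salvage your decomposition, you would need to relax the goal from exact recovery to ``at most $ck$ errors,'' keep $\lambda$ a constant, and replace the per-column inner-product argument with a count over subsets that tolerates a linear-in-$k$ number of mistakes; at that point you would essentially be reproducing the paper's calculation.
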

The proof of the theorem is in Appendix
\ref{sec:proof-theor-refthm:l}.  Similar conclusions will certainly
hold for many other non-convex methods, including SCAD and MC$+$ with
properly tuned parameters \cite{fan2001variable, zhang2010mc}.

\subsection{Some heuristic explanation}


In light of Theorem \ref{thm:l0}, we pause to discuss the cause
underlying the limitations of the Lasso for variable selection, which
comes from the pseudo-noise introduced by shrinkage.  As is
well-known, the Lasso applies some form of soft-thresholding. This
means that when the regularization parameter $\lambda$ is large, the
Lasso estimates are seriously biased downwards. Another way to put
this is that the residuals still contain much of the effects
associated with the selected variables. This can be thought of as
extra noise that we may want to call {\em shrinkage noise}. Now as
many strong variables get picked up, the shrinkage noise gets inflated
and its projection along the directions of some of the null variables
may actually dwarf the signals coming from the strong regression
coefficients; this is why null variables get picked up. Although our
exposition below dramatically lacks in rigor, it nevertheless
formalizes this point in some \textit{qualitative} fashion.  It is
important to note, however, that this phenomenon occurs in the linear
sparsity regime considered in this paper so that we have sufficiently
many variables for the shrinkage noise to build up and have a fold on
other variables that becomes competitive with the signal. In contrast,
under extreme sparsity and high SNR, both type I and II errors can be
controlled at low levels, see e.g.~\cite{ji2014rate}.

For simplicity, we fix the true support $\mathcal{T}$ to be a
deterministic subset of size $\epsilon \cdot p$, each nonzero
coefficient in $\mathcal T$ taking on a constant value $M > 0$.  Also,
assume $\delta > \epsilon$. Finally, since the noiseless case $\bm z =
\bm 0$ is conceptually perhaps the most difficult, suppose $\sigma =
0$. Consider the reduced Lasso problem first:
\[
\widehat \bbeta_{\mathcal{T}}(\lambda) = \argmin_{\bm b_{\mathcal{T}} \in \R^{\epsilon p}} \,\,\half \| \by - \bX_{\mathcal{T}} \bb_{\mathcal{T}} \|^2 + \lambda \,  \|\bb_{\mathcal{T}}\|_1.
\]
This (reduced) solution $\widehat \bbeta_{\mathcal{T}}(\lambda)$ is
independent from the other columns $\bX_{\overline{\mathcal{T}}}$
(here and below $\overline{\mathcal{T}}$ is the complement of
$\mathcal{T}$). Now take $\lambda$ to be of the same magnitude as $M$ so that roughly half of the signal variables are selected.
The KKT conditions state that
\[
-\lambda \bm{1} \le  \bX_{\mathcal{T}}^\T (\by - \bX_{\mathcal{T}} \widehat\bbeta_{\mathcal{T}})  \le \lambda \bm{1},
\]
where $\bm{1}$ is the vectors of all ones.  Note that if
$|\bX_j^\T (\by - \bX_{\mathcal{T}} \widehat\bbeta_{\mathcal{T}})| \le
\lambda$
for all $j \in \overline{\mathcal T}$, then extending
$\widehat \bbeta_{\mathcal{T}}(\lambda)$ with zeros would be the
solution to the full Lasso problem---with all variables included as
potential predictors---since it would obey the KKT conditions for the
full problem. A first simple fact is this: for
  $j \in \overline{\mathcal T}$, if
\begin{equation}\label{eq:break_kkt}
|\bX_j^\T (\by - \bX_{\mathcal{T}} \widehat\bbeta_{\mathcal{T}})| >\lambda,
\end{equation}
then $\bX_j$ must be selected by the incremental Lasso with design
variables indexed by $\mathcal T \cup \{j\}$. Now we make an
assumption which is heuristically reasonable: any $j$ obeying
\eqref{eq:break_kkt} has a reasonable chance to be selected in the
full Lasso problem with the same $\lambda$ (by this, we mean with some
probability bounded away from zero). We argue in favor of this
heuristic later. 

Following our heuristic, we would need to argue that
\eqref{eq:break_kkt} holds for a number of variables in
$\overline{\mathcal{T}}$ {\em linear} in $p$.  Write
\[
\bX_{\mathcal{T}}^\T (\by - \bX_{\mathcal{T}} \widehat\bbeta_{\mathcal{T}}) = \bX_{\mathcal{T}}^\T (\bX_{\mathcal{T}} \bbeta_{\mathcal{T}} - \bX_{\mathcal{T}} \widehat\bbeta_{\mathcal{T}}) = \lambda \bg_{\mathcal{T}},
\]
where $\bg_{\mathcal{T}}$ is a subgradient of the $\ell_1$ norm at
$\widehat\bbeta_{\mathcal{T}}$. Hence, $\bbeta_{\mathcal{T}} - \widehat\bbeta_{\mathcal{T}} =  \lambda (\bX_{\mathcal{T}}^\T \bX_{\mathcal{T}})^{-1} \bg_{\mathcal{T}}$ and 
\[
\bX_{\mathcal{T}}(\bbeta_{\mathcal{T}} - \widehat\bbeta_{\mathcal{T}}) = \lambda \bX_{\mathcal{T}}(\bX_{\mathcal{T}}^\T \bX_{\mathcal{T}})^{-1} \bg_{\mathcal{T}}.
\]
Since $\delta > \epsilon$, $\bX_{\mathcal{T}}(\bX_{\mathcal{T}}^\T
\bX_{\mathcal{T}})^{-1}$ has a smallest singular value bounded away
from zero (since $\bX_{\mathcal{T}}$ is a fixed random matrix with
more rows than columns).  Now because we make about half discoveries,
the subgradient takes on the value one (in magnitude) at about
$\epsilon \cdot p/2$ times. Hence, with high probability,
\[
\|\bX_{\mathcal{T}} (\bbeta_{\mathcal{T}} - \widehat\bbeta_{\mathcal{T}})\| \ge \lambda \cdot c_0 \cdot \|\bg_{\mathcal{T}}\| \ge \lambda \cdot c_1 \cdot {p}
\]
for some constants $c_0, c_1$ depending on $\epsilon$ and $\delta$. 

Now we use the fact that $\widehat \bbeta_{\mathcal{T}}(\lambda)$ is
independent of $\bX_{\overline{\mathcal{T}}}$.  For any
$j \notin \mathcal{T}$, it follows that
\[
\bX_j^\T (\by - \bX_{\mathcal{T}} \widehat\bbeta_{\mathcal{T}}) = \bX_j^\T \bX_{\mathcal{T}} (\bbeta_{\mathcal{T}} - \widehat\bbeta_{\mathcal{T}})
\]
is conditionally normally distributed with mean zero and variance
\[
\frac{\|\bX_{\mathcal{T}} (\bbeta_{\mathcal{T}} -
  \widehat\bbeta_{\mathcal{T}})\|^2}{n} \ge \frac{ c_1 \lambda^2 p}{n}
= c_2 \cdot \lambda^2.
\]
In conclusion, the probability that
$\bX_j^\T (\by - \bX_{\mathcal{T}} \widehat\bbeta_{\mathcal{T}})$ has
absolute value larger than $\lambda$ is bounded away from 0. Since
there are $(1-\epsilon)p$ such $j$'s, their expected number is linear
in $p$.  This implies that by the time half of the true variables are
selected, we already have a non-vanishing FDP.  Note that when
$|\mathcal{T}|$ is not linear in $p$ but smaller,
e.g.~$|\mathcal{T}| \le c_0 n/\log p$ for some sufficiently small
constant $c_0$, the variance is much smaller because the estimation
error
$\|\bX_{\mathcal{T}} (\bbeta_{\mathcal{T}} -
\widehat\bbeta_{\mathcal{T}})\|^2$
is much lower, and this phenomenon does not occur.

Returning to our heuristic, we make things simpler by considering 
alternatives: (a) if very few extra variables in
$\overline{\mathcal T}$ were selected by the full Lasso, then the
value of the prediction $\bX \widehat\bbeta$ would presumably be close
to that obtained from the reduced model.
In other words, the residuals $\by - \bX \widehat\bbeta$ from the full
problem should not differ much from those in the reduced
problem. Hence, for any $j$ obeying \eqref{eq:break_kkt}, $\bX_j$
would have a high correlation with $\by - \bX \widehat\bbeta$. Thus
this correlation has a good chance to be close to $\lambda$, or
actually be equal to $\lambda$. Equivalently, $\bX_j$ would likely be
selected by the full Lasso problem. (b) If on the other hand, the
number of variables selected from $\overline{\mathcal T}$ by the full
Lasso were a sizeable proportion of $|\mathcal{T}|$, we would have
lots of false discoveries, which is our claim.

In a more rigorous way, AMP claims that under
our working hypothesis, the Lasso estimates $\widehat\beta_j(\lambda)$
are, in a certain sense, asymptotically distributed as
$\eta_{\alpha\tau}(\beta_j + \tau W_j)$ for most $j$ and $W_j$'s
independently drawn from $\mathcal{N}(0,1)$. The positive constants
$\alpha$ and $\tau$ are uniquely determined by a pair of nonlinear
equations parameterized by $\epsilon, \delta, \Pi, \sigma^2$, and
$\lambda$. Suppose as before that all the nonzero coefficients of
$\bbeta$ are large in magnitude, say they are all equal to $M$. When
about half of them appear on the path, we have that $\lambda$ is just
about equal to $M$. A consequence of the AMP equations is that $\tau$
is also of this order of magnitude. Hence, under the null we have that
$(\beta_j + \tau W_j)/M \sim \mathcal{N}(0, (\tau/M)^2)$ while under
the alternative, it is distributed as $\mathcal{N}(1, (\tau/M)^2)$.
Because, $\tau/M$ is bounded away from zero, we see that false
discoveries are bound to happen.

Variants of the Lasso and other $\ell_1$-penalized methods, including
$\ell_1$-penalized logistic regression and the Dantzig selector, also
suffer from this ``shrinkage to noise'' issue. 

\section{Discussion}
\label{sec:discussion}

We have evidenced a clear trade-off between false and true positive
rates under the assumption that the design matrix has i.i.d.~Gaussian
entries. It is likely that there would be extensions of this result to
designs with general \iid~sub-Gaussian entries as strong evidence
suggests that the AMP theory may be valid for such larger classes,
see \cite{ampuniversality}.  It might also be of interest to study the
Lasso trade-off diagram under correlated random designs.

  As we previously mentioned in the introduction, a copious
  body of literature considers the Lasso support recovery under
  Gaussian random designs, where the sparsity of the signal is often
  assumed to be \textit{sub-linear} in the ambient dimension
  $p$. Recall that if all the nonzero signal components have
  magnitudes at least $c\sigma\sqrt{2\log p}$ for some unspecified
  numerical constant $c$ (which would have to exceed one), the results
  from \cite{lassogaussian} conclude that, asymptotically, a sample
  size of $n \ge (2 + o(1)) k \log p$ is both necessary and sufficient
  for the Lasso to obtain perfect support recovery.  What does these
  results say for finite values of $n$ and $p$?
  Figure~\ref{fig:compare_dis} demonstrates the performance of the
  Lasso under a moderately large $250 \times 1000$
random Gaussian design.
Here, we consider a very sparse signal, where only $k=18$ regression
coefficients are nonzero,
$\beta_1 = \cdots = \beta_{18} = 2.5\sqrt{2 \log p} \approx 9.3$,
$\beta_{19} = \cdots = \beta_{1000} = 0$, and the noise variance is
$\sigma^2 = 1$.  Since $k = 18$ is smaller than $ {n}/{2\log p}$ and
$\beta$ is substantially larger than $\sqrt{2 \log p}$ one might
expect that Lasso would recover the signal support. However, Figure
\ref{fig:compare} (left) 
shows that this might not be the case. We see that the Lasso includes
five false discoveries before all true predictors are included, which
leads to an FDP of 21.7\% by the time the power (TPP) reaches 1.
Figure~\ref{fig:compare_dis} (right) summarizes the outcomes from 500
independent experiments, and shows that the average FDP reaches 13.4\%
when $\text{TPP}=1$.  With these dimensions, perfect recovery is not
guaranteed even in the case of `infinitely' large signals (no
noise). In this case, perfect recovery occurs in only 75\% of all
replicates and the averaged FDP at the point of full power is equal to
1.7\%, which almost perfectly agrees with the boundary FDP provided in
Theorem \ref{thm:min_fdr_given_power}. Thus, quite surprisingly, our
results obtained under a {\it linear sparsity regime} apply to sparser
regimes, and might prove useful across a wide range of sparsity
levels. 

\begin{figure}[htp!]
\centering
\includegraphics[scale=0.55]{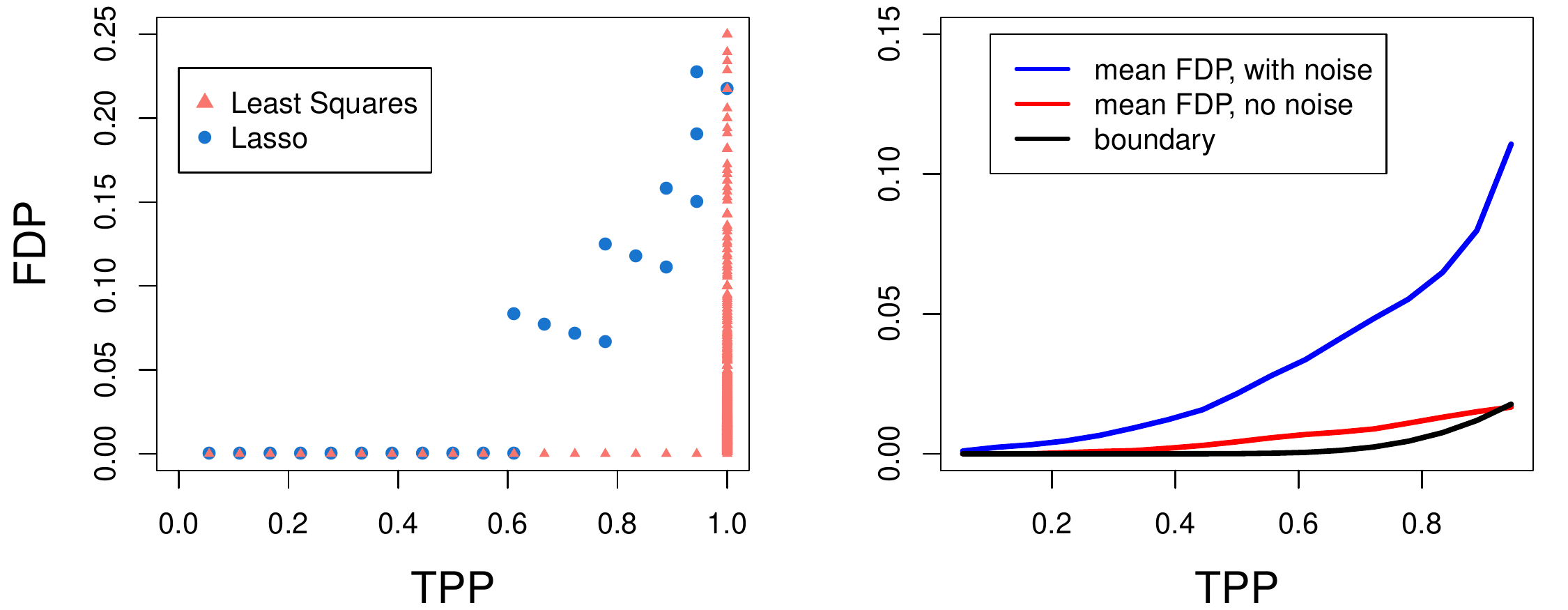}
\caption{Simulation setup: $n=250$, $p=1000$,
  $\beta_1=\cdots=\beta_{18}=2.5 \sqrt{2 \log p} \approx 9.3$ (the
  other coefficients all vanish), $\sigma^2=1$ (with noise) and
  $\sigma^2=0$ (no noise). Left: noisy case. True positive and false
  positive rates along a single realization of the Lasso path. The
  least squares path is obtained by ordering least squares estimates
  from a model including the first 50 variables selected by the
  Lasso. Right: mean FDP as a function of TPP. The mean FDP was obtained
  by averaging over 500 independent trials.}
\label{fig:compare_dis}
\end{figure}




Of concern in this paper are statistical properties regarding the
number of true and false discoveries along the Lasso path but it would
also be interesting to study perhaps finer questions such as this:
when does the first noise variable get selected? Consider
Figure~\ref{fig:T_high_snr}: there, $n = p = 1000$, $\sigma^2=1$,
$\beta_1=\cdots=\beta_k=50$ (very large SNR) and $k$ varies from 5 to
150. In the very low sparsity regime, all the signal variables are
selected before any noise variable. When the number $k$ of signals
increases we observe early false discoveries, which may occur for
values of $k$ smaller than $n/(2\log p)$. However, the average rank of
the first false discovery is substantially smaller than $k$ only after
$k$ exceeds $n/(2\log p)$. Then it keeps on decreasing as $k$
continues to increase, a phenomenon not explained by any result we are
aware of.
In the linear sparsity regime, it would be interesting to derive a
prediction for the average time of the first false entry, at least in
the noiseless case.
\begin{figure}[h!]
\centering
\includegraphics[scale=0.75]{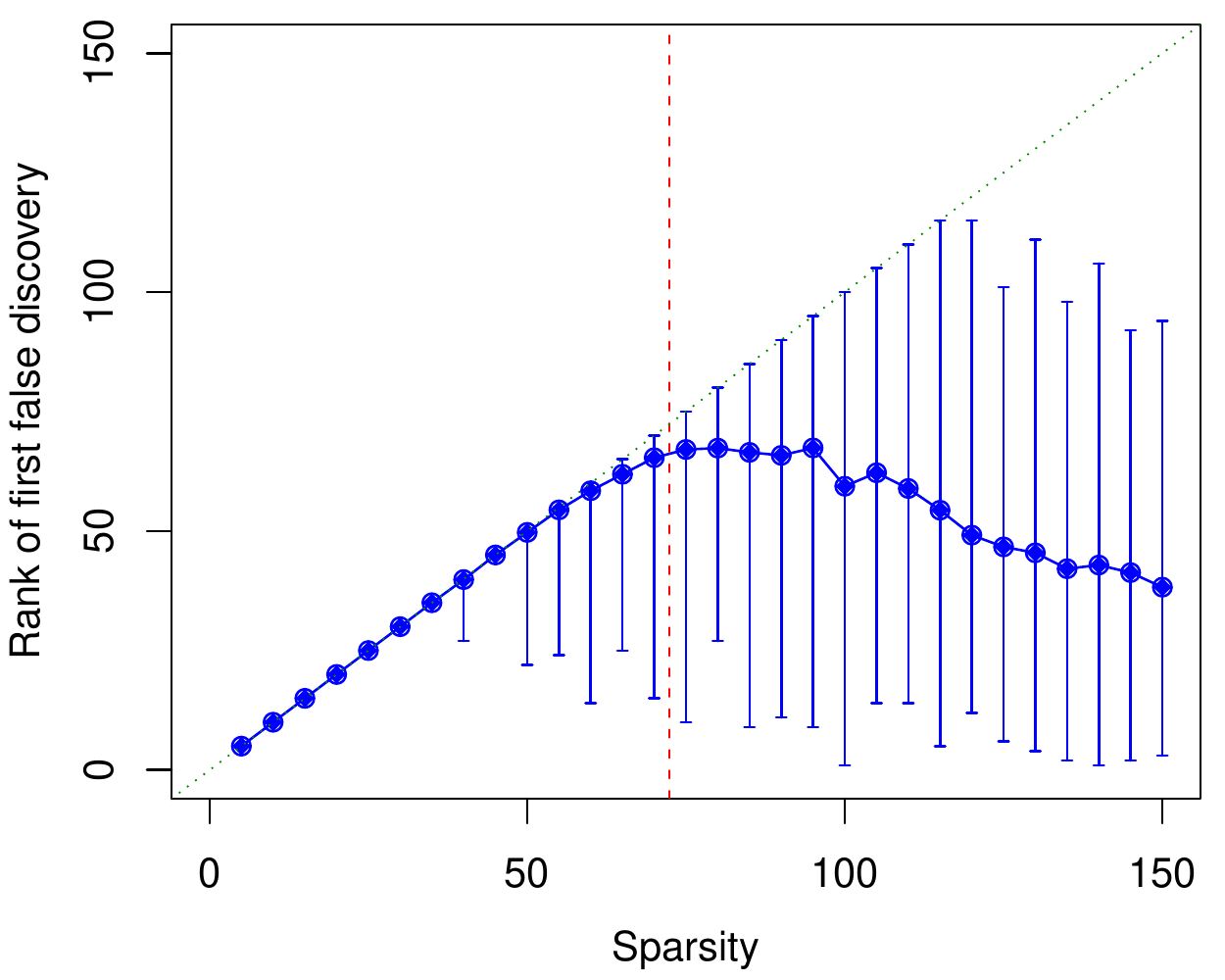}
\caption{Rank of the first false discovery.  Here, $n = p = 1000$ and
  $\beta_1=\cdots=\beta_k=50$ for $k$ ranging from 5 to 150
  ($\beta_i = 0$ for $i > k$). We plot averages from 100 independent
  replicates and display the range between minimal and maximal
  realized values. The vertical line is placed at $k = n/(2\log p)$
and the $45^\circ$ line passing through the origin is shown for
convenience.}
\label{fig:T_high_snr}
\end{figure}

Methods that are computationally efficient and also enjoy good
  model performance in the linear sparsity regime would be highly
  desirable. (The Lasso and the $\ell_0$ method each enjoys one
  property but not the other.)  While it is beyond our scope to
  address this problem, we conclude the discussion by considering
  marginal regression, a technique widely used in practice and not
  computationally demanding. A simple analysis shows that marginal
  regression suffers from the same issue as the Lasso under our
  working hypothesis. To show this, examine the noiseless case
  ($\sigma = 0$) and assume $\beta_1 = \cdots = \beta_k = M$ for some
  constant $M > 0$. It is easy to see that the marginal statistic $\bm X_j^\T \bm y$ for the $j$th variable is
  asymptotically distributed as $\mathcal{N}(0, \tilde\sigma^2)$, where $\tilde\sigma = M \sqrt{(k-1)/n}$, if $j$ is a true null and
  $\mathcal{N}(M, \tilde\sigma^2)$ otherwise. In the linear sparsity regime,
  where $k/n$ tends to a constant, the mean shift $M$ and standard
  deviation $\tilde\sigma$ have comparable magnitudes.  As a result, nulls and non-nulls are also interspersed on the marginal regression path, so that we would have either high FDR or low power.

\section*{Acknowledgements}
W.~S.~would like to thank Andrea Montanari for helpful discussions. We thank Yuxin Chen for helpful comments about an early
version of the manuscript. Most of this research was performed while
W.~S.~was at Stanford University.

\bibliographystyle{abbrv}
\bibliography{ref}

\appendix
 
  \section{Road Map to the Proofs}
\label{sec:proof-roadmap}

In this section, we provide an overview of the proof of
  Theorem \ref{thm:min_fdr_given_power}, presenting all the key steps
  and ingredients. Detailed proofs are distributed in Appendices
  \ref{sec:uniform-over-lambda}--\ref{sec:proof-theor-refthm:m}.  At a
  high level, the proof structure has the following three elements:
\begin{enumerate}
\item Characterize the Lasso solution at a fixed $\lambda$
  asymptotically, predicting the (non-random) asymptotic values of the
  FDP and of the TPP denoted by $\ffdp(\lambda)$ and
  $\fpower(\lambda)$, respectively. These limits depend depend on
  $\Pi, \delta, \epsilon$ and $\sigma$.
\item Exhibit uniform convergence over $\lambda$ in the sense that
\[
\sup_{\lambda_{\min} \le \lambda \le \lambda_{\max}} 
\left| \fdp(\lambda) - \ffdp(\lambda) \right| \plim 0, 
\]
and similarly for $\power(\lambda)$. A consequence is that in the
limit, the asymptotic trade-off between true and false positive rates
is given by the $\lambda$-parameterized curve
$(\fpower(\lambda), \ffdp(\lambda))$.

\item The trade-off curve from the last step depends on the prior
  $\Pi$. The last step optimizes it by varying $\lambda$ and $\Pi$.
\end{enumerate}
Whereas the last two steps are new and present some technical
challenges, the first step is accomplished largely by resorting to
off-the-shelf AMP theory.  We now present each step in turn.
Throughout this section we work under our working hypothesis and take
the noise level $\sigma$ to be positive.

\paragraph{Step 1} 
First, Lemma \ref{lm:amp_theory} below accurately predicts the
asymptotic limits of FDP and TPP at a fixed $\lambda$ . This lemma is
borrowed from \cite{supp}, which follows from Theorem 1.5 in
\cite{BM12} in a natural way, albeit with some effort spent in
resolving a continuity issue near the origin. Recall that
$\eta_t(\cdot)$ is the soft-thresholding operator defined as
$\eta_{t}(x) = \sgn(x)(|x| - t)_+$, and $\Pi^\star$ is the
distribution of $\Pi$ conditionally on being nonzero;
\[
\Pi = \begin{cases}
\Pi^\star, \quad & \mbox{w.p.}~ \epsilon,\\
0, & \mbox{w.p.}~ 1-\epsilon.
\end{cases}
\]
Denote by $\alpha_0$ the unique root of $(1+t^2)\Phi(-t) -t\phi(t) = \delta/2$.
\begin{lemma}[Theorem 1 in \cite{supp}; see also Theorem 1.5 in \cite{BM12}]\label{lm:amp_theory}
The Lasso solution with a fixed $\lambda > 0$ obeys
\[
\frac{V(\lambda)}{p} \plim 2(1 - \epsilon)\Phi(-\alpha), \qquad
\frac{T(\lambda)}{p} \plim \epsilon \cdot \P(|\Theta^\star + \tau W| > \alpha\tau),
\]
where $W$ is $\mathcal{N}(0,1)$ independent of $\Theta$, and $\tau > 0, \alpha > \max\{ \alpha_0, 0\}$ is the unique solution to
\begin{equation}\label{eq:amp_eqn}
\begin{aligned}
\tau^2 &= \sigma^2 + \frac{1}{\delta}\mathbb{E} \left( \eta_{\alpha\tau}(\Theta + \tau W) - \Theta \right)^2\\
\lambda &= \left(1 - \frac{1}{\delta}\P (|\Theta + \tau W| > \alpha\tau) \right) \alpha\tau.\\
\end{aligned}
\end{equation}
Note that both $\tau$ and $\alpha$ depend on $\lambda$.
\end{lemma}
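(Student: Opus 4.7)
The plan is to derive Lemma \ref{lm:amp_theory} from Theorem 1.5 of \cite{BM12}, which provides a state-evolution characterization of the Lasso solution under our working hypothesis. That theorem asserts the existence of a unique pair $(\alpha,\tau)$ solving the fixed-point equations \eqref{eq:amp_eqn} with $\alpha > \max\{\alpha_0,0\}$, and that for every pseudo-Lipschitz test function $\psi$ of order two,
\[
\frac{1}{p}\sum_{j=1}^{p}\psi\bigl(\widehat\beta_j(\lambda),\beta_j\bigr)\;\plim\;\mathbb{E}\,\psi\bigl(\eta_{\alpha\tau}(\Pi+\tau W),\Pi\bigr),
\qquad W\sim\mathcal{N}(0,1)\text{ independent of }\Pi.
\]
First I would apply this to the two empirical averages of interest: $V(\lambda)/p$ corresponds to $\psi(x,\beta)=\mathbb{1}\{x\ne 0,\beta=0\}$, and $T(\lambda)/p$ to $\psi(x,\beta)=\mathbb{1}\{x\ne 0,\beta\ne 0\}$. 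Under the AMP limit the former equals $(1-\epsilon)\,\mathbb{P}(\eta_{\alpha\tau}(\tau W)\ne 0)=2(1-\epsilon)\Phi(-\alpha)$, and the latter equals $\epsilon\,\mathbb{P}(|\Pi^\star+\tau W|>\alpha\tau)$, matching the claim.

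The obstacle is that the indicator $\mathbb{1}\{x\ne 0\}$ is not pseudo-Lipschitz (it is discontinuous precisely at the threshold $x=0$ where $\eta_{\alpha\tau}$ has a plateau), so Theorem~1.5 of \cite{BM12} does not apply directly. The plan is to sandwich this indicator between two pseudo-Lipschitz approximants. For small $\nu>0$ define
\[
\underline{\psi}_\nu(x)=\min\!\Bigl(\tfrac{(|x|-\nu)_+}{\nu},\,1\Bigr),\qquad
\overline{\psi}_\nu(x)=\min\!\Bigl(\tfrac{|x|}{\nu},\,1\Bigr),
\]
both of which are $1/\nu$-Lipschitz and satisfy $\underline{\psi}_\nu\le\mathbb{1}\{x\ne 0\}\le\overline{\psi}_\nu$. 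Applying the pseudo-Lipschitz convergence separately to $\underline{\psi}_\nu$ and $\overline{\psi}_\nu$ restricted to $\{j:\beta_j=0\}$ and $\{j:\beta_j\ne 0\}$ (using another Lipschitz cutoff in the $\beta$-argument, which is fine since the empirical distribution of $\beta_j$ converges), I obtain two-sided asymptotic bounds for $V(\lambda)/p$ and $T(\lambda)/p$ that differ only by the mass that $\eta_{\alpha\tau}(\Pi+\tau W)$ places on $(0,\nu)\cup(-\nu,0)$.

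The final step is to verify that this sandwich tightens as $\nu\downarrow 0$, i.e., that $\mathbb{P}(0<|\eta_{\alpha\tau}(\Pi+\tau W)|\le \nu)\to 0$. This is immediate from the definition of soft-thresholding: the event $\{0<|\eta_{\alpha\tau}(\Pi+\tau W)|\le\nu\}$ equals $\{\alpha\tau<|\Pi+\tau W|\le\alpha\tau+\nu\}$, which has probability bounded by $\nu$ times the supremum of the density of $\Pi+\tau W$. Since $\tau>0$ and $W$ has a bounded Gaussian density (and $\Pi$ is independent of $W$), the convolved density is bounded, and the probability is $O(\nu)$. Sending $\nu\to 0$ after the $p\to\infty$ limit therefore closes the gap and yields the stated limits for $V(\lambda)/p$ and $T(\lambda)/p$.

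The hard part is really just this continuity patch at the soft-thresholding edge; everything else is a packaging of \cite{BM12}. An alternative, cleaner implementation would invoke the KKT conditions to argue directly that the number of indices with $0<|\widehat\beta_j|\le\nu$ is negligible, using the fact that on the active set $|X_j^{\top}(y-X\widehat\beta)|=\lambda$ pins coordinates away from zero with a quantitative margin, but the Lipschitz-sandwich route seems more economical and is the one I would pursue.
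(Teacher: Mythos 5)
Your proposal is correct and follows exactly the route the paper sketches: invoke the state-evolution limit of Theorem 1.5 in \cite{BM12}, then sandwich the non-Lipschitz indicator $\mathbb{1}\{x\ne 0\}$ between Lipschitz cutoffs and close the gap by noting that $\eta_{\alpha\tau}(\Pi+\tau W)$ has no atom on a punctured neighborhood of the origin (since $\tau>0$ makes $\Pi+\tau W$ have a bounded density). This is precisely the ``continuity issue near the origin'' the paper alludes to and defers to \cite{supp}, so your argument supplies the same missing detail rather than a different one.
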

We pause to briefly discuss how Lemma~\ref{lm:amp_theory} follows from
Theorem 1.5 in \cite{BM12}. There, it is rigorously proven that the
joint distribution of $(\bbeta, \widehat\bbeta)$ is, in some sense,
asymptotically the same as that of
$(\bbeta, \eta_{\alpha\tau}(\bbeta + \tau \bm W))$, where $\bm W$ is a
$p$-dimensional vector of \iid standard normals independent of
$\bbeta$, and where the soft-thresholding operation acts in a
componentwise fashion. Roughly speaking, the Lasso estimate
$\widehat\beta_j$ looks like $\eta_{\alpha\tau}(\beta_j + \tau W_j)$,
so that we are applying soft thresholding at level $\alpha\tau$ rather
than $\lambda$ and the noise level is $\tau$ rather than $\sigma$.
With these results in place, we informally obtain 
\begin{align*}
V(\lambda)/p = \#\{j: \widehat\beta_j \ne 0, \beta_j = 0\}/p 
& \approx \P(\eta_{\alpha\tau}(\Pi + \tau W) \ne 0, \Pi = 0)\\
& = (1-\epsilon) \, \P(|\tau W| > \alpha\tau)\\
& = 2(1-\epsilon)\, \Phi(-\alpha).
\end{align*}
Similarly,
$T(\lambda)/p \approx \epsilon \, \P(|\Pi^\star + \tau W| > \alpha
\tau)$. For details, we refer the reader to Theorem 1 in \cite{supp}.

\paragraph{Step 2} 
Our interest is to extend this convergence result uniformly over a
range of $\lambda$'s. The proof of this step is the subject of
Section~\ref{sec:uniform-over-lambda}.
\begin{lemma}\label{lm:lamb_uniform}
For any fixed $0 < \lambda_{\min} < \lambda_{\max}$, the convergence
of $V(\lambda)/p$ and $T(\lambda)/p$ in Lemma~\ref{lm:amp_theory} is
uniform over $\lambda \in [\lambda_{\min}, \lambda_{\max}]$.  
\end{lemma}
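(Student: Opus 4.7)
}

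The plan is to upgrade the pointwise convergence furnished by Lemma \ref{lm:amp_theory} to uniform convergence on the compact interval $[\lambda_{\min}, \lambda_{\max}]$ by combining (i) continuity of the deterministic limit in $\lambda$, (ii) a finite-grid union bound, and (iii) a stochastic equicontinuity estimate that controls the oscillation of $V(\lambda)/p$ and $T(\lambda)/p$ between adjacent grid points. The usual shortcut---monotonicity in $\lambda$---is unavailable because the Lasso path is irregular: a variable can enter and leave the active set many times, so $V$ and $T$ are not monotone. This non-monotonicity is precisely what makes the step nontrivial.

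\paragraph{Continuity of the limit} First I would show that the solution $(\alpha(\lambda),\tau(\lambda))$ to the AMP fixed-point equations \eqref{eq:amp_eqn} depends continuously on $\lambda \in [\lambda_{\min},\lambda_{\max}]$. This follows from an implicit function / uniqueness argument: for $\sigma > 0$ the map $(\alpha,\tau) \mapsto (\tau^2 - \sigma^2 - \delta^{-1}\E(\eta_{\alpha\tau}(\Pi+\tau W)-\Pi)^2,\, \alpha\tau(1-\delta^{-1}\P(|\Pi+\tau W|>\alpha\tau)) - \lambda)$ is smooth with nondegenerate Jacobian at the unique solution (using $\alpha > \alpha_0$), so its inverse is continuous in $\lambda$. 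Consequently, the limiting functions
\begin{equation*}
v(\lambda) := 2(1-\epsilon)\Phi(-\alpha(\lambda)), \qquad t(\lambda) := \epsilon\,\P(|\Pi^\star + \tau(\lambda)W| > \alpha(\lambda)\tau(\lambda))
\end{equation*}
are continuous, hence uniformly continuous, on $[\lambda_{\min},\lambda_{\max}]$.

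\paragraph{Grid plus equicontinuity} Fix $\eta > 0$ and choose a mesh $\lambda_{\min} = \mu_1 < \mu_2 < \cdots < \mu_N = \lambda_{\max}$ fine enough that $|v(\lambda)-v(\mu_i)| < \eta/3$ and $|t(\lambda)-t(\mu_i)| < \eta/3$ whenever $\lambda \in [\mu_i,\mu_{i+1}]$. Applying Lemma \ref{lm:amp_theory} at each of the finitely many $\mu_i$ and taking a union bound, with probability tending to one
\begin{equation*}
\max_{1\le i\le N} \bigl| V(\mu_i)/p - v(\mu_i) \bigr| < \eta/3, \qquad \max_{1\le i\le N} \bigl| T(\mu_i)/p - t(\mu_i) \bigr| < \eta/3.
\end{equation*}
The remaining task is an equicontinuity bound: with high probability,
\begin{equation*}
\sup_{1\le i < N}\; \sup_{\lambda \in [\mu_i,\mu_{i+1}]}\; \frac{\bigl|\,|\supp{\widehat\bbeta(\lambda)} \,\triangle\, \supp{\widehat\bbeta(\mu_i)}|\,\bigr|}{p} < \eta/3,
\end{equation*}
which dominates both $|V(\lambda)-V(\mu_i)|/p$ and $|T(\lambda)-T(\mu_i)|/p$. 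The triangle inequality then yields the claim.

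\paragraph{The hard step} The principal obstacle is establishing this equicontinuity of the support. My plan is to invoke the $\ell_2$ stability result Lemma \ref{lm:l2_local_continu} (promised in Appendix \ref{sec:uniform-over-lambda}), which bounds $\|\widehat\bbeta(\lambda)-\widehat\bbeta(\lambda')\|$ uniformly for close $\lambda,\lambda'$, and translate it into a bound on symmetric-difference support size via the KKT conditions. The KKT stationarity reads $\bX^\T(\by - \bX\widehat\bbeta(\lambda)) \in \lambda\,\partial\|\widehat\bbeta(\lambda)\|_1$, so any coordinate that switches in or out of the support between $\lambda$ and $\lambda'$ must have its dual variable $\bX_j^\T(\by-\bX\widehat\bbeta(\lambda))/\lambda$ transit through $\pm 1$. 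Using $\ell_2$ continuity together with the AMP characterization that the empirical distribution of these dual variables stays bounded away from $\pm 1$ except on a set whose mass equals $v(\lambda)+t(\lambda)$---which is continuous in $\lambda$---one controls the number of such transitions by a quantity that tends to zero with the mesh. This KKT-based counting, fed by the $\ell_2$ continuity lemma, is the technical heart of the argument; everything else is a standard grid/union-bound packaging.
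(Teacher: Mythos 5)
Your high-level plan coincides with the paper's: finite grid, union bound, continuity of the AMP limit (the paper cites Corollary~1.7 of \cite{BM12} rather than re-running an implicit function argument, but that is a cosmetic difference), and an equicontinuity bound fed by the $\ell_2$-stability lemma (Lemma~\ref{lm:l2_local_continu}). So the scaffolding is right, and you correctly identify that path irregularity blocks any monotonicity shortcut.

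The gap is in the ``hard step,'' which you describe in one phrase as a single KKT-based counting argument. In the paper this step actually requires two distinct arguments for the two halves of the symmetric difference $|\mathcal{S}(\lambda)\setminus\mathcal{S}(\lambda^-)| + |\mathcal{S}(\lambda^-)\setminus\mathcal{S}(\lambda)|$, and your unified ``dual variable transits through $\pm 1$'' picture only fits one of them. For variables \emph{entering} the support, the paper writes
$|\bX_j^{\transp}\bu| = |\lambda g_j(\lambda)-\lambda^- g_j(\lambda^-)|/\|\bX(\widehat\bbeta(\lambda)-\widehat\bbeta(\lambda^-))\|$ and splits on whether $|g_j(\lambda^-)|$ is within $a$ of 1: the near-1 population is handled by equality~(3.21) of \cite{BM12} (the subgradient CDF has little mass just below 1), while the far-from-1 population forces $|\bX_j^{\transp}\bu|$ to exceed a threshold proportional to $\sqrt{m/n}$, and this is controlled not by $\ell_2$ stability alone but by the separate \emph{uniform weak-correlation} estimate Lemma~\ref{lm:weak_corre} (a bound on how many columns of $\bX$ can be strongly correlated with any fixed unit vector). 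Your plan does not mention anything of this second type, and $\ell_2$ continuity by itself does not give you a per-coordinate count of how many KKT dual variables jump by a constant.

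For variables \emph{leaving} the support, the argument is not about the dual variable at all: those $j$ already have $g_j(\lambda^-) = \pm1$. The paper instead looks at the primal magnitudes $|\widehat\beta_j(\lambda^-)|$, uses Theorem~1.5 of \cite{BM12} to show that only $O(\omega p)$ coordinates have $0 < |\widehat\beta_j(\lambda^-)| < \nu$ for a suitable $\nu$, and then uses $\ell_2$ stability to show that among the coordinates with $|\widehat\beta_j(\lambda^-)| \ge \nu$, only $O((\lambda^+-\lambda^-)p/\nu^2)$ can be killed by the time we reach $\lambda$. This is a genuinely different mechanism (small \emph{primal} mass near zero rather than small \emph{dual} mass near $\pm 1$), and it is missing from your sketch. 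You would also need some form of Lemma~\ref{lm:weak_corre} in hand before your KKT counting could be made rigorous for the entering direction.
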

Hence, setting
\begin{equation}
\label{eq:def}
\fd(\lambda) := 2(1-\epsilon)\Phi(-\alpha), \quad \td(\lambda) := \epsilon\P(|\Pi^\star+
\tau W| > \alpha\tau)
\end{equation}
we have 
\[
\sup_{\lambda_{\min} \le \lambda \le \lambda_{\max}} \left| \frac{V(\lambda)}{p} - \fd(\lambda) \right| \plim 0,
\]
and 
\[
\sup_{\lambda_{\min} \le \lambda \le \lambda_{\max}} \left| \frac{T(\lambda)}{p} - \td(\lambda) \right| \plim 0.
\]
To exhibit the trade-off between FDP and TPP, we can therefore focus
on the far more amenable quantities $\fd(\lambda)$ and $\td(\lambda)$
instead of $V(\lambda)$ and $T(\lambda)$.  Since
$\fdp(\lambda) = V(\lambda)/(V(\lambda) + T(\lambda))$ and
$\power(\lambda) = T(\lambda)/|\{j : \beta_j \neq 0\}|$, this gives
\[
\sup_{\lambda_{\min} \le \lambda \le \lambda_{\max}} \left|
  \fdp(\lambda) - \ffdp(\lambda) \right| \plim 0, \quad
\ffdp(\lambda)= \frac{\fd(\lambda)}{\fd(\lambda) + \td(\lambda)},
\]
and
\[
\sup_{\lambda_{\min} \le \lambda \le \lambda_{\max}} \left|
  \power(\lambda) - \fpower(\lambda) \right| \plim 0, \quad
\fpower(\lambda)= \frac{\td(\lambda)}{\epsilon}, 
\]
so that $\ffdp(\lambda)$ and $\fpower(\lambda)$ are the predicted FDP
and TPP. (We shall often hide the dependence on $\lambda$.)

\paragraph{Step 3} As remarked earlier, both $\fpower(\lambda)$ and
$\ffdp(\lambda)$ depend on $\Pi, \delta, \epsilon$ and $\sigma$. In
Appendix \ref{sec:optimizing-tradeoff}, we will see that we can
parameterize the trade-off curve $(\fpower(\lambda), \ffdp(\lambda))$
by the true positive rate so that there is a function $q^{\Pi}$
obeying $q^{\Pi}(\fpower) = \ffdp$; furthermore, this function depends
on $\Pi$ and $\sigma$ only through $\Pi/\sigma$. Therefore,
realizations of the FDP-TPP pair fall asymptotically arbitrarily close
to $q^{\Pi}$.  It remains to optimize the curve $q^{\Pi}$ over
$\Pi/\sigma$. Specifically, the last step in Appendix
\ref{sec:optimizing-tradeoff} characterizes the envelope $q^\star$
formally given as
\[
q^\star(u; \delta, \epsilon) = \inf q^{\Pi}(u; \delta, \epsilon),
\]
where the infimum is taken over all feasible priors $\Pi$. The key
ingredient in optimizing the trade-off is given by Lemma
\ref{lm:convexity}.

Taken together, these three steps sketch the basic strategy for
proving Theorem \ref{thm:min_fdr_given_power}, and the remainder of
the proof is finally carried out in Appendix
\ref{sec:proof-theor-refthm:m}. In particular, we also establish the
noiseless result ($\sigma=0$) by using a sequence of approximating
problems with noise levels approaching zero.

\section{For All Values of $\lambda$ Simultaneousy}
\label{sec:uniform-over-lambda}
In this section we aim to prove Lemma \ref{lm:lamb_uniform} and, for the moment, take $\sigma > 0$.  Also, we shall frequently use results in \cite{BM12}, notably, Theorem 1.5, Lemma 3.1, Lemma 3.2, and Proposition 3.6 therein. Having said this, most of our proofs are rather self-contained, and the strategies accessible to readers who have not yet read \cite{BM12}. We start by stating two auxiliary lemmas below, whose proofs are deferred to Section~\ref{sec:proof-auxil-lemm}.

\begin{lemma}\label{lm:weak_corre}
For any $c > 0$, there exists a constant $r_c > 0$ such that for any arbitrary $r > r_c$,
\[
\sup_{\|\bm u\| = 1}\#\left\{1 \le j \le p: |\bX_j^{\transp} \bm u| > \frac{r}{\sqrt{n}} \right\} \le c p 
\]
holds with probability tending to one. 
\end{lemma}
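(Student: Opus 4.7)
The plan is to reduce the lemma to a uniform operator-norm bound on all sufficiently large column submatrices of $\bX$. Observe that if some unit vector $\bm u$ satisfies $\#\{j : |\bX_j^\transp \bm u| > r/\sqrt n\} > cp$, then for any subset $S$ of that set with $|S| = m := \lceil cp \rceil$ we have
\[
\|\bX_S^\transp \bm u\|^2 \;>\; \frac{m r^2}{n},
\]
which forces $\sigma_{\max}(\bX_S) > r\sqrt{m/n}$. Consequently, the bad event is contained in the event that \emph{some} $m$-column submatrix of $\bX$ has operator norm exceeding $r\sqrt{m/n}$. This reduction eliminates the supremum over the continuous sphere in favor of a combinatorial supremum over $\binom{p}{m}$ submatrices.

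Next, I would invoke standard Gaussian concentration for Gordon's theorem. For a fixed $S$ with $|S| = m$, the matrix $\bX_S$ has i.i.d.\ $\mathcal{N}(0, 1/n)$ entries, so
\[
\P\!\left(\sigma_{\max}(\bX_S) \;>\; 1 + \sqrt{m/n} + s\right) \;\le\; e^{-n s^2/2}
\]
for every $s > 0$ (Davidson--Szarek combined with the Lipschitz tail bound). A union bound over the $\binom{p}{m} \le (ep/m)^m$ subsets then gives
\[
\P\!\left(\exists\, S \text{ with } |S| = m : \sigma_{\max}(\bX_S) > 1 + \sqrt{m/n} + s\right) \;\le\; \Bigl(\tfrac{ep}{m}\Bigr)^m e^{-ns^2/2}.
\]

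Finally, I would choose $s$ and $r$ to make this vanish. Using $m/p \to c$ and $n/p \to \delta$, the logarithm of the right-hand side behaves like $p\bigl(c\log(e/c) - \delta s^2/2\bigr)$, which tends to $-\infty$ as soon as $s > \sqrt{2 c \log(e/c)/\delta}$. With any such $s$ fixed, the uniform submatrix bound becomes $\sigma_{\max}(\bX_S) \le 1 + \sqrt{c/\delta} + s$ for all $|S| = m$ with probability tending to one. Matching this against the requirement $r\sqrt{m/n} \approx r\sqrt{c/\delta}$, we see that the lemma holds whenever $r$ exceeds
\[
r_c \;:=\; \sqrt{\delta/c} \;+\; 1 \;+\; \sqrt{2\log(e/c)},
\]
modulo $o(1)$ corrections that are absorbed by insisting $r > r_c$ strictly.

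The only technical obstacle is balancing the two exponential factors in the union bound: $\binom{p}{m}$ blows up as $\exp(cp\log(e/c))$, which is of the same order in $p$ as the Gaussian deviation exponent $\exp(-np\,s^2/(2p)) = \exp(-\delta p s^2/2)$. The argument works precisely because these two rates are both linear in $p$, with explicit constants that can be arranged to favor the concentration term once $r$ (equivalently, $s$) is taken large enough depending on $c$. No other subtlety arises, since the reduction to submatrix operator norms absorbs the otherwise-awkward supremum over $\bm u$.
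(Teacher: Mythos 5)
Your proof is correct, but it takes a genuinely different and more elaborate route than the paper's one-line argument. The paper simply notes that for any unit vector $\bm u$,
\[
\|\bm u^\T \bX\|^2 \;\ge\; \frac{r^2}{n}\,\#\left\{j : |\bX_j^\T\bm u| > r/\sqrt{n}\right\},
\]
and hence
\[
\#\left\{j : |\bX_j^\T\bm u| > r/\sqrt{n}\right\} \;\le\; \frac{n}{r^2}\,\|\bm u^\T\bX\|^2 \;\le\; \frac{n\,\sigma_{\max}(\bX)^2}{r^2} \;=\; (1+o_{\P}(1))\,\frac{(1+\sqrt{\delta})^2\, p}{r^2},
\]
\emph{uniformly} over all unit vectors $\bm u$, using only the concentration $\sigma_{\max}(\bX) = 1 + \delta^{-1/2} + o_{\P}(1)$ of the full matrix. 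Taking $r_c > (1+\sqrt{\delta})/\sqrt{c}$ then finishes the proof. Your reduction to $m$-column submatrices combined with Davidson--Szarek concentration and a union bound over $\binom{p}{m}$ subsets is a valid restricted-isometry-type argument, and your bookkeeping (choice of $s$, behavior of the entropy versus concentration exponents, the final value of $r_c$) is accurate. The difference is that the subset union bound pays an entropy cost, which shows up as the extra $\sqrt{2\log(e/c)}$ in your $r_c$, whereas the paper avoids any subset selection, net, or exponent-balancing: the count is already controlled by $\sum_{j=1}^p (\bX_j^\T\bm u)^2 = \|\bm u^\T\bX\|^2$ summed over \emph{all} columns, and the supremum over $\bm u$ is absorbed automatically into the definition of the operator norm. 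Both give $r_c$ of the same leading order $\sqrt{\delta/c}$ for small $c$, so nothing essential is lost, but the global-operator-norm trick is the shorter path and sidesteps the tension you correctly flagged between the two exponential rates.
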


A key ingredient in the proof of Lemma \ref{lm:lamb_uniform} is, in a certain sense, the uniform continuity of the support of $\widehat\bbeta(\lambda)$. This step is justified by the auxiliary lemma below which demonstrates that the Lasso estimates are uniformly continuous in $\ell_2$ norm.

\begin{lemma}\label{lm:l2_local_continu}
  Fixe $0 < \lambda_{\min} < \lambda_{\max}$. Then there is a constant
  $c$ such for any $\lambda^- < \lambda^+$ in
  $\left[ \lambda_{\min}, \lambda_{\max} \right]$,
\begin{equation}\nonumber
\sup_{\lambda^- \le \lambda \le \lambda^+} \left\| \widehat{\bm\beta}(\lambda) - \widehat{\bm\beta}(\lambda^-) \right\| \le c \sqrt{(\lambda^+-\lambda^-)p}
\end{equation}
holds with probability tending to one.
\end{lemma}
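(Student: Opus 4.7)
The plan is to reduce the claim to two pieces: a ``prediction'' bound that controls $\|\bX(\widehat\bbeta(\lambda) - \widehat\bbeta(\lambda^-))\|$ via optimality, and a restricted-invertibility bound that transfers this to control on $\|\widehat\bbeta(\lambda) - \widehat\bbeta(\lambda^-)\|$ itself.

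For the prediction bound, fix $\lambda \in [\lambda^-, \lambda^+]$ and set $\bm h = \widehat\bbeta(\lambda) - \widehat\bbeta(\lambda^-)$. The Lasso objective $L_\mu(\bb) = \tfrac12\|\by - \bX\bb\|^2 + \mu\|\bb\|_1$ satisfies $L_\mu(\bb) - L_\mu(\bb') \ge \tfrac12\|\bX(\bb - \bb')\|^2$ whenever $\bb'$ is a minimizer of $L_\mu$, by a quadratic Taylor expansion combined with the subgradient inequality for $\|\cdot\|_1$. Applying this at $\mu=\lambda^-$ with $\bb' = \widehat\bbeta(\lambda^-)$ and at $\mu=\lambda$ with $\bb' = \widehat\bbeta(\lambda)$, then adding the two inequalities, the quadratic pieces cancel and the $\ell_1$ terms telescope to give
\[
\|\bX \bm h\|^2 \le (\lambda - \lambda^-)\bigl(\|\widehat\bbeta(\lambda^-)\|_1 - \|\widehat\bbeta(\lambda)\|_1\bigr) \le (\lambda^+ - \lambda^-)\|\widehat\bbeta(\lambda^-)\|_1.
\]
A direct comparison with $\bb = \bm 0$ gives $\lambda^-\|\widehat\bbeta(\lambda^-)\|_1 \le \tfrac12\|\by\|^2$, and since $\|\by\| = O(\sqrt p)$ with probability tending to one (because both $\|\bX\bbeta\|$ and $\|\bz\|$ are $O(\sqrt p)$), this yields $\|\bX \bm h\|^2 \le C_1(\lambda^+ - \lambda^-)p$ simultaneously for all $\lambda \in [\lambda^-, \lambda^+]$ on a single high-probability event (the bound depending on $\lambda_{\min}$ only).

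To transfer $\|\bX \bm h\|$ to $\|\bm h\|$, observe that $\bm h$ is supported on $S(\lambda) \cup S(\lambda^-)$, the union of the two Lasso active sets. It therefore suffices to lower-bound $\sigma_{\min}(\bX_{S(\lambda) \cup S(\lambda^-)})$ uniformly in $\lambda$. For a Gaussian design with entries $\mathcal N(0,1/n)$, standard random-matrix estimates---which also follow from Lemma \ref{lm:weak_corre} via a covering/duality argument ruling out any fixed unit vector from loading heavily on many columns---imply $\sigma_{\min}(\bX_T) \ge 1 - \sqrt{|T|/n} - \eta$ simultaneously over all column subsets $T$ with $|T| \le \kappa n$, for any prescribed $\kappa < 1$ and $\eta>0$, with probability tending to one. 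The remaining task is to show that $|S(\lambda) \cup S(\lambda^-)|/n$ stays bounded away from $1$ uniformly in $\lambda \in [\lambda_{\min}, \lambda_{\max}]$.

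This uniform support-size control is the main obstacle. Pointwise, Lemma \ref{lm:amp_theory} guarantees $|S(\lambda)|/p \to \fd(\lambda) + \td(\lambda)$, and the AMP fixed-point equation \eqref{eq:amp_eqn} forces this limit to be strictly less than $\delta$ for every $\lambda > 0$; by compactness and continuity of $\lambda \mapsto \fd(\lambda)+\td(\lambda)$, these limits are already uniformly bounded away from $\delta$. The difficulty is that we cannot appeal to Lemma \ref{lm:lamb_uniform} for the uniformization in $\lambda$, since that is the very statement the present lemma is meant to enable. The resolution I would pursue is to discretize $[\lambda_{\min},\lambda_{\max}]$ on a fine grid, apply Lemma \ref{lm:amp_theory} at each grid point, and then control the oscillation of $|S(\cdot)|$ on subintervals directly from the piecewise-linear structure of the Lasso path---each subinterval contains only a bounded (in probability) number of knots per unit length, which limits how much $|S(\lambda)|$ can inflate between consecutive grid points. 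Once this uniform support bound is established, the two pieces combine to give $\|\bm h\|^2 \le C_2(\lambda^+-\lambda^-)p$ uniformly in $\lambda \in [\lambda^-,\lambda^+]$, as desired.
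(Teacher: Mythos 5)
Your step 1 (the prediction-error bound) is correct and clean: the two cross-comparisons $L_{\lambda^-}(\widehat\bbeta(\lambda)) - L_{\lambda^-}(\widehat\bbeta(\lambda^-)) \ge \tfrac12\|\bX\bm h\|^2$ and $L_{\lambda}(\widehat\bbeta(\lambda^-)) - L_{\lambda}(\widehat\bbeta(\lambda)) \ge \tfrac12\|\bX\bm h\|^2$ do telescope to $\|\bX\bm h\|^2 \le (\lambda-\lambda^-)\bigl(\|\widehat\bbeta(\lambda^-)\|_1-\|\widehat\bbeta(\lambda)\|_1\bigr)$, and with $\|\widehat\bbeta(\lambda^-)\|_1 \le \|\by\|^2/(2\lambda_{\min}) = O_{\P}(p)$ this gives the claimed $O_{\P}\bigl((\lambda^+-\lambda^-)p\bigr)$ bound. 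This is a tidy self-contained substitute for the KKT-residual verification the paper does when checking the hypotheses of Lemma 3.1 of \cite{BM12}. The gap is entirely in step 2, the transfer from $\|\bX\bm h\|$ to $\|\bm h\|$ via $\sigma_{\min}\bigl(\bX_{\mathcal S(\lambda)\cup\mathcal S(\lambda^-)}\bigr)$. This fails for two separate reasons. First, $|\mathcal S(\lambda)\cup\mathcal S(\lambda^-)|$ can be as large as $2\min(n,p)$, and in particular can exceed $n$ (the AMP prediction has $|\mathcal S(\lambda)|/n \to 1$ as $\lambda\to 0$ when $\delta\le 1$, so two not-too-close values of $\lambda$ can easily push the union past $n$); once the column subset has more than $n$ elements, $\sigma_{\min}$ is identically zero and the argument collapses. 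Second, and more fundamentally, showing that $\sup_{\lambda\in[\lambda_{\min},\lambda_{\max}]}|\mathcal S(\lambda)|/n$ stays safely below a critical threshold is precisely the uniform-over-$\lambda$ statement that Lemma~\ref{lm:lamb_uniform} provides, and the present lemma is an ingredient in that proof --- so this route is circular. Your proposed repair (a fine grid plus a bound on the local density of knots along the Lasso path) is not a known fact: in the linear-sparsity regime the path has order $p$ knots and nothing rules out their concentration in a short $\lambda$-interval, so this patch would itself need a substantial new argument.

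The paper avoids the whole issue by delegating the estimation-error transfer to Lemma 3.1 of \cite{BM12}, which is specifically designed to avoid a restricted-eigenvalue condition on the unknown (and possibly too-large) active set of the perturbed solution. It instead conditions on the near-active set of the \emph{reference} solution, $T = \{j : |g_j(\lambda^-)| \ge 1-c_2\}$, whose size is controlled by AMP at the single fixed $\lambda^-$, and requires only $\sigma_{\min}(\bX_{T\cup T'})\ge c_4$ for all small perturbing sets $T'$ --- this is what Proposition 3.6 of \cite{BM12} provides. Pinning the restricted-eigenvalue set to $\lambda^-$ rather than to the roaming $\mathcal S(\lambda)$ is the device that breaks the circularity you ran into, and also removes the danger of the support union exceeding $n$. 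Your step 1 is a nice observation, but step 2 needs to be replaced with this (or an equivalent) decomposition argument.
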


\begin{proof}[Proof of Lemma~\ref{lm:lamb_uniform}]
  We prove the uniform convergence of $V(\lambda)/p$ and similar
  arguments apply to $T(\lambda)/p$. To begin with, let
  $\lambda_{\min} = \lambda_0 < \lambda_1 < \cdots < \lambda_m =
  \lambda_{\max}$
  be equally spaced points and set
  $\Delta := \lambda_{i+1} - \lambda_i = (\lambda_{\max} -
  \lambda_{\min})/m$;
  the number of knots $m$ shall be specified later. It follows from 
  Lemma~\ref{lm:amp_theory} that 
\begin{equation}\label{eq:m_union}
\max_{0 \le i \le m} |V(\lambda_i)/p - \fd(\lambda_i)| \plim 0
\end{equation}
by a union bound.  Now, according to Corollary 1.7 from \cite{BM12},
the solution $\alpha$ to equation \eqref{eq:amp_eqn} is continuous in
$\lambda$ and, therefore, $\fd(\lambda)$ is also continuous on
$[\lambda_{\min}, \lambda_{\max}]$ . Thus, for any constant
$\omega > 0$, the equation
\begin{equation}\label{eq:rho_contin}
|\fd(\lambda) - \fd(\lambda')| \le \omega
\end{equation}
holds for all
$\lambda_{\min} \le \lambda, \lambda' \le \lambda_{\max}$ satisfying
$|\lambda - \lambda'| \le 1/m$ provided  $m$ is sufficiently
large. We now aim to show that if $m$ is
sufficiently large (but fixed), then
\begin{equation}\label{eq:support_const}
\max_{0 \le i < m} \sup_{\lambda_i \le \lambda \le \lambda_{i+1}} |V(\lambda)/p - V(\lambda_{i})/p| \le \omega
\end{equation}
holds with probability approaching one as $p \goto \infty$.  Since
$\omega$ is arbitrary small, combining \eqref{eq:m_union},
\eqref{eq:rho_contin}, and \eqref{eq:support_const} gives uniform
convergence by applying the triangle inequality.

Let $\mathcal{S}(\lambda)$ be a short-hand for
$\supp{\widehat\bbeta(\lambda)}$. Fix $0 \le i < m$ and put
$\lambda^- = \lambda_i$ and $\lambda^+ = \lambda_{i+1}$. For any
$\lambda \in [\lambda^-, \lambda^+]$, 
\begin{equation}\label{eq:delta_set}
|V(\lambda) - V(\lambda^-)| \le |\mathcal{S}(\lambda)\setminus \mathcal{S}(\lambda^-)| + |\mathcal{S}(\lambda^-)\setminus \mathcal{S}(\lambda)|.
\end{equation}
Hence, it suffices to give upper bound about the sizes of
$\mathcal{S}(\lambda)\setminus \mathcal{S}(\lambda^-)$ and
$\mathcal{S}(\lambda^-)\setminus \mathcal{S}(\lambda)$. We start with $|\mathcal{S}(\lambda)\setminus \mathcal{S}(\lambda^-)|$. 

The KKT optimality conditions for the Lasso solution state that there
exists a subgradient
$\bg(\lambda) \in \partial \|\widehat\bbeta(\lambda)\|_1$ obeying
\[
\bX^{\transp}(\by - \bX\widehat\bbeta(\lambda)) = \lambda \bg(\lambda)
\]
for each $\lambda$. Note that $g_j(\lambda) = \pm 1$ if $j \in \mathcal{S}(\lambda)$. As mentioned earlier, our strategy is to establish some sort of continuity of the KKT conditions with respect to $\lambda$. To this end, let
\[
\bm u = \frac{\bm X \left( \widehat\bbeta(\lambda) - \widehat\bbeta(\lambda^-) \right)}{\left\| \bX \left( \widehat\bbeta(\lambda) - \widehat\bbeta(\lambda^-) \right) \right\|}
\]
be a point in $\R^n$ with unit $\ell_2$ norm. Then for each $j \in \mathcal{S}(\lambda)\setminus \mathcal{S}(\lambda^-)$, we have
\[
|\bX_j^{\transp} \bm u |= \frac{\left| \bX_j^{\transp} \bX(\widehat\bbeta(\lambda) - \widehat\bbeta(\lambda^-)) \right|}{\|\bX (\widehat\bbeta(\lambda) - \widehat\bbeta(\lambda^-))\|} = \frac{|\lambda g_j(\lambda) - \lambda^- g_j(\lambda^-)|}{\| \bX (\widehat\bbeta(\lambda) - \widehat\bbeta(\lambda^-))\|} \ge \frac{\lambda - \lambda^- |g_j(\lambda^-)|}{\| \bX (\widehat\bbeta(\lambda) - \widehat\bbeta(\lambda^-))\|}.
\]
Now, given an arbitrary constant $a > 0$ to be determined later,
either $|g_j(\lambda^-)| \in [1-a, 1)$ or
$|g_j(\lambda^-)| \in [0, 1-a)$. In the first case ((a) below) note
that we exclude $|g_j(\lambda^-)| = 1$ because for random designs,
when $j \notin \mathcal{S}(\lambda)$ the equality
$|\bX_j^\T (\by - \bX \widehat\bbeta(\lambda^-))| = \lambda^-$ can
only hold with zero probability (see
e.g.~\cite{tibshirani2013Lasso}). Hence, at least one of the following
statements hold:
\begin{itemize}
\item[(a)] $|\bX^{\transp}_j(\by - \bX\widehat\bbeta(\lambda^-))| = \lambda^-|g_j(\lambda^-)| \in \left[ (1 - a)\lambda^-, \lambda^- \right)$;
\item[(b)] 
$|\bX_j^{\transp} \bm u |  \ge \frac{\lambda - (1-a)\lambda^- }{\| \bX (\widehat\bbeta(\lambda) - \widehat\bbeta(\lambda^-))\|} > \frac{ a\lambda^- }{\| \bX (\widehat\bbeta(\lambda) - \widehat\bbeta(\lambda^-))\|}$.
\end{itemize}
In the second case,
since the spectral norm $\sigma_{\max}(\bX)$ is bounded in probability
(see e.g.~\cite{vershynin}), we make use of Lemma
\ref{lm:l2_local_continu} to conclude that
\[
\frac{ a\lambda^- }{\| \bX (\widehat\bbeta(\lambda) - \widehat\bbeta(\lambda^-))\|} \ge \frac{ a\lambda^- }{\sigma_{\max}(\bX) \|\widehat\bbeta(\lambda) - \widehat\bbeta(\lambda^-) \|} \ge \frac{ a\lambda^- }{c \sigma_{\max}(\bX)\sqrt{(\lambda^+ - \lambda^-)p} } \ge c'a\sqrt{\frac{m}{n}}
\]
holds for all $\lambda^- \le \lambda \le \lambda^+$ with probability tending to one. Above, the constant $c'$ only depends on $\lambda_{\min}, \lambda_{\max}, \delta$ and $\Theta$. Consequently, we see that
\begin{multline}\nonumber
\sup_{\lambda^- \le \lambda \le \lambda^+}\left| \mathcal{S}(\lambda) \setminus \mathcal{S}(\lambda^-) \right| \le \#\left\{j: (1-a)\lambda^- \le |\bX_j^{\transp}(\by - \bX\widehat\bbeta(\lambda^-))| < \lambda^- \right\} \\
+ \#\left\{ j: |\bX^{\transp}_j \bm u| > c'a\sqrt{m/n} \right\}.
\end{multline}
Equality (3.21) of \cite{BM12} guarantees the existence of a constant $a$ such that the event\footnote{Apply Theorem 1.8 to carry over the results for AMP iterates to Lasso solution.}
\begin{equation}\label{eq:apply_continu_event1}
 \#\left\{j: (1-a)\lambda^- \le |\bX_j^{\transp}(\by - \bX\widehat\bbeta(\lambda^-))| < \lambda^- \right\} \le \frac{\omega p}{4}
\end{equation}
happens with probability approaching one. Since
$\lambda^- = \lambda_i$ is always in the interval $[\lambda_{\min},
\lambda_{\max}]$, the constant
$a$
can be made to be independent of the index $i$.
For the second term, it follows from Lemma \ref{lm:weak_corre} that
for sufficiently large $m$, the event
\begin{equation}\label{eq:apply_weak_event2}
\#\left\{ j: |\bX^{\transp}_j \bm u| > c'a\sqrt{m/n} \right\} \le \frac{\omega p}{4}
\end{equation}
also holds with probability approaching one. Combining
\eqref{eq:apply_continu_event1} and \eqref{eq:apply_weak_event2}, we
get
\begin{equation}\label{eq:support_continu}
\sup_{\lambda^- \le \lambda \le \lambda^+}\left| \mathcal{S}(\lambda) \setminus \mathcal{S}(\lambda^-) \right| \le \frac{\omega p}{2}
\end{equation}
holds with probability tending to one. 

Next, we bound
$| \mathcal{S}(\lambda^-) \setminus \mathcal{S}(\lambda)|$. Applying
Theorem 1.5 in \cite{BM12}, we can find a constant $\nu > 0$
independent of $\lambda^- \in [\lambda_{\min}, \lambda_{\max}]$ such
that
\begin{equation}\label{eq:small_beta_even}
\# \left\{ j: 0 < |\widehat\beta_j(\lambda^-)| < \nu \right\} \le \frac{\omega p}{4}
\end{equation}
happens with probability approaching one. Furthermore, the simple inequality
\[
\|\widehat\bbeta(\lambda) - \widehat\bbeta(\lambda^-)\|^2 \ge \nu^2 \# \left\{ j: j\in \mathcal{S}(\lambda^-) \setminus \mathcal{S}(\lambda), |\widehat\beta_j(\lambda^-)| \ge \nu \right\},
\]
together with Lemma \ref{lm:l2_local_continu},
give 
\begin{equation}\label{eq:small_beta_big}
\# \left\{ j: j\in \mathcal{S}(\lambda^-) \setminus \mathcal{S}(\lambda), |\widehat\beta_j(\lambda^-)| \ge \nu \right\} \le \frac{\|\widehat\bbeta(\lambda) - \widehat\bbeta(\lambda^-)\|^2}{\nu^2} \le \frac{c^2(\lambda^+-\lambda^-) p}{\nu^2}
\end{equation}
for all $\lambda \in [\lambda_{\min}, \lambda_{\max}]$ with
probability converging to one. Taking $m$ sufficiently large such that
$\lambda^+ - \lambda^- = (\lambda_{\max} - \lambda_{\min})/m \le
\omega \nu^2/4c^2$
in \eqref{eq:small_beta_big} and combining this with
\eqref{eq:small_beta_even} gives that 
\begin{equation}\label{eq:support_continu2}
\sup_{\lambda^- \le \lambda \le \lambda^+}\left| \mathcal{S}(\lambda^-) \setminus \mathcal{S}(\lambda) \right| \le \frac{\omega p}{2}
\end{equation}
holds with probability tending to one.

To conclude the proof, note that both \eqref{eq:support_continu} and
\eqref{eq:support_continu2} hold for a large but fixed
$m$. Substituting these two inequalities into \eqref{eq:delta_set}
confirms \eqref{eq:support_const} by taking a union bound. 

As far as the true discovery number $T(\lambda)$ is concerned, all the
arguments seamlessly apply and we do not repeat them. This terminates
the proof.
\end{proof}


\subsection{Proofs of auxiliary lemmas}
\label{sec:proof-auxil-lemm}
In this section, we prove Lemmas~\ref{lm:weak_corre} and
\ref{lm:l2_local_continu}. While the proof of the first is
straightforward, the second crucially relies on
Lemma~\ref{lm:uniform_l2norm}, whose proof makes use of Lemmas~\ref{lm:reverse_cauchy} and \ref{lm:small_sing_ld}. Hereafter,
we denote by $o_{\P}(1)$ any random variable which tends to zero in
probability.

\begin{proof}[Proof of Lemma~\ref{lm:weak_corre}]
Since
\[
\|\bm u^\T \bX\| ^2 \ge \frac{r^2}{n}\#\left\{1 \le j \le p: |\bX_j^{\transp} \bm u| > \frac{r}{\sqrt{n}} \right\},
\]
we have 
\[
\begin{aligned}
\#\left\{1 \le j \le p: |\bX_j^{\transp} \bm u| > \frac{r}{\sqrt{n}} \right\} &\le \frac{n}{r^2}\|\bm u^\T \bX\|^2  \le \frac{n\sigma_{\max}(\bX)^2 \|\bm u\|^2}{r^2} \\ & = (1+o_{\P}(1))\frac{(1 + \sqrt{\delta})^2 p}{r^2},
\end{aligned}
\]
where we make use of $\lim n/p = \delta$ and
$\sigma_{\max}(\bX) = 1 + \delta^{-1/2} + o_{\P}(1)$. To complete the
proof, take any $r_c > 0$ such that
$(1 + \sqrt{\delta})/r_c < \sqrt{c}$.

\end{proof}


\begin{lemma}\label{lm:reverse_cauchy}
  Take a sequence $a_1 \ge a_2 \ge \cdots \ge a_p \ge 0$ with at least
  one strict inequality, and suppose that
\begin{equation}\nonumber
\frac{p\sum_{i=1}^pa_i^2}{\left(\sum_{i=1}^p a_i \right)^2} \ge M
\end{equation}
for some $M > 1$. Then for any $1 \le s \le p$,
\[
\frac{\sum_{i=1}^{s}a_i^2}{\sum_{i=1}^p a_i^2} \ge 1 - \frac{p^3}{M s^3}.
\]
\end{lemma}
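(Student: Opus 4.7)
The strategy is to control the tail $\sum_{i=s+1}^p a_i^2$ using two basic facts: first, monotonicity of the sequence, which converts the tail into a bound in terms of $a_s$; and second, the hypothesis, which we use in the form $(\sum a_i)^2 \le p \sum a_i^2 / M$, to translate a bound on a prefix average into a bound involving $\sum a_i^2$ itself. Let $S_1 = \sum_{i=1}^p a_i$ and $S_2 = \sum_{i=1}^p a_i^2$.

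The first step is to note that since the sequence is nonincreasing and nonnegative, $a_i \le a_s$ for all $i > s$, hence
\[
\sum_{i=s+1}^p a_i^2 \;\le\; (p-s)\, a_s^2 \;\le\; p\, a_s^2.
\]
The second step bounds $a_s$: because $a_1 \ge \cdots \ge a_s$, we have $s\, a_s \le \sum_{i=1}^s a_i \le S_1$, so $a_s \le S_1/s$. Plugging this in gives $\sum_{i=s+1}^p a_i^2 \le p\, S_1^2 / s^2$.

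The third step applies the hypothesis $p S_2 \ge M S_1^2$, i.e.\ $S_1^2 \le p S_2 / M$, to obtain
\[
\sum_{i=s+1}^p a_i^2 \;\le\; \frac{p^2\, S_2}{M s^2} \;\le\; \frac{p^3\, S_2}{M s^3},
\]
where the last inequality uses $s \le p$. Subtracting from $S_2$ and dividing yields $\sum_{i=1}^s a_i^2 / S_2 \ge 1 - p^3/(M s^3)$, as claimed. There is no real obstacle here: the lemma is a short sorting argument, and in fact the bound obtained is a little stronger than the one stated (the factor $p^3/s^3$ can be replaced by $p^2/s^2$), but the stated form is what is needed later.
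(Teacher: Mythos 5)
Your proof is correct, and it takes a cleaner route than the paper's. Both arguments hinge on the same two facts---monotonicity lets you bound $a_s$ by an average of earlier terms, and the hypothesis $p S_2 \ge M S_1^2$ lets you trade $S_1^2$ for $S_2$---but the paper passes through an intermediate ratio inequality $\frac{p\sum_{i=1}^s a_i^2}{(\sum_{i=1}^s a_i)^2} \ge \frac{sM}{p}$ (obtained by comparing the prefix average of squares to the global average and then enlarging the denominator), bounds the tail by $(p-s)\bigl(\frac{1}{s}\sum_{i=1}^s a_i\bigr)^2$, and combines the two via monotonicity of $x\mapsto x/(x+c)$. You instead bound the tail directly via $a_s \le S_1/s$ and plug the hypothesis in once, which avoids the prefix ratio step entirely. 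The payoff is that your argument delivers $1 - p^2/(Ms^2)$, which is genuinely stronger than the stated $1 - p^3/(Ms^3)$ (and useful over a wider range of $s$, namely $s > p/\sqrt{M}$ rather than $s > p/M^{1/3}$). The paper's slightly more elaborate route also yields $1 - \frac{p^2(p-s)}{Ms^3}$ before the final loosening to $p^3/(Ms^3)$, but this is neither uniformly stronger nor weaker than your bound; in any case the weaker displayed form suffices for Lemma~\ref{lm:uniform_l2norm} where it is invoked with $s = \lceil p/100\rceil$.
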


\begin{proof}[Proof of Lemma \ref{lm:reverse_cauchy}]
By the monotonicity of $\bm a$, 
\[
\frac{\sum_{i=1}^s a_i^2}{s} \ge \frac{\sum_{i=1}^p a_i^2}{p},
\]
which implies
\begin{equation}\label{eq:sq_ratio}
\frac{p\sum_{i=1}^s a_i^2}{\left(\sum_{i=1}^s a_i \right)^2} \ge \frac{s\sum_{i=1}^p a_i^2}{\left(\sum_{i=1}^p a_i \right)^2} \ge \frac{s M}{p}.
\end{equation}
Similarly,
\begin{equation}\label{eq:sq_big}
\sum_{i=s+1}^p a_i^2 \le (p-s)\left( \frac{\sum_{i=1}^s a_i}{s} \right)^2
\end{equation}
and it follows from \eqref{eq:sq_ratio} and \eqref{eq:sq_big} that 
\[
\frac{\sum_{i=1}^s a_i^2}{\sum_{i=1}^p a_i^2} \ge \frac{\sum_{i=1}^s a_i^2}{\sum_{i=1}^s a_i^2 + (p-s)\left( \frac{\sum_{i=1}^s a_i}{s} \right)^2} \ge \frac{\frac{sM}{p^2}}{\frac{sM}{p^2} + \frac{p-s}{s^2}} \ge 1 - \frac{p^3}{Ms^3}.
\]
\end{proof}

\begin{lemma}\label{lm:small_sing_ld}
  Assume $n/p \goto 1$, i.e.~$\delta = 1$. Suppose $s$ obeys
  $s/p \goto 0.01$. Then with probability tending to one, the smallest singular value obeys 
\[
\min_{|{\mathcal{S}}| = s} ~ \sigma_{\min}(\bX_{\mathcal{S}}) \ge \frac12,
\]
where the minimization is over all subsets of $\{1, \ldots, p\}$ of
cardinality $s$.
\end{lemma}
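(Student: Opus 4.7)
The plan is a straightforward combination of a sharp concentration inequality for the smallest singular value of a Gaussian matrix with a union bound over the $\binom{p}{s}$ column subsets of size $s$. The scaling constants $\delta = 1$ and $s/p \to 0.01$ have been chosen precisely so that this union bound still closes.

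First, I would invoke the classical Davidson--Szarek/Gordon concentration bound: for an $n \times s$ matrix $\bG$ with i.i.d.\ $\mathcal{N}(0,1/n)$ entries,
\[
\P\!\left(\sigma_{\min}(\bG) \le 1 - \sqrt{s/n} - u \right) \le e^{-nu^2/2}
\]
for every $u > 0$. Fix any subset $\mathcal{S} \subset \{1,\dots,p\}$ with $|\mathcal{S}| = s$. Since $\bX_{\mathcal{S}}$ has exactly this distribution and since $s/n \to 0.01$ (because $n/p \to 1$), the centering term $1 - \sqrt{s/n}$ converges to $0.9$. Choosing $u = 0.4$ therefore guarantees, for all $p$ sufficiently large,
\[
\P\!\left(\sigma_{\min}(\bX_{\mathcal{S}}) \le \tfrac{1}{2}\right) \le \P\!\left(\sigma_{\min}(\bX_{\mathcal{S}}) \le 1 - \sqrt{s/n} - 0.4\right) \le e^{-0.08 n}.
\]

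The second step is the union bound. Using the standard estimate $\binom{p}{s} \le (ep/s)^s$ with $s/p \to 0.01$,
\[
\binom{p}{s} \le \exp\!\left( s \log(ep/s) \right) = \exp\!\left( (0.01 + o(1))\, p \cdot \log(100 e) \right) \le \exp(0.057\, p)
\]
for $p$ large. Combining with the per-subset bound and using $n/p \to 1$ gives
\[
\P\!\left( \min_{|\mathcal{S}| = s} \sigma_{\min}(\bX_{\mathcal{S}}) < \tfrac{1}{2} \right) \le \binom{p}{s}\, e^{-0.08 n} \le \exp\!\left( (0.057 - 0.08 + o(1))\, p \right) \longrightarrow 0,
\]
which is exactly the claim.

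The only delicate point is checking that the constants actually cooperate: we need the Gaussian small-ball exponent $nu^2/2$ to beat the combinatorial entropy $s \log(ep/s)$, and with $\delta = 1$, $s/p = 0.01$, and the target threshold $\tfrac{1}{2}$, the margin $0.08 - 0.057 > 0$ is positive but not enormous. That margin is what makes the precise choice of $0.01$ and $1/2$ in the statement matter; once it is verified, the rest of the argument is a routine application of Gaussian concentration and a subset union bound. No control of correlations across subsets is needed because the union bound suffices given the chosen constants.
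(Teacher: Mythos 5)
Your proposal is essentially the paper's own proof: the Davidson--Szarek/Gordon deviation inequality for $\sigma_{\min}$ combined with a union bound over all $\binom{p}{s}$ size-$s$ column subsets, with $(ep/s)^s$ in place of the paper's entropy bound $\exp(pH(s/p))$ (numerically almost identical at $s/p = 0.01$).

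One small numerical caution: taking $u=0.4$ exactly is a knife-edge choice, since the hypothesis only gives $s/n \to 0.01$, so $\sqrt{s/n}$ could approach $0.1$ from above and then $1-\sqrt{s/n}-0.4 < \tfrac12$ for every $p$, invalidating the monotonicity step $\P(\sigma_{\min}\le \tfrac12)\le\P(\sigma_{\min}\le 1-\sqrt{s/n}-u)$. This is why the paper plugs in $t=0.399$; since $0.399^2/2\approx 0.0796$ still dominates the entropy exponent $\approx 0.057$, replacing $0.4$ by any $u$ slightly below $0.4$ closes the argument with no other change.
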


\begin{proof}[Proof of Lemma~\ref{lm:small_sing_ld}]
  For a fixed ${\mathcal{S}}$, we have
\[
\P\left( \sigma_{\min}(\bX_{\mathcal{S}}) < 1 - \sqrt{s/n} - t \right) \le \e^{-nt^2/2}
\]
for all $t \ge 0$, please see \cite{vershynin}. The claim follows from
plugging $t = 0.399$ and a union bound over
${p \choose s} \le \exp(p H(s/p))$ subsets, where
$H(q) = -q\log q - (1-q) \log(1-q)$. We omit the details.
\end{proof}

The next lemma bounds the Lasso solution in $\ell_2$ norm uniformly over $\lambda$. We use some ideas from the proof of Lemma 3.2 in \cite{BM12}.
\begin{lemma}\label{lm:uniform_l2norm}
Given any positive constants $\lambda_{\min} < \lambda_{\max}$, there exists a constant $C$ such that
\[
\P\left( \sup_{\lambda_{\min} \le \lambda \le \lambda_{\max}} \|\widehat{\bm\beta}(\lambda)\| \le C\sqrt{p} \right) \goto 1.
\]
\end{lemma}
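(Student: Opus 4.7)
The plan is to combine a uniform $\ell_1$ bound on $\widehat{\bbeta}(\lambda)$ coming from optimality of the Lasso with a uniform lower bound on the smallest singular value of restricted submatrices of $\bX$, and then run a contradiction argument. First, writing $f_\lambda(\bm b) = \tfrac{1}{2}\|\by - \bX \bm b\|^2 + \lambda \|\bm b\|_1$ and comparing $f_\lambda(\widehat{\bbeta}(\lambda))$ to $f_\lambda(\bm 0)$ immediately yields both $\lambda\|\widehat{\bbeta}(\lambda)\|_1 \le \tfrac{1}{2}\|\by\|^2$ and $\|\by - \bX\widehat{\bbeta}(\lambda)\|^2 \le \|\by\|^2$. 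Under the working hypothesis $\|\by\|^2 = O(p)$ with probability tending to one (using $\sigma_{\max}(\bX) = O(1)$ and $\|\bbeta\|, \|\bz\| = O(\sqrt{p})$), so there exist constants $C_1, C_2$ with
\[
\sup_{\lambda_{\min} \le \lambda \le \lambda_{\max}} \|\widehat{\bbeta}(\lambda)\|_1 \le C_1 p \quad\text{and}\quad \sup_{\lambda_{\min} \le \lambda \le \lambda_{\max}} \|\bX\widehat{\bbeta}(\lambda)\| \le C_2 \sqrt{p}
\]
with probability tending to one. The $\lambda$-uniformity here is automatic, since $\lambda$ enters only through the threshold $\lambda \ge \lambda_{\min}$ and no net over $\lambda$ is needed.

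Second, I would assume toward a contradiction that $\|\widehat{\bbeta}(\lambda)\| > K\sqrt{p}$ for some $\lambda$ and some large constant $K$ to be chosen. Sort the absolute values $|\widehat\beta_{(1)}| \ge |\widehat\beta_{(2)}| \ge \cdots$ and let $S^*$ be the index set of the top $s_0 p$ entries, for a small constant $s_0 > 0$ to be fixed shortly. A one-line averaging argument gives $|\widehat\beta_{(s_0 p)}| \le C_1 / s_0$, so
\[
\|\widehat{\bbeta}_{(S^*)^c}\|^2 \le \frac{C_1}{s_0}\|\widehat{\bbeta}_{(S^*)^c}\|_1 \le \frac{C_1^2 p}{s_0},
\]
and hence $\|\widehat{\bbeta}_{S^*}\|^2 \ge K^2 p - C_1^2 p/s_0 \ge \tfrac{1}{2} K^2 p$ once $K^2 s_0 \ge 2 C_1^2$. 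In other words, the bulk of the $\ell_2$ mass is concentrated on at most $s_0 p$ coordinates. (An alternative route to the same concentration statement uses Lemma \ref{lm:reverse_cauchy} applied to the sorted absolute values of $\widehat{\bbeta}$, but the averaging argument above is more direct and keeps control of the constants explicit.)

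Third, I would invoke the uniform singular-value estimate of Lemma \ref{lm:small_sing_ld}, or rather its natural extension to arbitrary $\delta > 0$, obtained by shrinking $s_0$ until the binary-entropy factor $H(s_0)$ arising from the union bound $\binom{p}{s_0 p} \le e^{p H(s_0)}$ over subsets is dominated by the Gaussian tail on $\sigma_{\min}$ of an $n \times s_0 p$ matrix with $\mathcal{N}(0,1/n)$ entries. This yields $\min_{|S|=s_0 p}\sigma_{\min}(\bX_S) \ge 1/2$ with probability tending to one. Combined with $\sigma_{\max}(\bX) = O(1)$, the reverse triangle inequality gives
\[
\|\bX\widehat{\bbeta}\| \ge \|\bX_{S^*}\widehat{\bbeta}_{S^*}\| - \|\bX_{(S^*)^c}\widehat{\bbeta}_{(S^*)^c}\| \ge \tfrac{1}{2}\|\widehat{\bbeta}_{S^*}\| - \sigma_{\max}(\bX)\|\widehat{\bbeta}_{(S^*)^c}\| \gtrsim K\sqrt{p} - O(\sqrt{p/s_0}).
\]
For $K$ exceeding a threshold that depends only on $C_1, C_2, s_0,$ and $\delta$, the right-hand side exceeds $C_2\sqrt{p}$, contradicting the a priori bound from the first step. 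Taking $C$ to be this threshold completes the argument.

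The main obstacle I anticipate is the extension of Lemma \ref{lm:small_sing_ld} from the specific setting $\delta = 1$ to arbitrary $\delta > 0$. Concretely, Gaussian concentration for a single $n \times s_0 p$ submatrix gives $\sigma_{\min} \ge 1 - \sqrt{s_0/\delta} - t$ with failure probability $e^{-nt^2/2}$, and the union bound survives precisely when $s_0$ is chosen small enough that $H(s_0) < \delta t^2/2$; fixing $s_0$ (and the corresponding $K$) once and for all in terms of $\delta$ alone is harmless, but care is needed in bookkeeping the dependence on $\delta$ and the prior $\Pi$. Beyond this, the entire argument is robust, and since every estimate is independent of $\lambda$ except through the lower threshold $\lambda_{\min}$, the conclusion holds simultaneously over $[\lambda_{\min}, \lambda_{\max}]$ with no discretization or $\varepsilon$-net in $\lambda$ required.
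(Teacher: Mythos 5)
Your proof is correct, and it takes a genuinely different route from the paper. The paper splits into three cases: for $\delta < 1$ it decomposes $\widehat{\bbeta}(\lambda)$ into its projections onto the null space and row space of $\bX$, controls the null-space component via Kashin's theorem (Theorem F.1 of \cite{BM12}), and controls the row-space component via the smallest nonzero singular value of $\bX$; for $\delta > 1$ the null space is trivial and the argument degenerates; and for $\delta = 1$, where the Kashin constant blows up as the null space shrinks, it runs a separate argument based on Lemma~\ref{lm:reverse_cauchy} together with the restricted singular value bound of Lemma~\ref{lm:small_sing_ld}. You instead run a single argument for all $\delta > 0$: extract the uniform $\ell_1$ and $\|\bX\widehat{\bbeta}\|$ bounds from optimality exactly as the paper does, then argue by contradiction that a large $\ell_2$ norm forces a large $\|\bX\widehat{\bbeta}\|$ via an averaging argument (in place of Lemma~\ref{lm:reverse_cauchy}, and more cleanly, since you get an absolute bound $\|\widehat{\bbeta}_{(S^*)^c}\|^2 \le C_1^2 p/s_0$ on the tail rather than a relative one) plus a restricted-singular-value bound that generalizes Lemma~\ref{lm:small_sing_ld} to arbitrary $\delta$ by tuning $s_0$. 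The generalization is routine: for $\delta > 1$ one could even skip it, since $\sigma_{\min}(\bX)$ itself is bounded below, and for $\delta \le 1$ the entropy-vs.-Gaussian-tail tradeoff you describe works. What your approach buys is uniformity across $\delta$ and avoidance of Kashin's theorem; what the paper's approach buys is that for $\delta \ne 1$ there is no combinatorial union bound over subsets at all. Your observation that no $\varepsilon$-net in $\lambda$ is needed, because every ingredient is either $\lambda$-independent or monotone in $\lambda \ge \lambda_{\min}$, matches the paper's implicit reasoning.
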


\begin{proof}[Proof of Lemma \ref{lm:uniform_l2norm}]
  For simplicity, we omit the dependency of $\widehat\bbeta$ on
  $\lambda$ when clear from context. We first consider the case where
  $\delta < 1$. Write
  $\widehat\bbeta = \mathcal{P}_1(\widehat\bbeta) +
  \mathcal{P}_2(\widehat\bbeta)$,
  where $\mathcal{P}_1(\widehat\bbeta)$ is the projection of
  $\widehat\bbeta$ onto the null space of $\bX$ and
  $\mathcal{P}_2(\widehat\bbeta)$ is the projection of
  $\widehat\bbeta$ onto the row space of $\bX$. By the rotational
  invariance of \iid~Gaussian vectors, the null space of $\bX$ is a
  random subspace of dimension $p - n = (1 - \delta + o(1))p$ with
  uniform orientation. Since $\mathcal{P}_1(\widehat\bbeta)$ belongs
  to the null space, Kashin's Theorem (see Theorem F.1 in \cite{BM12})
  gives that with probability at least $1 - 2^{-p}$,
\begin{equation}\label{eq:beta_decompose}
\begin{aligned}
\|\widehat\bbeta\|^2 &= \|\mathcal{P}_1(\widehat\bbeta)\|^2 + \|\mathcal{P}_2(\widehat\bbeta)\|^2 \\
&\le c_1\frac{\|\mathcal{P}_1(\widehat\bbeta)\|_1^2}{p} + \|\mathcal{P}_2(\widehat\bbeta)\|^2\\
                     & \le 2c_1\frac{\|\widehat\bbeta\|_1^2+ \|\mathcal{P}_2(\widehat\bbeta)\|_1^2}{p} + \|\mathcal{P}_2(\widehat\bbeta)\|^2\\
                     & \le \frac{2c_1\|\widehat\bbeta\|_1^2}{p} + (1+2c_1)\|\mathcal{P}_2(\widehat\bbeta)\|^2
\end{aligned}
\end{equation}
for some constant $c_1$ depending only on $\delta$; the first step
uses Kashin's theorem, the second the triangle inequality, and the
third Cauchy-Schwarz inequality.  The smallest nonzero singular value
of the Wishart matrix $\bX^\T \bX$ is concentrated at
$(1/\sqrt{\delta} - 1)^2$ with probability tending to one (see
e.g.~\cite{vershynin}). In addition, since
$\mathcal{P}_2(\widehat\bbeta)$ belongs to the row space of $\bX$, we
have
\[
\|\mathcal{P}_2(\widehat\bbeta)\|^2 \le c_2 \|\bX \mathcal{P}_2(\widehat\bbeta)\|^2
\]
with probability approaching one. Above, $c_2$ can be chosen to be
$(1/\sqrt{\delta} - 1)^{-2} + o(1)$. Set $c_3 =
c_2(1+2c_1)$. Continuing \eqref{eq:beta_decompose} yields
\[
\begin{aligned}
\|\widehat\bbeta\|^2 &\le \frac{2c_1\|\widehat\bbeta\|_1^2}{p} + c_2(1+2c_1)\|\bX\mathcal{P}_2(\widehat\bbeta)\|^2 \\
                        &= \frac{2c_1\|\widehat\bbeta\|_1^2}{p} + c_3\| \bX \widehat\bbeta\|^2\\
                        &\le \frac{2c_1\|\widehat\bbeta\|_1^2}{p} + 2c_3\|\by - \bX \widehat\bbeta\|^2 + 2c_3\|\by\|^2\\
                        &\le \frac{2c_1\left( \frac12\|\by- \bX\widehat\bbeta\|^2 + \lambda\|\widehat\bbeta\|_1 \right)^2}{\lambda^2p} + 4c_3\left( \frac12\|\by - \bX\widehat\bbeta\|^2+\lambda\|\widehat\bbeta\|_1\right) + 2c_3\|\by\|^2\\
                        &\le \frac{c_1\|\by\|^4}{2\lambda^2p} + 4c_3\|\by\|^2,
\end{aligned}
\]
where in the last inequality we use the fact
$\frac12 \|\by-\bX \widehat\bbeta\|^2 + \lambda\|\widehat\bbeta\|_1
\le \frac12 \|\by\|^2$.
Thus, it suffices to bound $\|\by\|^2$. The largest singular value of
$\bX^\T \bX$ is bounded above by
$(1/\sqrt{\delta} +1)^2 + o_{\P}(1)$. Therefore, 
\[
\|\by\|^2 = \|\bX \bbeta + \bz\|^2 \le 2\|\bX\bbeta\|^2 + 2\|\bz\|^2 \le c_4\|\bbeta\|^2 + 2\|\bz\|^2.
\]
Since both $\beta_i$ and $z_i$ have bounded second moments, the law of
large numbers claims that there exists a constant $c_5$ such that
$c_4\|\bbeta\|^2 + 2\|\bz\|^2 \le c_5 p$ with probability approaching
one. Combining all the inequalities above gives
\[
\sup_{\lambda_{\min} \le \lambda \le \lambda_{\max}}\|\widehat\bbeta(\lambda)\|^2 \le \frac{c_1c_5^2p}{2\lambda^2} + 2c_3c_5p \le \left( \frac{c_1c_5^2}{2\lambda_{\min}^2} + 2c_3c_5 \right)p
\]
with probability converging to one.

In the case where $\delta > 1$, the null space of $\bX$ reduces to
$\mathbf{0}$, hence $\mathcal{P}_1(\widehat\bbeta) = 0$. Therefore,
this reduces to a special case of the above argument.

Now, we turn to work on the case where $\delta = 1$. We start with
\[
\| \bX \widehat\bbeta \|^2 \le 2 \|\by\|^2 + 2\|\by-\bX \widehat\bbeta\|^2
\]
and
\[
\frac12 \|\by-\bX \widehat\bbeta\|^2 + \lambda\|\widehat\bbeta\|_1 \le \frac12 \|\by\|^2.
\]
These two inequalities give that simultaneously over all $\lambda$,
\begin{subequations}\label{eq:xbeta_l1}
\begin{align}
&\| \bX \widehat\bbeta(\lambda) \|^2 \le 4 \|\by\|^2 \le 4c_5 p \label{eq:xbeta}\\
& \|\widehat\bbeta(\lambda)\|_1 \le \frac1{2\lambda_{\min}} \|\by\|^2 \le \frac{c_5 p}{2\lambda_{\min}}\label{eq:l1bd}
\end{align}
\end{subequations}
with probability converging to one.
Let $M$ be any constant larger than $1.7 \times 10^7$. If $ p \|\widehat\bbeta\|^2/\|\widehat\bbeta\|_1^2 < M$, by \eqref{eq:l1bd}, we get
\begin{equation}\label{eq:beta_bound_d1}
\|\widehat\bbeta\| \le \frac{\sqrt{M} c_5}{2\lambda_{\min}} \sqrt{p}.
\end{equation}
Otherwise, denoting by ${\mathcal{T}}$ the set of indices $1 \le i \le p$ that correspond to the $s := \lceil p/100 \rceil$ largest $|\widehat\beta|_i$, from Lemma~\ref{lm:reverse_cauchy} we have
\begin{equation}\label{eq:lemma_m}
\frac{\|\widehat\bbeta_{{\mathcal{T}}}\|^2}{\|\widehat\bbeta\|^2} \ge 1 - \frac{p^3}{M s^3} \ge 1 - \frac{10^6}{M}.
\end{equation}
To proceed, note that
\begin{equation}\nonumber
\begin{aligned}
\| \bX \widehat\bbeta \| &= \| \bX_{\mathcal{T}} \widehat\bbeta_{\mathcal{T}} + \bX_{\overline {\mathcal{T}}} \widehat\bbeta_{\overline {\mathcal{T}}}\| \\
&\ge \| \bX_{\mathcal{T}} \widehat\bbeta_{\mathcal{T}}\| - \|\bX_{\overline {\mathcal{T}}} \widehat\bbeta_{\overline {\mathcal{T}}}\| \\
&\ge \| \bX_{\mathcal{T}} \widehat\bbeta_{\mathcal{T}}\| - \sigma_{\max}(\bX) \|\widehat\bbeta_{\overline {\mathcal{T}}}\|.
\end{aligned}
\end{equation}
By Lemma~\ref{lm:small_sing_ld}, we get
$\| \bX_{\mathcal{T}} \widehat\bbeta_{\mathcal{T}}\| \ge \frac12 \|\bbeta_{\mathcal{T}}\|$, and it is also
clear that $\sigma_{\max}(\bX) = 2 + o_{\P}(1)$. Thus, by
\eqref{eq:lemma_m} we obtain
\[
\begin{aligned}
\| \bX \widehat\bbeta \| \ge \| \bX_{\mathcal{T}} \widehat\bbeta_{\mathcal{T}}\| - \sigma_{\max}(\bX) \|\widehat\bbeta_{\overline {\mathcal{T}}}\|
&\ge \frac12 \| \widehat\bbeta_{\mathcal{T}}\| - (2 + o_{\P}(1))\|\widehat\bbeta_{\overline {\mathcal{T}}}\|\\
&\ge \frac12 \sqrt{1 - \frac{10^6}{M}} \|\widehat\bbeta\| - (2 + o_{\P}(1)) \sqrt{\frac{10^6}{M}} \|\widehat\bbeta\|\\
& = (c + o_{\P}(1))\|\widehat\bbeta\|,
\end{aligned}
\]
where $c = \frac12 \sqrt{1 - 10^6/M} - 2\sqrt{10^6/M} > 0$. Hence, owing to \eqref{eq:xbeta},
\begin{equation}\label{eq:beta_bound_d2}
\|\widehat\bbeta\| \le \frac{(2+o_{\P}(1))\sqrt{c_5}}{c} \sqrt{p}
\end{equation}

In summary, with probability tending to one, in either case, namely, \eqref{eq:beta_bound_d1} or \eqref{eq:beta_bound_d2},
\[
\|\widehat\bbeta\| \le C\sqrt{p}
\]
for come constant $C$. This holds uniformly for all $\lambda \in [\lambda_{\min}, \lambda_{\max}]$, and thus completes the proof.

\end{proof}


Now, we conclude this section by proving our main lemma.
\begin{proof}[Proof of Lemma \ref{lm:l2_local_continu}]
The proof extensively applies Lemma 3.1\footnote{The conclusion of this lemma, $\|\bm r\| \le \sqrt{p}\xi(\epsilon, c_1, \ldots, c_5)$ in our notation, can be effortlessly strengthened to $\|\bm r\| \le (\sqrt{\epsilon} + \epsilon/\lambda)\xi(c_1, \ldots, c_5)\sqrt{p}$.} in \cite{BM12} and Lemma~\ref{lm:uniform_l2norm}. Let $\bm x + \bm r = \widehat\bbeta(\lambda)$ and $\bm x = \widehat\bbeta(\lambda^-)$ be the notations in the statement of Lemma 3.1 in \cite{BM12}. Among the fives assumptions needed in that lemma, it suffices to verify the first, third and fourth. Lemma \ref{lm:uniform_l2norm} asserts that
\[
\sup_{\lambda^-\le \lambda \le \lambda^+} \|\bm r(\lambda)\| = \sup_{\lambda^-\le \lambda \le \lambda^+}\|\widehat\bbeta(\lambda) - \widehat\bbeta(\lambda^-)\| \le 2A\sqrt{p}
\]
with probability approaching one. This fulfills the first assumption by taking $c_1 = 2A$. Next, let $\bg(\lambda^-) \in \partial\|\widehat\bbeta(\lambda^-)\|_1$ obey
\[
\bX^\T(\by - \bX \widehat\bbeta(\lambda^-)) = \lambda^- \bg(\lambda^-).
\]
Hence,
\[
\left\| - \bX^\T(\by - \bX\widehat\bbeta(\lambda^-)) + \lambda \bg(\lambda^-)  \right\| = (\lambda - \lambda^-)\|\bg(\lambda^-)\| \le (\lambda^+ - \lambda^-)\sqrt{p}
\]
which certifies the third assumption. To verify the fourth assumption,
taking $t \goto \infty$ in Proposition 3.6 of \cite{BM12} ensures the
existence of constants $c_2, c_3$ and $c_4$ such that, with
probability tending to one, $\sigma_{\min}(\bX_{T\cup T'}) \ge c_4$
for $T = \{ j: |g_j(\lambda^-)| \ge 1 - c_2\}$ and arbitrary
$T' \subset \{1, \ldots, p\}$ with $|T'| \le c_3 p$. Further, these
constants can be independent of $\lambda^-$ since
$\lambda^- \in [\lambda_{\min}, \lambda_{\max}]$ belongs to a
compact interval. Therefore, this lemma concludes that, with probability
approaching one,
\begin{align*}
\sup_{\lambda^- \le \lambda \le \lambda^+} \| \widehat\bbeta(\lambda) - \widehat\bbeta(\lambda^-)\| &\le  \sup_{\lambda^- \le \lambda \le \lambda^+}  \left(\sqrt{\lambda-\lambda^-} + \frac{\lambda-\lambda^-}{\lambda} \right) \xi \sqrt{p} \\
&\le \left(\sqrt{\lambda^+-\lambda^-} + \frac{\lambda^+-\lambda^-}{\lambda_{\min}} \right) \xi \sqrt{p} \\
&= O\left(\sqrt{(\lambda^+-\lambda^-)p}\right).
\end{align*}
This finishes the proof.
\end{proof}

\section{Optimizing the Trade-off}
\label{sec:optimizing-tradeoff}

In this section, we still work under our working hypothesis and
$\sigma > 0$. Fixing $\delta$ and $\epsilon$, we aim to show that no pairs below the boundary curve can be realized. 
Owing to the uniform convergence established in
Appendix~\ref{sec:uniform-over-lambda}, it is sufficient to study the
range of $(\fpower(\lambda), \ffdp(\lambda))$ introduced in Appendix \ref{sec:proof-roadmap} by varying $\Pi^\star$
and $\lambda$. To this end, we introduce a useful trick based on the
following lemma.

\begin{lemma}\label{lm:convexity}
For any fixed $\alpha > 0$, define a function $y = f(x)$ in the parametric form:
\begin{align*}
x(t)& = \P(|t + W| > \alpha)\\
y(t)& = \E \left( \eta_{\alpha}(t + W) - t \right)^2
\end{align*}
for $t \ge 0$, where $W$ is a standard normal. Then $f$ is strictly
concave.
\end{lemma}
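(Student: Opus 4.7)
Plan. Since $x'(t)=\phi(t-\alpha)-\phi(t+\alpha)>0$ for $t>0$ (because $|t-\alpha|<t+\alpha$ makes the Gaussian density at $t-\alpha$ larger), the curve $(x(t),y(t))$ is regular and the parametric second-derivative formula
\[
f''(x)=\frac{y''(t)\,x'(t)-y'(t)\,x''(t)}{x'(t)^{3}}
\]
reduces strict concavity to showing $y''(t)\,x'(t)-y'(t)\,x''(t)<0$ pointwise for $t>0$.

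To get a clean expression for $y'(t)$ I would apply Stein's identity to $g(W)=\eta_\alpha(t+W)$. Because $\eta_\alpha'(u)=\mathbf{1}_{|u|>\alpha}$ almost everywhere, one obtains $\E[W\,\eta_\alpha(t+W)]=\P(|t+W|>\alpha)=x(t)$; expanding $y(t)=\E[(\eta_\alpha(t+W)-t)^{2}]$ and simplifying with this identity gives the very clean formula
\[
y'(t)=2t\bigl(1-x(t)\bigr)=2tA(t),\qquad A(t):=\P(|t+W|\le\alpha),
\]
and hence $y''(t)=2A(t)-2tx'(t)$. A direct differentiation yields $x''(t)=-tx'(t)+\alpha B(t)$ with $B(t):=\phi(t-\alpha)+\phi(t+\alpha)$, and the Gaussian factorization $\phi(t\pm\alpha)=(2\pi)^{-1/2}e^{-(t^{2}+\alpha^{2})/2}e^{\pm t\alpha}$ produces the key algebraic identity $B(t)=x'(t)\coth(t\alpha)$. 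Substituting everything in, the numerator factors as $y''x'-y'x''=-2x'(t)\,F(t)$ where
\[
F(t):=tx'(t)-A(t)\bigl[1+t^{2}-t\alpha\coth(t\alpha)\bigr],
\]
so strict concavity reduces to the single scalar inequality $F(t)>0$ for all $t>0$.

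The main obstacle is proving $F>0$ globally, since $F(0)=0$ and the inequality is quite tight. My plan has three parts. Near $t=0$, a Taylor expansion gives $F(t)=c(\alpha)t^{2}+O(t^{4})$ with $c(\alpha)=2\alpha\phi(\alpha)-(1-2\Phi(-\alpha))(1-\alpha^{2}/3)$; positivity of $c(\alpha)$ on $(0,\infty)$ reduces via differentiation in $\alpha$ to the classical elementary bound $1-2\Phi(-\alpha)>2\alpha\phi(\alpha)$, itself proved by noting both sides vanish at $\alpha=0$ and the derivative of their difference equals $2\alpha^{2}\phi(\alpha)>0$. For large $t$, Mills' ratio gives $A(t)\sim\phi(t-\alpha)/(t-\alpha)$ while $tx'(t)\sim t\phi(t-\alpha)$, so $F(t)>0$ asymptotically. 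The delicate remaining step is ruling out interior zeros; the natural route is to exploit the factorization $x'(t)=2(2\pi)^{-1/2}e^{-(t^{2}+\alpha^{2})/2}\sinh(t\alpha)$ to rewrite $F(t)\sinh(t\alpha)/x'(t)$ in a form whose sign is manifestly controlled by a single-variable monotonicity, or equivalently to argue directly that $L(t):=2tA(t)/x'(t)$ is strictly decreasing (which is equivalent to $F>0$) via log-concavity of the restriction of the Gaussian density to intervals. Executing this last step cleanly is the technical heart of the proof.
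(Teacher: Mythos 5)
Your reduction to a single scalar inequality is correct and, after unwinding, is the same object the paper attacks. Stein's identity indeed gives $y'(t)=2t\,\P(|t+W|\le\alpha)=2tA(t)$ (this is also what the paper computes as $\tfrac{\diff y}{\diff t}=2t[\Phi(\alpha-t)-\Phi(-\alpha-t)]$), the identities $x''=-tx'+\alpha B$ and $B=x'\coth(t\alpha)$ are right, and your $F(t)>0$ is equivalent to $L(t)=2tA(t)/x'(t)=f'(t)$ being strictly decreasing. In fact the paper works with exactly $f'(t)$: after the Gaussian factorization $\phi(t\pm\alpha)=\tfrac{1}{\sqrt{2\pi}}\e^{-(t^2+\alpha^2)/2}\e^{\mp t\alpha}$, the paper rewrites $f'(t)=2\e^{\alpha^2/2}g(t)$ with
\[
g(t)=\frac{\int_0^\alpha \e^{-u^2/2}\cosh(tu)\,\diff u}{\int_0^\alpha \cosh(tu)\,\diff u},
\]
and strict concavity becomes $g'(t)<0$. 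Your $L(t)$ equals $2\e^{\alpha^2/2}g(t)$, so the two routes coincide up to normalization.

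The genuine gap is precisely the step you flag as ``the technical heart'': you have not proved $F(t)>0$ on all of $(0,\infty)$. The Taylor expansion at $t=0$ (your $c(\alpha)>0$) and the Mills-ratio asymptotics at $t\to\infty$ establish the sign only at the two ends; they cannot rule out an interior zero, and the inequality is numerically tight (e.g.\ $c(\alpha)\approx 0.001$ near $\alpha=0.5$), so one cannot wave this away. Your suggestion to use ``log-concavity of the truncated Gaussian'' is pointing in the right direction but is left entirely unexecuted; as stated it is a hope, not an argument. The paper closes exactly this gap with a different device: it writes the numerator of $g'(t)$ as $\int_0^\alpha\!\!\int_0^\alpha \e^{-u^2/2}\bigl(u\sinh(tu)\cosh(tv)-v\cosh(tu)\sinh(tv)\bigr)\diff u\,\diff v$, symmetrizes in $(u,v)$ to get the factor $\tfrac12(\e^{-u^2/2}-\e^{-v^2/2})$, and then observes that for $u\ge v$ the second factor satisfies $u\sinh(tu)\cosh(tv)-v\cosh(tu)\sinh(tv)\ge v\sinh\bigl(t(u-v)\bigr)\ge 0$, so the integrand is pointwise $\le 0$ and strictly negative on a set of positive measure. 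This is a one-line Chebyshev/correlation argument (stochastic monotonicity of the $\cosh(t\,\cdot)$-tilted measure on $[0,\alpha]$ against the decreasing weight $\e^{-u^2/2}$) and it is what your ``log-concavity'' intuition needs to become. Until that step is supplied, the proposal is an (otherwise correct) reduction, not a proof.
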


We use this to simplify the problem of detecting feasible pairs
$(\fpower, \ffdp)$. Denote by $\pi^\star := \Theta^\star/\tau$. Then
\eqref{eq:amp_eqn} implies
\begin{equation}\label{eq:eqn_temp}
\begin{aligned}
(1-\epsilon)\E\eta_{\alpha}(W)^2 + \epsilon \E(\eta_{\alpha}(\pi^{\star} + W) - \pi^{\star})^2 & < \delta,\\
(1-\epsilon)\P(|W| > \alpha) + \epsilon \P(|\pi^{\star} + W| > \alpha) & < \min\{\delta, 1\}. 
\end{aligned}
\end{equation}
We emphasize that \eqref{eq:eqn_temp} is not only necessary, but also sufficient in the following sense: given $0 < \delta_1 < \delta, 0 < \delta_2 < \min\{\delta, 1\}$ and $\pi^\star$, we can solve for $\tau$ by setting
\[
(1-\epsilon)\E\eta_{\alpha}(W)^2 + \epsilon \E(\eta_{\alpha}(\pi^{\star} + W) - \pi^{\star})^2 = \delta_1
\]
and making use of the first line of \eqref{eq:amp_eqn}, which can be alternatively written as
\[
\tau^2 = \sigma^2 + \frac{\tau^2}{\delta} \left[ (1-\epsilon)\E\eta_{\alpha}(W)^2 + \epsilon \E(\eta_{\alpha}(\pi^{\star} + W) - \pi^{\star})^2 \right].
\]
($\Pi^\star = \tau \pi^\star$ is also determined, so does
$\Pi$),\footnote{Not every pair
  $(\delta_1, \delta_2) \in (0, \delta) \times (0, \min\{\delta, 1\})$
  is feasible below the Donoho-Tanner phase transition. Nevertheless, this does not affect our discussion.} and along
with
\[
(1-\epsilon)\P(|W| > \alpha) + \epsilon \P(|\pi^{\star} + W| > \alpha) = \delta_2,
\]
$\lambda$ is also uniquely determined. 

Since \eqref{eq:eqn_temp} is invariant if $\pi^\star$ is replaced by
$|\pi^\star|$, we assume $\pi^\star \ge 0$ without loss of
generality. As a function of $t$, $\P(|t + W| > \alpha)$ attains the
minimum $\P(|W|> \alpha) = 2\Phi(-\alpha)$ at $t = 0$, and the
supremum equal to one at $t = \infty$. Hence, there must exist
$\epsilon' \in (0, 1)$ obeying
\begin{equation}\label{eq:intro_epsi_prime}
\P(|\pi^{\star} + W| > \alpha)  = (1 - \epsilon')\P(|W| > \alpha) + \epsilon'.
\end{equation}
As a consequence, the predicted TPP and FDP can be alternatively expressed as
\begin{equation}\label{eq:fdr_power_compact}
\fpower = 2(1-\epsilon')\Phi(-\alpha) + \epsilon', \quad \ffdp = \frac{2(1-\epsilon)\Phi(-\alpha)}{2(1-\epsilon\epsilon')\Phi(-\alpha) + \epsilon\epsilon'}.
\end{equation}
Compared to the original formulation, this expression is preferred
since it only involves scalars.

Now, we seek equations that govern $\epsilon'$ and $\alpha$, given
$\delta$ and $\epsilon$. Since both $\E (\eta_{\alpha}(t + W) - t)^2$
and $\P(|t + W| > \alpha)$ are monotonically increasing with respect
to $t \ge 0$, there exists a function $f$ obeying
\[
\E (\eta_{\alpha}(t + W) - t)^2 = f\left( \P(|t + W| > \alpha) \right).
\]
Lemma \ref{lm:convexity} states that $f$ is concave.  Then 
\[
  \E(\eta_{\alpha}(\pi^{\star} + W) - \pi^{\star})^2 = 
 f \left( \P( |\pi^\star + W| > \alpha \right)
\]
and \eqref{eq:intro_epsi_prime} allows us to view the argument of $f$ in the right-hand side as an average of a random variable taking value  $\P(|W|  >  \alpha)$ with probability  $1-\epsilon'$  and value one with probability $\epsilon'$. Therefore, Jensen's inequality states that 
\[
  \E(\eta_{\alpha}(\pi^{\star} + W) - \pi^{\star})^2 
   \ge (1 - \epsilon')f \left( \P(|W|  >  \alpha) \right)+ \epsilon'f(1)\\
   = (1-\epsilon') \E \eta_{\alpha}(W)^2 + \epsilon'(\alpha^2 + 1).
\]
Combining this with \eqref{eq:eqn_temp} gives
\begin{subequations}\label{eq:two_eqn_comb}
\begin{align}
&(1-\epsilon\epsilon')\E\eta_{\alpha}(W)^2 + \epsilon\epsilon' (\alpha^2 + 1)   < \delta, \label{eq:amp_restrict_w}\\ 
&(1-\epsilon\epsilon')\P(|W| > \alpha) + \epsilon\epsilon'   < \min\{\delta, 1\}. \label{eq:amp_restrict_w2}
\end{align}
\end{subequations}
Similar to \eqref{eq:eqn_temp}, \eqref{eq:two_eqn_comb} is also sufficient in the same sense, and \eqref{eq:amp_restrict_w2} is automatically satisfied if $\delta > 1$.

The remaining part of this section studies the range of
$(\fpower, \ffdp)$ given by \eqref{eq:fdr_power_compact} under the
constraints \eqref{eq:two_eqn_comb}. Before delving into the details,
we remark that this reduction of $\pi^\star$ to a two-point prior is
realized by setting $\pi^\star = \infty$ 
(equivalently $\Pi^\star = \infty$) with probability $\epsilon'$ and
otherwise $+0$, where $+0$ is considered to be nonzero. Though this
prior is not valid since the working hypothesis requires a finite
second moment, it can nevertheless be approximated by a sequence of
instances, please see the example given in
Section~\ref{sec:main-results}.

The lemma below recognizes that for certain $(\delta, \epsilon)$
pairs, the TPP is asymptotically bounded above away from 1.
\begin{lemma}\label{lm:gamma_max}
Put
\[
 u^\star(\delta, \epsilon) :=
\begin{cases}
1 - \frac{(1-\delta)(\epsilon-\epsilon^\star)}{\epsilon(1-\epsilon^\star)},  & \quad \delta < 1 \mbox{ and } \epsilon > \epsilon^\star(\delta), \\
1,  & \quad \mbox{otherwise}.
\end{cases}
\]
Then
\[
\fpower <  u^\star(\delta, \epsilon).
\]
Moreover, $\fpower$ can be arbitrarily close to $ u^\star$. 
\end{lemma}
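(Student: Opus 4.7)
The plan is to reduce the question to a two-parameter optimization over $(\alpha,\epsilon') \in [0,\infty)\times[0,1]$ satisfying \eqref{eq:two_eqn_comb}, with objective $\fpower = 2(1-\epsilon')\Phi(-\alpha) + \epsilon'$. Writing $p_\alpha := 2\Phi(-\alpha)$, $F(\alpha) := \E\eta_\alpha(W)^2$, and $G(\alpha) := 1+\alpha^2 - F(\alpha)$, constraints (a) and (b) of \eqref{eq:two_eqn_comb} translate to $\epsilon\epsilon' < r_a(\alpha) := (\delta-F)/G$ and, for $\delta<1$, $\epsilon\epsilon' < r_b(\alpha) := (\delta-p_\alpha)/(1-p_\alpha)$. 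Since $\fpower = p_\alpha + (1-p_\alpha)\epsilon'$ is increasing in $\epsilon'$, the best bound at a fixed $\alpha$ is $\fpower < \min\{h_a(\alpha), h_b(\alpha)\}$, with $h_\bullet(\alpha) := p_\alpha + (1-p_\alpha)r_\bullet(\alpha)/\epsilon$. Recall $\epsilon^\star(\delta) = \sup_\alpha r_a(\alpha)$, attained at a Donoho-Tanner point $\alpha^\star$.

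In the easy regime $\delta \ge 1$ or $\epsilon \le \epsilon^\star(\delta)$, so $u^\star = 1$, the strict bound $\fpower < 1$ is immediate since the strict constraints exclude $\epsilon' = 1$ and $\alpha = 0$. For sharpness, I would choose $\epsilon' = 1-\eta$ and $\alpha$ near a maximizer of $r_a$ and verify both constraints hold for small $\eta > 0$; then $\fpower \to 1$ as $\eta \downarrow 0$.

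The core of the argument is the main case $\delta < 1$, $\epsilon > \epsilon^\star$. The crucial identity is
\[
p_{\alpha^\star} \;=\; \frac{\delta - \epsilon^\star}{1-\epsilon^\star},
\]
which I would establish by combining the first-order condition $F'(\alpha^\star)(1+(\alpha^\star)^2 - \delta) + 2\alpha^\star(\delta - F(\alpha^\star)) = 0$ (from $r_a'(\alpha^\star) = 0$) with the explicit derivative $F'(\alpha) = 2\alpha p_\alpha - 4\phi(\alpha)$ and the defining relation $F(\alpha^\star) + \epsilon^\star G(\alpha^\star) = \delta$. A short algebraic check then yields $r_b(\alpha^\star) = \epsilon^\star = r_a(\alpha^\star)$, so both constraints are simultaneously tight at $\alpha^\star$, and $h_a(\alpha^\star) = h_b(\alpha^\star) = u^\star$ by direct substitution.

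To see that $\min\{h_a, h_b\} \le u^\star$ everywhere, I split at $\alpha^\star$. For $\alpha \ge \alpha^\star$, one has $p_\alpha' < 0$ and $r_a' \le 0$, so $h_a'(\alpha) = p_\alpha'(1 - r_a/\epsilon) + (1-p_\alpha)r_a'/\epsilon$ is a sum of two nonpositive terms (note $r_a \le \epsilon^\star < \epsilon$), giving $h_a(\alpha) \le h_a(\alpha^\star) = u^\star$. For $\alpha_\delta < \alpha < \alpha^\star$ (with $p_{\alpha_\delta} = \delta$; below $\alpha_\delta$ constraint (b) is infeasible), we compute $h_b(\alpha) = \delta/\epsilon + p_\alpha(1-1/\epsilon)$, which is strictly increasing in $\alpha$ (as $1 - 1/\epsilon < 0$ and $p_\alpha$ is decreasing), so $h_b(\alpha) \le h_b(\alpha^\star) = u^\star$. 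Sharpness then follows by taking $\alpha = \alpha^\star$ and $\epsilon' = \epsilon^\star/\epsilon - \eta$ with $\eta \downarrow 0$, along which $\fpower \to u^\star$. The main obstacle I foresee is establishing the identity $p_{\alpha^\star} = (\delta-\epsilon^\star)/(1-\epsilon^\star)$, which is the nontrivial bridge between the implicit Donoho-Tanner characterization of $\epsilon^\star$ and the explicit formula for $u^\star$; a related subtlety is that \emph{both} constraints in \eqref{eq:two_eqn_comb} must be used, since relying on (a) alone would give a loose bound here (its maximum in $\alpha$ lies in a region where (b) is in fact violated).
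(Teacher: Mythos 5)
Your proof is correct, and it takes a genuinely different route from the paper's. The paper fixes $\epsilon'$ rather than $\alpha$: constraint \eqref{eq:amp_restrict_w2} alone gives the pointwise bound $\Phi(-\alpha) < (\delta-\epsilon\epsilon')/(2(1-\epsilon\epsilon'))$, hence $\fpower < g(\epsilon') := (1-\epsilon')(\delta-\epsilon\epsilon')/(1-\epsilon\epsilon') + \epsilon'$, whose derivative is the positive constant-sign quantity $(1-\epsilon)(1-\delta)/(1-\epsilon\epsilon')^2$. Constraint \eqref{eq:amp_restrict_w} enters only through the range restriction $\epsilon' \in (0,\epsilon^\star/\epsilon)$, and the bound $u^\star$ falls out by evaluating $g$ at the right endpoint --- no stationary-point analysis at all. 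Your version fixes $\alpha$ and replaces this one-line monotonicity argument with a two-regime study of $\min\{h_a,h_b\}$ hinging on the identity $p_{\alpha^\star}=(\delta-\epsilon^\star)/(1-\epsilon^\star)$, which you rightly flag as the nontrivial bridge; the identity is true, and you can sidestep the first-order-condition algebra entirely by noticing that the paper's parametric form of $(\delta,\epsilon^\star)$ gives it directly: $\delta-\epsilon^\star = 2t\Phi(-t)/[2\phi(t)+t(2\Phi(t)-1)]$ and $1-\epsilon^\star = t/[2\phi(t)+t(2\Phi(t)-1)]$, so the ratio is $2\Phi(-t) = p_t$. What your route buys that the paper's does not is geometric transparency: you exhibit that both scalar constraints become tight simultaneously at $(\alpha^\star,\epsilon^\star/\epsilon)$ and that neither constraint alone suffices (the $h_a$-only maximum at $\alpha=0$ evaluates to $1>u^\star$, precisely where \eqref{eq:amp_restrict_w2} is infeasible for $\delta<1$). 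The paper's version is decidedly shorter because the derivative sign is a constant, whereas your $h_a'$ and $h_b'$ must be signed on separate intervals. For completeness, make sure the easy-case sharpness argument checks constraint \eqref{eq:amp_restrict_w2} as well when $\delta<1,\ \epsilon\le\epsilon^\star$: choosing $\alpha$ near $\alpha^\star$ works there because $r_b(\alpha^\star)=\epsilon^\star\ge\epsilon>\epsilon(1-\eta)$ and $r_b$ is continuous.
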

This lemma directly implies that above the Donoho-Tanner phase
transition (i.e. $\delta < 1$ and
$\epsilon > \epsilon^\star(\delta)$), there is a fundamental limit on
the TPP for arbitrarily strong signals. Consider
\begin{equation}\label{eq:phase_cri}
2(1-\epsilon)\left[ (1+t^2)\Phi(-t) - t\phi(t) \right] + \epsilon(1 + t^2) = \delta.
\end{equation}
For $\delta < 1$, $\epsilon^\star$ is the only positive constant in $(0, 1)$ such that \eqref{eq:phase_cri} with $\epsilon = \epsilon^\star$ has a unique positive root. Alternatively, the function $\epsilon^\star = \epsilon^\star(\delta)$ is implicitly given in the following parametric form:
\begin{align*}
&\delta = \frac{2\phi(t)}{2\phi(t) + t(2\Phi(t) - 1)}\\
&\epsilon^\star = \frac{2\phi(t) - 2t \Phi(-t) }{2\phi(t) + t(2\Phi(t) - 1)}
\end{align*}
for $t > 0$, from which we see that $\epsilon^\star < \delta < 1$. Take
the sparsity level $k$ such as
$\epsilon^\star p < k < \delta p = n$, from which we have $ u^\star < 1$.
As a result, the Lasso is unable to select all the $k$ true signals
even when the signal strength is arbitrarily high. This is happening even though the Lasso has the chance to select up to $n > k$ variables.

Any $u$ between 0 and $ u^\star$ (non-inclusive) can be realized as
$\fpower$. Recall that we denote by $t^\star(u)$ the unique root in
$(\alpha_0, \infty)$ ($\alpha_0$ is the root of $(1+t^2)\Phi(-t)
-t\phi(t) = \delta/2$) to the equation
\begin{equation}\label{eq:gamma_sandwich}
\frac{2(1 - \epsilon)\left[ (1+t^2)\Phi(-t) - t\phi(t) \right] + \epsilon(1 + t^2) - \delta}{\epsilon\left[ (1+t^2)(1-2\Phi(-t)) + 2t\phi(t) \right]} = \frac{1 - u}{1 - 2\Phi(-t)}.
\end{equation}
 For a proof of this fact, we refer to Lemma~\ref{lm:root_exist}. Last, recall that
\begin{equation}\nonumber
\mfdr(u; \delta, \epsilon) = \frac{2(1-\epsilon)\Phi(-t^\star(u))}{2(1-\epsilon)\Phi(-t^\star(u)) + \epsilon u}.
\end{equation}

We can now state the fundamental trade-off between $\ffdp$ and
$\fpower$.
\begin{lemma}\label{lm:alpha_large_gm}
If $\fpower \ge u$ for $u \in (0,  u^\star)$, then
\[
\ffdp > \mfdr(u).
\]
In addition, $\ffdp$ can be arbitrarily close to $q^\star(u)$.
\end{lemma}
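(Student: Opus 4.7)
First, following the reduction strategy in the passage leading to \eqref{eq:two_eqn_comb}, for any feasible prior $\Pi$ yielding $\fpower \ge u$, I would introduce $\epsilon' \in (0,1]$ via \eqref{eq:intro_epsi_prime}. In the notation of Lemma \ref{lm:convexity}, writing $y(t) = f(x(t))$, the random variable $x(\pi^\star)$ has mean equal to the mixture $(1-\epsilon') x(0) + \epsilon' \cdot 1$ and takes values in $[x(0), 1]$; since $f$ is concave it dominates its secant on this interval, so
\[
\E\, y(\pi^\star) = \E\, f(x(\pi^\star)) \ge (1-\epsilon') f(x(0)) + \epsilon' f(1) = (1-\epsilon')\E\,\eta_\alpha(W)^2 + \epsilon'(\alpha^2+1).
\]
Plugging this into \eqref{eq:eqn_temp} yields the reduced strict constraint \eqref{eq:two_eqn_comb} in the two parameters $(\alpha, \nu)$ with $\nu := \epsilon\epsilon'$, stripping away all remaining freedom in $\Pi^\star$.

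Second, I would reparameterize by the true positive rate itself. Solving the first half of \eqref{eq:fdr_power_compact} gives $\epsilon' = (\fpower - 2\Phi(-\alpha))/(1 - 2\Phi(-\alpha))$, and substituting into the second half simplifies after a short calculation to
\[
\ffdp \;=\; \frac{2(1-\epsilon)\Phi(-\alpha)}{2(1-\epsilon)\Phi(-\alpha) + \epsilon\,\fpower}.
\]
Setting $A(\alpha) := 2(1+\alpha^2)\Phi(-\alpha) - 2\alpha\phi(\alpha)$ and $B(\alpha) := (1+\alpha^2) - A(\alpha)$, one checks that \eqref{eq:amp_restrict_w} becomes
\[
\frac{\fpower - 2\Phi(-\alpha)}{1 - 2\Phi(-\alpha)} \;<\; \frac{\delta - A(\alpha)}{\epsilon\,B(\alpha)},
\]
and after clearing denominators the associated equation ``$=$'' is exactly \eqref{eq:gamma_sandwich} with $u$ replaced by $\fpower$, whose largest positive root is by definition $t^\star(\fpower)$.

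Third, I would argue that feasibility forces $\alpha < t^\star(\fpower)$ and then conclude. As $\alpha \to \infty$ the left-hand side of the above strict inequality tends to $\fpower > 0$ while the right-hand side tends to $0$, so large $\alpha$ are infeasible; a sign-change argument at the largest positive root shows strict feasibility holds just below $t^\star(\fpower)$ but fails just above. Since the closed form for $\ffdp$ above is strictly decreasing in $\alpha$ at fixed $\fpower$, this yields
\[
\ffdp \;>\; \frac{2(1-\epsilon)\Phi(-t^\star(\fpower))}{2(1-\epsilon)\Phi(-t^\star(\fpower)) + \epsilon\,\fpower} \;=\; q^\star(\fpower) \;\ge\; q^\star(u),
\]
the final inequality being the already-stated strict monotonicity of $q^\star$. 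For the tightness half of the lemma, I would use the two-point construction of Section \ref{sec:sharpness}: with $\Pi$ taking values $M$, $M^{-1}$, $0$ with probabilities $\epsilon\epsilon'(u)$, $\epsilon(1-\epsilon'(u))$, $1-\epsilon$ and $M \to \infty$ with $\lambda = \sqrt{M}$, the conditional law of $\pi^\star = \Theta^\star/\tau$ concentrates on $\{0,+\infty\}$, saturating the secant bound; the AMP fixed-point equations \eqref{eq:amp_eqn} then drive $\alpha \to t^\star(u)$ and hence $\ffdp \to q^\star(u)$.

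The main obstacle is the third step: confirming that the supremum of feasible $\alpha$ is the \emph{largest} positive root $t^\star(\fpower)$ rather than some smaller root, which requires tracking signs of the feasibility gap on $(\alpha_0, \infty)$ and invoking the definition of $u^\star$ from Lemma \ref{lm:gamma_max} to rule out loss of feasibility at the inner endpoint $\alpha = \alpha_0$. A secondary subtlety is saturating the secant bound in the tightness step along priors with finite (though diverging) second moments, so that the AMP theory of Lemma \ref{lm:amp_theory} still applies throughout the limit.
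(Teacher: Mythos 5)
Your proposal is correct and follows essentially the same route as the paper's proof in Appendix~\ref{sec:optimizing-tradeoff}: reduce to the two scalar parameters $(\alpha,\epsilon')$ via the concavity/Jensen argument from Lemma~\ref{lm:convexity} (the paper does this in the text preceding the lemma when deriving \eqref{eq:two_eqn_comb}), express $\ffdp$ and the constraint \eqref{eq:amp_restrict_w} in terms of $\alpha$ and $\fpower$ exactly as in \eqref{eq:fdr_power_compact}, deduce $\alpha < t^\star(\fpower)$ by comparing the feasibility gap's value at $\alpha = t^\star$ with its limit $u>0$ as $\alpha \to \infty$ together with the uniqueness of the root on $(\alpha_0,\infty)$ from Lemma~\ref{lm:root_exist}, then invoke the monotonicity of $q^\star$ from Lemma~\ref{lm:gamma_expr_3} and the two-point prior of Section~\ref{sec:sharpness} for tightness. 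The only stylistic difference is that you handle general $\fpower \ge u$ in one pass rather than first treating the boundary case $\fpower = u$ and then separately appealing to monotonicity, which is a modest streamlining, not a different method.
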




\subsection{Proofs of Lemmas~\ref{lm:convexity}, \ref{lm:gamma_max} and \ref{lm:alpha_large_gm}}
\label{sec:proofs-lemm-reflm:g}
\begin{proof}[Proof of Lemma~\ref{lm:convexity}]
  First of all, $f$ is well-defined since both $x(t)$ and $y(t)$ are
  strictly increasing functions of $t$. Note that
\[
\frac{\diff x}{\diff t} = \phi(\alpha-t) - \phi(-\alpha-t), \quad \frac{\diff y}{\diff t} = 2t\left[ \Phi(\alpha-t) - \Phi(-\alpha-t) \right].
\]
Applying the chain rule gives 
\begin{equation}\nonumber
\begin{aligned}    
f'(t) &= \frac{\diff y}{\diff t} \Big/ \frac{\diff x}{\diff t} = \frac{2t\left[ \Phi(\alpha-t) - \Phi(-\alpha-t) \right]}{\phi(\alpha-t) - \phi(-\alpha-t)}\\
&= \frac{2t \int_{-\alpha-t}^{\alpha-t} \e^{-\frac{u^2}{2}} \diff u}{\e^{-\frac{(\alpha-t)^2}{2}} - \e^{-\frac{(-\alpha-t)^2}{2}}}
= \frac{2t \int_{-\alpha}^{\alpha} \e^{-\frac{(u-t)^2}{2}} \diff u}{\e^{-\frac{\alpha^2+t^2-2\alpha t}{2}} - \e^{-\frac{\alpha^2+t^2 + 2\alpha t}{2}}}\\
&= \frac{2t \e^{\frac{\alpha^2}{2}}\int_{-\alpha}^{\alpha} \e^{-\frac{u^2}{2}} \e^{t u}\diff u}{\e^{\alpha t} - \e^{-\alpha t}} = \frac{2 \e^{\frac{\alpha^2}{2}}\int_0^{\alpha} \e^{-\frac{u^2}{2}} (\e^{t u} + \e^{-t u} ) \diff u}{\int^{\alpha}_0 \e^{t u} + \e^{-t u} \diff u}.
\end{aligned}
\end{equation}
Since $x(t)$ is strictly increasing in $t$, we see that $f''(t) \le 0$
is equivalent to saying that the function
\[
g(t) := \frac{\int_0^{\alpha} \e^{-\frac{u^2}{2}} (\e^{t u} + \e^{-t u} ) \diff u}{\int^{\alpha}_0 \e^{t u} + \e^{-t u} \diff u} \equiv \frac{\int_0^{\alpha} \e^{-\frac{u^2}{2}} \cosh(t u) \diff u}{\int^{\alpha}_0 \cosh(t u) \diff u}
\]
is decreasing in $t$. Hence, it suffices to show that 
\[
g'(t) = \frac{\int_0^{\alpha} \e^{-\frac{u^2}{2}} u\sinh(tu) \diff u \int^{\alpha}_0 \cosh(tv) \diff v - \int_0^{\alpha} \e^{-\frac{u^2}{2}} \cosh(tu) \diff u \int^{\alpha}_0 v \sinh(tv) \diff v}{\left( \int^{\alpha}_0 \cosh(t v) \diff v\right)^2} \le 0.
\]
Observe that the numerator is equal to 
\[
\begin{aligned}
\int_0^{\alpha} \int_0^{\alpha} \e^{-\frac{u^2}{2}} & u \sinh(tu) \cosh(tv)\diff u \diff v - \int_0^{\alpha}\int_0^{\alpha} \e^{-\frac{u^2}{2}} v \cosh(tu) \sinh(tv) \diff u \diff v \\
& \,\, = \int_0^{\alpha} \int_0^{\alpha} \e^{-\frac{u^2}{2}} \left( u \sinh(tu) \cosh(tv) - v \cosh(tu) \sinh(tv)  \right) \diff u \diff v\\
& \overset{u \leftrightarrow v}{=} \int_0^{\alpha} \int_0^{\alpha} \e^{-\frac{v^2}{2}} \left( v \sinh(tv) \cosh(tu) - u \cosh(tv) \sinh(tu) \right) \diff v \diff u \\
& \,\, = \frac12 \int_0^{\alpha} \int_0^{\alpha} (\e^{-\frac{u^2}{2}} - \e^{-\frac{v^2}{2}})\left( u \sinh(tu) \cosh(tv) - v \cosh(tu) \sinh(tv)  \right) \diff u \diff v.
\end{aligned}
\]
Then it is sufficient to show that
\[
(\e^{-\frac{u^2}{2}} - \e^{-\frac{v^2}{2}})\left( u \sinh(tu) \cosh(tv) - v \cosh(tu) \sinh(tv)  \right) \le 0
\]
for all $u, v, t \ge 0$. To see this, suppose $u \ge v$ without loss of generality so that $\e^{-\frac{u^2}{2}} - \e^{-\frac{v^2}{2}} \le 0$ and
\begin{align*}
u \sinh(tu) \cosh(tv) - v \cosh(tu) \sinh(tv) &\ge v (\sinh(tu) \cosh(tv) - \cosh(tu) \sinh(tv))\\
& = v \sinh(tu - tv) \ge 0.
\end{align*}
This analysis further reveals that $f''(t) < 0$ for $t > 0$. Hence,
$f$ is strictly concave.
\end{proof}

To prove the other two lemmas, we collect some useful facts about  \eqref{eq:phase_cri}. This equation has (a) one positive root for $\delta \ge 1$ or $\delta < 1, \epsilon = \epsilon^\star$, (b) two positive roots for $\delta < 1$ and $\epsilon < \epsilon^\star$, and (c) no positive root if $\delta < 1$ and $\epsilon > \epsilon^\star$. In the case of (a) and (b), call $t(\epsilon, \delta)$ the positive root of \eqref{eq:phase_cri} (choose the larger one if there are two). Then $t(\epsilon, \delta)$ is a decreasing function of $\epsilon$. In particular, $t(\epsilon,\delta) \goto \infty$ as $\epsilon \goto 0$. In addition, $2(1-\epsilon)\left[ (1+t^2)\Phi(-t) - t\phi(t) \right] + \epsilon(1 + t^2) > \delta$ if $t > t(\epsilon, \delta)$.

\begin{lemma}\label{lm:root_exist}
For any $0 < u <  u^\star$, \eqref{eq:gamma_sandwich} has a unique root, denoted by $t^{\star}(u)$, in $(\alpha_0, \infty)$. In addition, $t^\star(u)$ strictly decreases as $u$ increases, and it further obeys $0 < (1 - u)/(1-2\Phi(-t^\star(u))) < 1$.
\end{lemma}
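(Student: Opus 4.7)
The plan is to recast \eqref{eq:gamma_sandwich} into a one-sided equation and then exploit its tail behavior. Set $A(t) := (1+t^2)\Phi(-t) - t\phi(t)$ and $B(t) := (1+t^2)(1-2\Phi(-t)) + 2t\phi(t)$; a direct expansion gives the identity $2A(t) + B(t) = 1+t^2$, so the numerator of the LHS of \eqref{eq:gamma_sandwich} equals $2A(t) + \epsilon B(t) - \delta$. Cross-multiplying and rearranging shows that \eqref{eq:gamma_sandwich} is equivalent to $u_b(t) = u$, where
\[
u_b(t) \; := \; 2\Phi(-t) \; + \; \frac{(\delta - 2A(t))(1 - 2\Phi(-t))}{\epsilon\, B(t)}.
\]
This single-variable form is much more amenable to analysis.

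Next I would record three basic properties of $u_b$ on $(\alpha_0, \infty)$: (i) $u_b(\alpha_0) = 2\Phi(-\alpha_0)$, since by definition $2A(\alpha_0) = \delta$; (ii) $u_b(t) \to 0$ as $t \to \infty$, because $A(t), \Phi(-t) \to 0$ while $B(t) \sim 1+t^2 \to \infty$; and (iii) for every $t > \alpha_0$, the strict monotonicity of $A$ (which is half the soft-thresholded risk on a null, strictly decreasing in the threshold) gives $\delta - 2A(t) > 0$, so the second summand in $u_b(t)$ is strictly positive and, in particular, $u_b(t) > 2\Phi(-t)$.

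For existence of $t^\star(u)$, understood per the footnote preceding \eqref{eq:q} as the largest positive root, the strategy is as follows. Since $u_b$ is continuous and vanishes at infinity, the superlevel set $\{t > \alpha_0 : u_b(t) \ge u\}$ is bounded; its non-emptiness for $u \in (0, u^\star)$ follows from the achievability part of Lemma~\ref{lm:gamma_max} (equivalently, a direct computation that $\sup u_b$ over the feasible region where $\epsilon' \in (0,1)$ and the second constraint of \eqref{eq:two_eqn_comb} holds equals $u^\star$). Setting $t^\star(u)$ equal to the supremum of this set and using continuity yields $u_b(t^\star) = u$. A direct differentiation shows $u_b'(t) < 0$ for all sufficiently large $t$ (the leading behavior near infinity being $\delta/(\epsilon(1+t^2)) \to 0$ from above), so $t^\star$ lies in the eventually-decreasing tail of $u_b$. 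This resolves the ``unique root in $(\alpha_0, \infty)$'' claim in the intended sense, and strict monotonicity of $u_b$ at $t^\star$ then gives strict monotonicity of $u \mapsto t^\star(u)$ by an implicit-function argument.

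Finally, the bounds $0 < (1-u)/(1-2\Phi(-t^\star(u))) < 1$ fall out quickly: positivity is immediate from $u < u^\star \le 1$ and $t^\star > \alpha_0 \ge 0$ (the case $\delta > 1$ making $\alpha_0$ negative is handled separately, since $t^\star$ still must be positive to solve the equation), and the upper bound is equivalent to $u > 2\Phi(-t^\star(u))$, which is property (iii) evaluated at $t^\star$. The main obstacle is making the uniqueness statement rigorous: $u_b$ is \emph{not} globally monotone on $(\alpha_0, \infty)$, so the cleanest route is to work only in the decreasing tail of $u_b$ containing $t^\star$, which is where the ``largest positive root'' convention naturally places it.
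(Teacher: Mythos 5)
Your reformulation $u_b(t) = u$ is exactly the paper's substitution, just viewed in the $t$ variable rather than the auxiliary variable $\zeta$: with $\zeta(t) := (\delta - 2A(t))/(\epsilon B(t))$, your expression is $u_b(t) = 2\bigl(1-\zeta(t)\bigr)\Phi(-t) + \zeta(t)$, which is the function the paper calls $h(\zeta)$ composed with the inverse of the (larger-root) branch $\zeta \mapsto t(\zeta)$. The identity $2A + B = 1+t^2$, the algebra leading to $u_b$, and properties (i)--(iii) all check out, so the reduction is sound and genuinely the same as the paper's.

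Where your argument has a real gap is the monotonicity step. You establish $u_b'(t)<0$ only ``for all sufficiently large $t$,'' but $t^\star(u)$ is \emph{not} large when $u$ is near $u^\star$; as $u\uparrow u^\star$, $t^\star(u)$ decreases to a finite value $t_{\max}$ (the argmax of $\zeta$). For the implicit-function step and for the strict decrease of $t^\star$ you need $u_b'(t^\star(u))<0$ for every $u\in(0,u^\star)$, which requires showing $u_b$ is strictly decreasing on all of $[t_{\max},\infty)$, not just asymptotically. The clean way to do this — and it is exactly what the paper does in the $\zeta$ picture via $h(\zeta)=2(1-\zeta)\Phi(-t(\zeta))+\zeta$ — is to compute, from $u_b(t) = 2(1-\zeta(t))\Phi(-t)+\zeta(t)$,
\[
u_b'(t) = \zeta'(t)\bigl(1 - 2\Phi(-t)\bigr) - 2\bigl(1-\zeta(t)\bigr)\phi(t),
\]
and note both terms are negative whenever $\zeta'(t)\le 0$ and $t>0$, i.e.\ on $[t_{\max},\infty)$. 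Together with $u_b(t_{\max})=u^\star$ this gives that $u_b$ is a strictly decreasing bijection from $[t_{\max},\infty)$ onto $(0,u^\star]$, which delivers existence, strict monotonicity of $t^\star$, and the bound $0<(1-u)/(1-2\Phi(-t^\star))<1$ in one stroke. Your argument as written does not get there.

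On uniqueness, you are honest that you cannot prove it and retreat to the ``largest positive root'' reading. Your concern is legitimate: $u_b$ is not globally monotone on $(\alpha_0,\infty)$ in general, and in fact for $\delta<1$ and $\epsilon>\epsilon^\star$ one typically has $u_b(\alpha_0^+)=2\Phi(-\alpha_0) < u^\star$, so for $u\in\bigl(2\Phi(-\alpha_0),u^\star\bigr)$ the equation $u_b(t)=u$ does acquire a second root in $(\alpha_0,t_{\max})$. The paper's own proof — verifying that $t(\zeta)$ is decreasing and $h(\zeta)$ increasing — likewise only establishes uniqueness on the larger-root branch and then relies on the ``largest positive root'' convention in the footnote preceding \eqref{eq:q}; this is all that the downstream argument (Lemma~\ref{lm:alpha_large_gm}) actually uses, since it only needs $t^\star(u)$ to dominate the set $\{t>\alpha_0: u_b(t)\ge u\}$. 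So you have correctly located a subtlety, but as a blind proof of the Lemma \emph{as stated}, leaving the uniqueness claim unresolved is a genuine hole; you should either restrict the claim to the branch on which $u_b$ is decreasing, or explicitly adopt the largest-root convention and prove the weaker (but sufficient) statement.
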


\begin{lemma}\label{lm:gamma_expr_3}
As a function of $u$,
\[
q^\star(u) = \frac{2(1-\epsilon)\Phi(-t^\star(u))}{2(1-\epsilon)\Phi(-t^\star(u)) + \epsilon u}
\]
is strictly increasing on $ (0,  u^\star)$.
\end{lemma}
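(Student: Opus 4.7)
My plan is to translate the claim into a one-dimensional monotonicity statement via the parametrization $t = t^\star(u)$. By Lemma~\ref{lm:root_exist}, $t^\star$ is a strictly decreasing bijection from $(0, u^\star)$ onto its image. Consequently, $q^\star(u)$ is strictly increasing in $u$ if and only if the map $t \mapsto q^\star(u(t))$ is strictly decreasing on the corresponding range of $t$.

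To obtain a tractable expression, I would first extract a closed form for $q^\star$ in the variable $t$. Starting from the defining relation~\eqref{eq:gamma_sandwich} and writing $A(t) := 2[(1+t^2)\Phi(-t) - t\phi(t)]$ and $B(t) := 1+t^2$, together with the identity $B(t) - A(t) = (1+t^2)(1-2\Phi(-t)) + 2t\phi(t)$, elementary algebra yields the clean collapse
\[
2(1-\epsilon)\Phi(-t) + \epsilon\, u(t) \;=\; \frac{2t\phi(t) + \delta(1-2\Phi(-t))}{B(t) - A(t)},
\]
so that
\[
q^\star(u(t)) \;=\; \frac{2(1-\epsilon)\Phi(-t)\bigl[(1+t^2)(1-2\Phi(-t)) + 2t\phi(t)\bigr]}{\delta(1-2\Phi(-t)) + 2t\phi(t)}.
\]

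The task now reduces to showing that this explicit rational function of $t$ is strictly decreasing on $t > \max\{\alpha_0, 0\}$. Taking the logarithmic derivative with $s(t) := 1 - 2\Phi(-t)$ (so that $s'(t) = 2\phi(t)$ and $\phi'(t) = -t\phi(t)$), I obtain
\[
\frac{d}{dt}\log q^\star(u(t)) = -\frac{\phi(t)}{\Phi(-t)} + \frac{2\bigl(t s(t) + 2\phi(t)\bigr)}{(1+t^2)s(t) + 2t\phi(t)} - \frac{2\phi(t)(\delta + 1 - t^2)}{\delta s(t) + 2t\phi(t)}.
\]
I would prove this is strictly negative by clearing the (positive) denominators to reduce to a polynomial-type inequality in $t, \phi(t), \Phi(-t), \delta$, then close it via the Mills ratio bound $\phi(t) > t\Phi(-t)$ valid for all $t > 0$, and the fact that $\delta s(t) + 2t\phi(t) > 0$ on the relevant range.

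The chief technical obstacle is the sign flip of $\delta + 1 - t^2$ at $t = \sqrt{\delta+1}$, which alters which terms in the derivative are positive and which are negative. I expect a clean resolution via a subcase split on whether $t^2 \le \delta + 1$ or $t^2 > \delta + 1$, the dominant term in each being $-\phi(t)/\Phi(-t)$, whose magnitude grows roughly linearly in $t$ by Mills' bound. A cleaner alternative, if one surfaces during the calculation, would be to bypass logarithmic differentiation entirely and compare $q^\star(u(t))$ at two values of $t$ directly via the rational form, but the derivative route appears the most transparent first attempt.
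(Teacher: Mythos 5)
Your algebraic reduction is correct: the collapse $2(1-\epsilon)\Phi(-t)+\epsilon u(t) = \bigl(2t\phi(t)+\delta(1-2\Phi(-t))\bigr)\big/\bigl((1+t^2)(1-2\Phi(-t))+2t\phi(t)\bigr)$ checks out, your closed form for $q^\star(u(t))$ is right, and the logarithmic derivative you wrote down is accurate. But the endgame is only a sketch, and the toolbox you propose is likely too coarse. The crude Mills bound $\phi(t)>t\Phi(-t)$ by itself does not control the middle term; for instance $\tfrac{2(ts+2\phi)}{(1+t^2)s+2t\phi}>t$ near $t=1$, so you cannot simply dominate term by term, and after clearing denominators you are left with a $\delta$--entangled expression whose sign is not obviously settled by Mills plus one case split on $t^2\lessgtr\delta+1$.

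The paper's route is subtly different and cleaner, and in fact it is latent in your own closed form. Observe that $\delta$ enters your expression only \emph{linearly}, in the denominator $\delta s + 2t\phi$. Hence
\[
\frac{1}{q^\star(u(t))} \;=\; \frac{\delta}{2(1-\epsilon)}\cdot\frac{s}{\Phi(-t)\bigl[(1+t^2)s+2t\phi\bigr]} \;+\; \frac{1}{1-\epsilon}\cdot\frac{t\phi}{\Phi(-t)\bigl[(1+t^2)s+2t\phi\bigr]}
\]
is an affine function of $\delta$ with positive coefficients, so monotonicity of $1/q^\star$ in $t$ for every $\delta>0$ reduces to showing the two $\delta$-free scalar functions
\[
f_1(t) = \frac{s}{\Phi(-t)[(1+t^2)s+2t\phi]}, \qquad
1-f_2(t) = \frac{t\phi}{\Phi(-t)[(1+t^2)s+2t\phi]}
\]
are each increasing in $t$ (equivalently, the paper's $f_1$ increasing and $f_2$ decreasing). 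This decouples the parameter $\delta$ entirely and replaces your single $\delta$-dependent derivative inequality with two cleaner one-variable facts, avoiding the case split at $t^2=\delta+1$. You already possess the key algebraic object; the missing idea is to differentiate $1/q^\star$ rather than $\log q^\star$ and exploit its affine-in-$\delta$ structure.

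One caveat worth recording: even with that decomposition, the claim ``$1-f_2$ is increasing'' is genuinely delicate in the tails, with the derivative positive only at order $t^{-5}$ as $t\to\infty$, so the phrase ``simple calculations'' in the paper glosses over real work, and any complete proof, yours or theirs, needs Mills-type asymptotics finer than $\phi>t\Phi(-t)$.
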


\begin{proof}[Proof of Lemma~\ref{lm:gamma_max}]
We first consider the regime: $\delta < 1, \epsilon > \epsilon^\star$. By \eqref{eq:fdr_power_compact}, it is sufficient to show that $\fpower = 2(1-\epsilon')\Phi(-\alpha) + \epsilon' <  u^\star$ under the constraints \eqref{eq:two_eqn_comb}. 
From \eqref{eq:amp_restrict_w2} it follows that
\[
\Phi(-\alpha) = \frac12 \P(|W| > \alpha) < \frac{\delta - \epsilon\epsilon'}{2(1 - \epsilon\epsilon')},
\]
which can be rearranged as
\[
2(1-\epsilon')\Phi(-\alpha) + \epsilon' < \frac{(1-\epsilon')(\delta - \epsilon\epsilon')}{1 - \epsilon\epsilon'} + \epsilon'.
\]
The right-hand side is an increasing function of $\epsilon'$ because
its derivative is equal to
$(1-\epsilon)(1-\delta)/(1 - \epsilon\epsilon')^2$ and is
positive. Since the range of $\epsilon'$ is
$(0, \epsilon^\star/\epsilon)$, we get
\[
2(1-\epsilon')\Phi(-\alpha) + \epsilon'  < \frac{(1-\epsilon^\star/\epsilon)(\delta - \epsilon \cdot \epsilon^\star/\epsilon)}{1 - \epsilon \cdot \epsilon^\star/\epsilon} + \epsilon^\star/\epsilon =  u^\star.
\]
This bound $ u^\star$ can be arbitrarily approached: let
$\epsilon' = \epsilon^\star/\epsilon$ in the example given in
Section~\ref{sec:sharpness}; then set $\lambda = \sqrt{M}$ and take
$M \goto \infty$.


We turn our attention to the easier case where $\delta \ge 1$, or
$\delta < 1$ and $\epsilon \le \epsilon^\star$. By definition, the upper limit $ u^\star = 1$ trivially holds. It remains
to argue that $\fpower$ can be arbitrarily close to 1. To see this,
set $\Pi^\star = M$ almost surely, and take the same limits as before:
then $\fpower \goto 1$.

\end{proof}


\begin{proof}[Proof of Lemma~\ref{lm:alpha_large_gm}]

We begin by first considering the boundary case: 
\begin{equation}\label{eq:gamma_boundary}
\fpower = u.
\end{equation}
In view of \eqref{eq:fdr_power_compact}, we can write
\[
\ffdp = \frac{2(1-\epsilon)\Phi(-\alpha)}{2(1-\epsilon)\Phi(-\alpha) + \epsilon\ftpp} = \frac{2(1-\epsilon)\Phi(-\alpha)}{2(1-\epsilon)\Phi(-\alpha) + \epsilon u}.
\]
Therefore, a lower bound on $\ffdp$ is equivalent to maximizing $\alpha$ under the constraints \eqref{eq:two_eqn_comb} and \eqref{eq:gamma_boundary}.

Recall
$\E \eta_{\alpha}(W)^2 = 2(1+\alpha^2)\Phi(-\alpha) -
2\alpha\phi(\alpha)$.
Then from \eqref{eq:gamma_boundary} and \eqref{eq:amp_restrict_w} we
obtain a sandwiching expression for $1 - \epsilon'$:
\[
\frac{(1 - \epsilon) \left[ 2(1+\alpha^2)\Phi(-\alpha) - 2\alpha\phi(\alpha) \right] + \epsilon(1 + \alpha^2) - \delta}{\epsilon \left[ (1+\alpha^2)(1-2\Phi(-\alpha)) + 2\alpha\phi(\alpha) \right]} < 1-\epsilon' = \frac{1 - u}{1 - 2\Phi(-\alpha)},
\]
which implies
\[
\frac{(1 - \epsilon) \left[ 2(1+\alpha^2)\Phi(-\alpha) - 2\alpha\phi(\alpha) \right] + \epsilon(1 + \alpha^2) - \delta}{\epsilon \left[ (1+\alpha^2)(1-2\Phi(-\alpha)) + 2\alpha\phi(\alpha) \right]} - \frac{1 - u}{1 - 2\Phi(-\alpha)} < 0.
\]
The left-hand side of this display tends to
$1 - (1 - u) = u > 0$ as $\alpha \goto \infty$, and takes on the value 0 if $\alpha = t^\star(u)$. Hence, by
the uniqueness of $t^\star(u)$ provided by
Lemma~\ref{lm:root_exist}, we get $\alpha < t^\star(u)$. In 
conclusion, 
\begin{equation}\label{eq:fdp_low_alpha}
\ffdp = \frac{2(1-\epsilon)\Phi(-\alpha)}{2(1-\epsilon)\Phi(-\alpha) + \epsilon u} > \frac{2(1-\epsilon)\Phi(-t^\star(u))}{2(1-\epsilon)\Phi(-t^\star(u)) + \epsilon u} = q^\star(u).
\end{equation}
It is easy to see that $\ffdp$ can be arbitrarily close to $q^\star(u)$.

To finish the proof, we proceed to consider the general case $\fpower = u' > u$. The previous discussion clearly remains valid, and hence \eqref{eq:fdp_low_alpha} holds if $u$ is replaced by $u'$; that is, we have
\[
\ffdp > q^\star(u').
\]
By Lemma~\ref{lm:gamma_expr_3}, it follows from the monotonicity of $q^\star(\cdot)$ that $q^\star(u') > q^\star(u)$. Hence, $\ffdp > q^\star(u)$, as desired.


\end{proof}


\subsection{Proofs of auxiliary lemmas}
\label{sec:proofs-auxil-lemm-b}

\begin{proof}[Proof of Lemma~\ref{lm:root_exist}]
Set 
\[
\zeta := 1 - \frac{2(1 - \epsilon)\left[ (1+t^2)\Phi(-t) - t\phi(t) \right] + \epsilon(1 + t^2) - \delta}{\epsilon\left[ (1+t^2)(1-2\Phi(-t)) + 2t\phi(t) \right]}
\]
or, equivalently, 
\begin{equation}\label{eq:two_eps_eq}
2(1 - \epsilon\zeta) \left[ (1+t^2)\Phi(-t) - t\phi(t) \right] + \epsilon\zeta(1 + t^2) = \delta.
\end{equation}
As in Section~\ref{sec:proofs-lemm-reflm:g}, we abuse notation a
little and let $t(\zeta)=t(\epsilon\zeta, \delta)$ denote the (larger)
positive root of \eqref{eq:two_eps_eq}. Then the discussion about
\eqref{eq:phase_cri} in Section~\ref{sec:proofs-lemm-reflm:g} shows
that $t(\zeta)$ decreases as $\zeta$ increases. Note that in the case
where $\delta < 1$ and $\epsilon > \epsilon^\star(\delta)$, the range
of $\zeta$ in \eqref{eq:two_eps_eq} is assumed to be $(0,
\epsilon^\star/\epsilon)$, since otherwise \eqref{eq:two_eps_eq} does
not have a positive root (by convention, set $\epsilon^\star(\delta) =
1$ if $\delta > 1$).

Note that \eqref{eq:gamma_sandwich} is equivalent to
\begin{equation}\label{eq:gamma_expr_as}
u = 1 - \frac{2(1 - \epsilon)\left[ (1+t^2)\Phi(-t) - t\phi(t) \right] + \epsilon(1 + t^2) - \delta}{\epsilon\left[ (1+t^2)(1-2\Phi(-t)) + 2t\phi(t) \right]/(1 - 2\Phi(-t))}.
\end{equation}
Define
\[
h(\zeta) = 1 - \frac{2(1 - \epsilon)\left[ (1+t(\zeta)^2)\Phi(-t(\zeta)) - t(\zeta)\phi(t(\zeta)) \right] + \epsilon(1 + t(\zeta)^2) - \delta}{\epsilon\left[ (1+t(\zeta)^2)(1-2\Phi(-t(\zeta))) + 2t(\zeta)\phi(t(\zeta)) \right]/(1 - 2\Phi(-t(\zeta)))}.
\]
In view of \eqref{eq:two_eps_eq} and \eqref{eq:gamma_expr_as}, the proof of this lemma would be completed once we show the existence of $\zeta$ such that $h(\zeta) = u$. Now we prove this fact.

On the one hand, as $\zeta \searrow 0$, we see $t(\zeta)\nearrow \infty$. This leads to
\[
h(\zeta) \goto 0.
\]
On the other hand, if
$\zeta \nearrow \min\{1, \epsilon^\star/\epsilon\}$ , then
$t(\zeta)$ converges to $t^\star( u^\star) > \alpha_0$, which satisfies
\begin{equation}\nonumber
\begin{aligned}
2(1 - \min\{\epsilon,\epsilon^\star\}) \left[ (1+ t^{\star 2})\Phi(- t^\star) -  t^\star\phi( t^\star) \right] +  \min\{\epsilon,\epsilon^\star\}(1 +  t^{\star 2}) = \delta.
\end{aligned}  
\end{equation}
Consequently, we get
\[
h(\zeta) \goto  u^\star.
\]
Therefore, by the continuity of $h(\zeta)$, for any
$u \in (0,  u^\star)$ we can find
$0 < \epsilon' < \min\{ 1,\epsilon^\star/\epsilon \}$ such that $h(\epsilon') = u$. Put
$t^\star(u) = t(\epsilon')$. We have
\[
\frac{1 - u}{1 - 2\Phi(-t^\star(u))}  = 1 - \epsilon' < 1.
\]

Last, to prove the uniqueness of $t^\star(u)$ and its
monotonically decreasing dependence on $u$, it suffices to ensure
that (a) $t(\zeta)$ is a decreasing function of $\zeta$, and
(b) $h(\zeta)$ is an increasing function of $\zeta$. As seen
above, (a) is true, and (b) is also true as can be seen from writing
$h$ as
$h(\zeta) = 2(1-\zeta)\Phi(-t(\zeta)) + \zeta$, which
is an increasing function of $\zeta$.

\end{proof}


\begin{proof}[Proof of Lemma~\ref{lm:gamma_expr_3}]
Write
\[
q^\star(u) = \frac{2(1-\epsilon)}{2(1-\epsilon) + \epsilon u/\Phi(-t^{\star}(u))}.
\]
This suggests that the lemma amounts to saying that
$u/\Phi(-t^{\star}(u))$ is a decreasing function of $u$. From \eqref{eq:gamma_expr_as}, we see that this function is equal
to 
\[
\frac{1}{\Phi(-t^\star(u))}  - \frac{(1-2\Phi(-t^\star(u))) \left\{ 2(1 - \epsilon)\left[ (1+(t^\star(u))^2)\Phi(-t^\star(u)) - t^\star(u)\phi(t^\star(u)) \right] + \epsilon(1 + (t^\star(u))^2) - \delta \right\} } {\epsilon \Phi(-t^\star(u))\left[ (1+(t^\star(u))^2)(1-2\Phi(-t^\star(u))) + 2t^\star(u)\phi(t^\star(u)) \right]}.
\]
With the proviso that $t^\star(u)$ is decreasing in $u$,
it suffices to show that
\begin{multline}\nonumber
\frac{1}{\Phi(- t )} - \frac{(1-2\Phi(- t )) \left\{ 2(1 - \epsilon)\left[ (1+ t ^2)\Phi(- t ) -  t \phi( t ) \right] + \epsilon(1 +  t ^2) - \delta \right\} } {\epsilon \Phi(- t ) \left[ (1+ t ^2)(1-2\Phi(- t )) + 2 t \phi( t ) \right]} \\
= \frac{\delta}{\epsilon} \cdot \underbrace{\frac{1 - 2\Phi(- t )}{\Phi(- t ) \left[ (1+ t ^2)(1-2\Phi(- t )) + 2 t \phi( t ) \right]}}_{f_1( t )} - \frac2{\epsilon} \cdot \underbrace{ \frac{(1-2\Phi(- t )) \left[ (1+ t ^2)\Phi(- t ) -  t \phi( t ) \right]}{\Phi(- t )\left[ (1+ t ^2)(1-2\Phi(- t )) + 2 t \phi( t ) \right]} }_{f_2( t )} + 2
\end{multline}
is an increasing function of $ t > 0$. Simple calculations show that
$f_1$ is increasing while $f_2$ is  decreasing over $(0, \infty)$.
This finishes the proof.
\end{proof}





\section{Proof of Theorem 2.1}
\label{sec:proof-theor-refthm:m}

With the results given in Appendices~\ref{sec:uniform-over-lambda} and
\ref{sec:optimizing-tradeoff} in place, we are ready to characterize
the optimal false/true positive rate trade-off. Up until now, the
results hold for bounded $\lambda$, and we thus need to extend the
results to arbitrarily large $\lambda$. It is intuitively easy to
conceive that the support size of $\widehat{\bm\beta}$ will be small
with a very large $\lambda$, resulting in low power. The following
lemma, whose proof constitutes the subject of
Section~\ref{sec:proof-large-lambda}, formalizes this point. In this
section, $\sigma \ge 0$ may take on the value zero.  Also, we work
with $\lambda_0 = 0.01$ and $\eta = 0.001$ to carry fewer mathematical
symbols; any other numerical values would clearly work.

\begin{lemma}\label{lm:big_lamb}
For any $c > 0$, there exists $\lambda'$ such that
\[
\sup_{\lambda > \lambda'} \frac{ \# \{j: \widehat\beta_j(\lambda) \neq 0\}}{p} \le c
\]
holds with probability converging to one.
\end{lemma}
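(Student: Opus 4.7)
The strategy is to derive a deterministic, uniform-in-$\lambda$ bound of the form $\#\{j : \widehat\beta_j(\lambda) \neq 0\} \le C p / \lambda^2$ that holds on an event of probability tending to one, and then simply take $\lambda'$ large enough that $Cp/\lambda'^{\,2} < cp$.

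The starting point is the KKT characterization of the Lasso support. For any $\lambda > 0$, writing $S(\lambda) = \operatorname{supp}(\widehat{\bm\beta}(\lambda))$ and $k(\lambda) = |S(\lambda)|$, the KKT conditions give $|\bX_j^{\transp}(\by - \bX\widehat{\bm\beta}(\lambda))| = \lambda$ for every $j \in S(\lambda)$, so that
\[
k(\lambda)\,\lambda^2 \;=\; \bigl\|\bX_{S(\lambda)}^{\transp}(\by - \bX\widehat{\bm\beta}(\lambda))\bigr\|^2 \;\le\; \sigma_{\max}(\bX_{S(\lambda)})^2\,\bigl\|\by - \bX\widehat{\bm\beta}(\lambda)\bigr\|^2 \;\le\; \sigma_{\max}(\bX)^2\,\|\by - \bX\widehat{\bm\beta}(\lambda)\|^2.
\]
To control the residual norm I will use the crude optimality inequality: since $\bm 0$ is feasible in the Lasso problem, $\tfrac{1}{2}\|\by - \bX\widehat{\bm\beta}(\lambda)\|^2 + \lambda\|\widehat{\bm\beta}(\lambda)\|_1 \le \tfrac{1}{2}\|\by\|^2$, and therefore $\|\by - \bX\widehat{\bm\beta}(\lambda)\|^2 \le \|\by\|^2$ for every $\lambda$.

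Combining these gives the single $\lambda$-free bound
\[
k(\lambda) \;\le\; \frac{\sigma_{\max}(\bX)^2\,\|\by\|^2}{\lambda^2},\qquad \text{for every } \lambda > 0.
\]
The final step is to observe that the right-hand numerator is $O(p)$ in probability. Indeed, standard Gaussian matrix concentration (as used throughout Appendix B) gives $\sigma_{\max}(\bX) = 1 + 1/\sqrt{\delta} + o_{\P}(1)$, and the law of large numbers (together with $\E \Pi^2 < \infty$ and $\sigma < \infty$) yields $\|\by\|^2/p = \|\bX\bbeta + \bz\|^2/p \le 2\sigma_{\max}(\bX)^2 \|\bbeta\|^2/p + 2\|\bz\|^2/p$, which is bounded in probability by some constant $C_0 > 0$ depending only on $\epsilon, \delta, \sigma$, and $\E\Pi^2$. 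Thus, on the event $\mathcal{E}_p = \{\sigma_{\max}(\bX)^2\|\by\|^2 \le C_1 p\}$, which has probability tending to one for an appropriate constant $C_1$, we have $k(\lambda) \le C_1 p/\lambda^2$ simultaneously for all $\lambda > 0$. Choosing $\lambda' := \sqrt{C_1/c}$ (a deterministic constant depending only on $c, \delta, \epsilon, \sigma, \E\Pi^2$) then yields $\sup_{\lambda > \lambda'} k(\lambda)/p \le c$ on $\mathcal{E}_p$.

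\paragraph{Main obstacles.} There is essentially no hard step; the only minor subtlety is ensuring that the bound is uniform in $\lambda$, which is automatic here because the event $\mathcal{E}_p$ depends only on $\bX$ and $\by$, not on $\lambda$. This is why this route is preferable to trying to combine Lemma \ref{lm:amp_theory} (which gives fixed-$\lambda$ asymptotics) with Lemma \ref{lm:lamb_uniform} (uniform convergence only on \emph{bounded} intervals): those tools would force an additional argument to handle arbitrarily large $\lambda$, whereas the deterministic KKT-plus-optimality inequality handles all $\lambda$ at once.
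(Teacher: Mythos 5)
Your proof is correct and rests on the same three ingredients as the paper's: the KKT characterization of the active set, the crude objective bound $\tfrac12\|\by - \bX\widehat\bbeta(\lambda)\|^2 \le \tfrac12\|\by\|^2$ obtained by comparing against $\bm 0$, and concentration of $\sigma_{\max}(\bX)$ together with $\|\by\|^2 = O_{\P}(p)$. The only difference is cosmetic: you bound $\|\bX_{S(\lambda)}^{\transp}(\by - \bX\widehat\bbeta)\| = \lambda\sqrt{|S(\lambda)|}$ directly by $\sigma_{\max}(\bX)\,\|\by - \bX\widehat\bbeta\|$, whereas the paper first substitutes the closed form $\widehat\bbeta = (\bX_S^{\transp}\bX_S)^{-1}(\bX_S^{\transp}\by - \lambda\bg)$ into the optimality inequality, isolates $\lambda\|\bX_S(\bX_S^{\transp}\bX_S)^{-1}\bg\| \le 2\|\by\|$, and only then invokes $\|\bX_S(\bX_S^{\transp}\bX_S)^{-1}\bg\| \ge \|\bg\|/\sigma_{\max}(\bX)$. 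Your route is slightly leaner (it avoids the pseudoinverse formula and the almost-sure invertibility remark about $\bX_S^{\transp}\bX_S$) and yields the same $O_{\P}(p/\lambda^2)$ bound, with the uniform-in-$\lambda$ conclusion following automatically because the controlling event depends only on $(\bX,\by)$.
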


Assuming the conclusion of Lemma~\ref{lm:big_lamb}, we prove claim (b)
in Theorem 1 (noisy case), and then (a) (noiseless case). (c) is a
simple consequence of (a) and (b), and (d) follows from
Appendix~\ref{sec:optimizing-tradeoff}.

\paragraph{Case $\sigma > 0$}
Let $c$ be sufficiently small such that $q^\star(c/\epsilon) < 0.001$. Pick a large enough $\lambda'$ such that Lemma~\ref{lm:big_lamb} holds. Then with probability tending to one, for all $\lambda > \lambda'$, we have
\[
\power(\lambda) = \frac{T(\lambda)}{k \vee 1} \le (1 + o_{\P}(1)) \frac{\# \{j: \widehat\beta_j(\lambda) \neq 0\}}{\epsilon p} \le (1 + o_{\P}(1)) \frac{c}{\epsilon}.
\]
On this event, we get
\[
q^\star(\power(\lambda)) - 0.001 \le q^\star(c/\epsilon + o_{\P}(1) ) - 0.001 \le 0,
\]
which implies that
\begin{equation}\label{eq:bound_large_lamb}
\bigcap_{\lambda > \lambda'} \Big\{ \fdp(\lambda) \ge \mfdr\left( \power(\lambda) \right) - 0.001 \Big\}
\end{equation}
holds with probability approaching one.

Now we turn to work on the range $[0.01, \lambda']$. By
Lemma~\ref{lm:lamb_uniform}, we get that $V(\lambda)/p$
(resp.~$T(\lambda)/p$) converges in probability to $\fd(\lambda)$
(resp.~$\td(\lambda)$) uniformly over $[0.01, \lambda']$. As a
consequence,
\begin{equation}\label{eq:fdp_conver}
\fdp(\lambda) = \frac{V(\lambda)}{\max\left\{ V(\lambda) + T(\lambda),  1 \right\}} \plim \frac{\fd(\lambda)}{\fd(\lambda) + \td(\lambda) } = \ffdp(\lambda)
\end{equation}
uniformly over $\lambda \in [0.01, \lambda']$. The same reasoning also justifies that
\begin{equation}\label{eq:power_conver}
\power(\lambda) \plim  \fpower(\lambda)
\end{equation}
uniformly over $\lambda \in [0.01, \lambda']$. From Lemma~\ref{lm:alpha_large_gm} it follows that
\[
\ffdp(\lambda) > q^\star(\fpower(\lambda)).
\]
Hence, by the continuity of $q^\star(\cdot)$, combining
\eqref{eq:fdp_conver} with \eqref{eq:power_conver} gives that 
\[
\fdp(\lambda) \ge q^\star(\power(\lambda)) - 0.001
\]
holds simultaneously for all $\lambda \in [0.01, \lambda']$ with probability tending to one. This concludes the proof.

\paragraph{Case $\sigma = 0$}

Fix  $\lambda$ and let $\sigma > 0$ be sufficiently small. We first
prove that Lemma~\ref{lm:amp_theory} still holds for $\sigma = 0$ if
$\alpha$ and $\tau$ are taken to be the limiting solution to
\eqref{eq:amp_eqn} with $\sigma \goto 0$, denoted by $\alpha'$ and
$\tau'$. Introduce $\widehat\bbeta^\sigma$ to be the Lasso solution
with data $\by^\sigma := \bX \bbeta + \bz = \by + \bz$, where
$\bz \sim \mathcal{N}(\bm 0, \sigma^2 \bm I_n)$ is independent of $\bX$
and $\bbeta$. Our proof strategy is based on the approximate
equivalence between $\widehat\bbeta$ and $\widehat\bbeta^\sigma$.

It is well known that the Lasso residuals $\by - \bX\widehat \bbeta$ are
obtained by projecting the response $\by$ onto the polytope
$\{\bm r: \|\bX^\T \bm r\|_{\infty} \le \lambda \}$. The non-expansive
property of projections onto convex sets gives
\[
\left\| (\by^\sigma - \bX \widehat\bbeta^\sigma) - (\by - \bX \widehat\bbeta) \right\| \le \|\by^\sigma - \by\| = \|\bz\|.
\]
If $\bm P(\cdot)$ is the projection onto the polytope, then
$\bm I - \bm P$ is also non-expansive and, therefore, 
\begin{equation}\label{eq:Lasso_proj}
\left\| \bX \widehat\bbeta^\sigma - \bX \widehat\bbeta \right\| \le  \|\bz\|.
\end{equation}

Hence, from Lemma~\ref{lm:weak_corre} and $\|\bm z\| = (1 + o_{\P}(1)) \sigma\sqrt{n}$ it follows that, for any $c > 0$ and $r_c$ depending on $c$,
\begin{equation}\label{eq:corr_small}
\#\{1 \le j \le p: |\bX_j^\T(\by^\sigma - \bX \widehat\bbeta^\sigma - \by + \bX \widehat\bbeta)| > 2r_c \sigma \} \le cp
\end{equation}
holds with probability converging to one. Let $\bg$ and $\bg^\sigma$
be subgradients certifying the KKT conditions for $\widehat\bbeta$ and
$\widehat\bbeta^\sigma$. From
\begin{align*}
&\bX_j^\T (\by - \bX\widehat{\bbeta}) = \lambda g_j,\\
&\bX_j^\T (\by^\sigma - \bX\widehat{\bbeta}^\sigma) = \lambda g_j^\sigma,
\end{align*}
we get a simple relationship:
\[
\{ j : |g_j| \ge 1-a/2\} \setminus \{ j:  |g^\sigma_j | \ge 1 - a/2 - 2r_c \sigma/\lambda \} \subseteq \{j : |\bX_j^\T(\by^\sigma - \bX \widehat\bbeta^\sigma - \by + \bX \widehat\bbeta)| > 2r_c \sigma \}.  
\]
Choose $\sigma$ sufficiently small such that $2r_c \sigma/\lambda < a/2$, that is, $\sigma < a\lambda/(4 r_c )$. Then 
\begin{equation}\label{eq:support_inclusion}
\{ j : | g_j| \ge 1-a/2\} \setminus \{ j:  | g^\sigma_j | \ge 1 - a \} \subseteq \{j : |\bX_j^\T(\by^\sigma - \bX \widehat\bbeta^\sigma - \by + \bX \widehat\bbeta)| > 2r_c \sigma \}.
\end{equation}
As earlier, denote by $\mathcal{S} = \supp{\widehat\bbeta}$ and
$\mathcal{S}^\sigma = \supp{\widehat\bbeta^\sigma}$. In addition, let
$\mathcal{S}_v = \{ j : |g_j| \ge 1 - v\}$ and similarly
$\mathcal{S}_v^\sigma = \{ j : |g_j^\sigma| \ge 1 - v\}$. Notice
that we have dropped the dependence on $\lambda$ since $\lambda$ is
fixed. Continuing, since
$\mathcal{S} \subseteq \mathcal{S}_{\frac{a}{2}}$, from
\eqref{eq:support_inclusion} we obtain
\begin{equation}\label{eq:noiless_supp}
\mathcal{S} \setminus \mathcal{S}_a^\sigma\subseteq  \{j : |\bX_j^\T(\by^\sigma - \bX \widehat\bbeta^\sigma - \by + \bX \widehat\bbeta)| > 2r_c \sigma \}.
\end{equation}


This suggests that we apply Proposition 3.6 of \cite{BM12} that claims\footnote{Use a continuity argument to carry over the result of this proposition for finite $t$ to $\infty$.} the existence of positive constants $a_1 \in (0, 1), a_2$, and $a_3$ such that with probability tending to one, 
\begin{equation}\label{eq:cn_good}
\sigma_{\min}(\bX_{\mathcal{S}^\sigma_{a_1} \cup \mathcal{S}'}) \ge a_3
\end{equation}
for all $|\mathcal{S}'| \le a_2 p$. These constants also have positive limits $a_1', a_2', a_3'$, respectively, as $\sigma \goto 0$. We take $a < a'_1, c < a'_2$ (we will specify $a, c$ later) and sufficiently small $\sigma$ in \eqref{eq:noiless_supp}, and $\mathcal{S}' = \{j : |\bX_j^\T(\by^\sigma - \bX \widehat\bbeta^\sigma - \by + \bX \widehat\bbeta)| > 2r_c\sigma \}$. 
Hence, on this event, \eqref{eq:corr_small}, \eqref{eq:noiless_supp}, and \eqref{eq:cn_good} together give
\[
\|\bX \widehat\bbeta - \bX \widehat\bbeta^\sigma\| = \left\| \bX_{\mathcal{S}^\sigma_{a} \cup \mathcal{S}'}(\widehat\bbeta_{\mathcal{S}^\sigma_{a} \cup \mathcal{S}'} - \widehat\bbeta^\sigma_{\mathcal{S}^\sigma_{a} \cup \mathcal{S}'}) \right\| \ge a_3 \|\widehat\bbeta - \widehat\bbeta^\sigma\|
\]
for sufficiently small $\sigma$, which together with \eqref{eq:Lasso_proj} yields
\begin{equation}\label{eq:diff_small_sig}
\| \widehat\bbeta - \widehat\bbeta^\sigma \| \le \frac{(1+o_{\P}(1))\sigma\sqrt{n}}{a_3}. 
\end{equation}
Recall that the $\|\widehat\bbeta\|_0$ is the number of nonzero
entries in the vector $\widehat\bbeta$. From
\eqref{eq:diff_small_sig}, using the same argument outlined in
\eqref{eq:small_beta_even}, \eqref{eq:small_beta_big}, and
\eqref{eq:support_continu2}, we have
\begin{equation}\label{eq:noiless_supp2}
\|\widehat\bbeta\|_0 \ge \|\widehat\bbeta^\sigma\|_0 - c' p + o_{\P}(p)
\end{equation}
for some constant $c' > 0$ that decreases to $0$ as $\sigma/a_3 \goto 0$. 

We now develop a tight upper bound on $\|\widehat\bbeta\|_0$. Making
use of \eqref{eq:noiless_supp} gives
\[
\|\widehat\bbeta \|_0 \le \|\widehat\bbeta^\sigma\|_0  + \#\{j: 1-a \le | g^\sigma_j| < 1 \}  + cp.
\]
As in \eqref{eq:apply_continu_event1}, (3.21) of \cite{BM12} implies
that
\begin{equation}\nonumber
\# \left\{ j:  (1 - a) \le |g_j^\sigma| < 1 \right\}/p \plim  \P\left( (1-a)\alpha\tau \le |\Theta + \tau W| < \alpha\tau \right).
\end{equation}
Note that both $\alpha$ and $\tau$ depend on $\sigma$, and as $\sigma \goto 0$, $\alpha$ and $\tau$ converge to, respectively, $\alpha' > 0$ and $\tau' > 0$. Hence, we get
\begin{equation}\label{eq:noiless_suppx}
\|\widehat\bbeta\|_0 \le \|\widehat\bbeta^\sigma\|_0 + c'' p + o_{\P}(p)
\end{equation}
for some constant $c'' > 0$ that can be made arbitrarily small if $\sigma \goto 0$ by first taking $a$ and $c$ sufficiently small.

With some obvious notation, a combination of \eqref{eq:noiless_supp2}
and \eqref{eq:noiless_suppx} gives
\begin{equation}\label{eq:vt_approx}
|V - V^\sigma| \le c''' p, \quad |T - T^\sigma| \le c''' p, \
\end{equation}
for some constant $c''' = c'''_\sigma \goto 0$ as $\sigma \goto 0$. As $\sigma \goto 0$, observe the convergence, 
\[
\textnormal{fd}^{\mathsmaller\infty,\sigma} = 2(1-\epsilon)\Phi(-\alpha) \goto 2(1-\epsilon)\Phi(-\alpha') =  \textnormal{fd}^{\mathsmaller\infty, 0},
\]
and 
\[
\textnormal{td}^{\mathsmaller\infty,\sigma} = \epsilon\P(|\Pi^\star+ \tau W| > \alpha\tau) \goto \epsilon\P(|\Pi^\star+ \tau' W| > \alpha'\tau') = \textnormal{td}^{\mathsmaller\infty, 0}.
\]
By applying Lemma~\ref{lm:amp_theory} to $V^\sigma$ and $T^\sigma$ and making use of \eqref{eq:vt_approx}, the conclusions 
\[
\frac{V}{p} \plim \textnormal{fd}^{\mathsmaller\infty, 0} \quad \text{and} \quad  \frac{T}{p} \plim \textnormal{td}^{\mathsmaller\infty, 0}
\]
follow. 

Finally, the results for some fixed $\lambda$ can be carried over to a
bounded interval $[0.01, \lambda']$ in exactly the same way as in the
case where $\sigma > 0$. Indeed, the key ingredients,
namely, Lemmas~\ref{lm:lamb_uniform} and \ref{lm:l2_local_continu}
still hold. To extend the results to $\lambda > \lambda'$, we resort
to Lemma~\ref{lm:big_lamb}.

For a fixed a prior $\Pi$, our arguments immediately give an
  instance-specific trade-off. Let $q^\Pi(\cdot; \delta, \sigma)$ be
  the function defined as 
\[
q^\Pi \left(\textnormal{tpp}^{\mathsmaller\infty,
    \mathsmaller\sigma}(\lambda); \delta, \sigma \right) =
\textnormal{fdp}^{\mathsmaller\infty, \mathsmaller\sigma}(\lambda)
\]
for all $\lambda > 0$. It is worth pointing out that the sparsity
parameter $\epsilon$ is implied by $\Pi$ and that $q^\Pi$ depends on
$\Pi$ and $\sigma$ only through $\Pi/\sigma$ (if $\sigma = 0$, $q^\Pi$
is invariant by rescaling). By definition, we always have
\[
q^\Pi(u; \delta, \sigma) > q^\star(u)
\]
for any $u$ in the domain of $q^\Pi$. As is implied by the proof, it
is {impossible} to have a series of instances $\Pi$ such that
$q^\Pi(u)$ converges to $q^\star(u)$ at \textit{two} different
points. Now, we state the instance-specific version of Theorem
\ref{thm:min_fdr_given_power}.
\begin{theorem}\label{thm:instance_specific}
  Fix $\delta \in (0, \infty)$ and assume the working hypothesis.  In
  either the noiseless or noisy case and for any arbitrary small
  constants $\lambda_0 > 0$ and $\eta > 0$, the event
\[
\bigcap_{\lambda \ge \lambda_0} \Big\{ \fdp(\lambda) \ge q^\Pi\left(
  \power(\lambda) \right) - \eta \Big\}
\]
holds with probability tending to one. 
\end{theorem}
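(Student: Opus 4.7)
The plan is to reuse essentially every ingredient already assembled for the proof of Theorem~\ref{thm:min_fdr_given_power}, simply omitting the last step where the envelope $q^\star$ is obtained by minimizing over priors $\Pi$. By construction, the deterministic pair $(\fpower(\lambda), \ffdp(\lambda))$ lies \emph{exactly} on the curve $q^\Pi$ for every $\lambda$, so the strict inequality $\ffdp > q^\star(\fpower)$ supplied by Lemma~\ref{lm:alpha_large_gm} is replaced by equality and the problem reduces to transferring the pointwise AMP identity to a uniform statement over $\lambda \ge \lambda_0$.

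Concretely, in the noisy case $\sigma > 0$, I would split $[\lambda_0, \infty)$ into $[\lambda_0, \lambda']$ and $(\lambda', \infty)$. On the bounded piece, Lemma~\ref{lm:lamb_uniform} gives
\[
\sup_{\lambda_0 \le \lambda \le \lambda'} \bigl| \fdp(\lambda) - \ffdp(\lambda) \bigr| \plim 0, \qquad \sup_{\lambda_0 \le \lambda \le \lambda'} \bigl| \power(\lambda) - \fpower(\lambda) \bigr| \plim 0.
\]
Using the continuity of $\alpha(\lambda)$ and $\tau(\lambda)$ in $\lambda$ (Corollary 1.7 of \cite{BM12}) one obtains continuity of $q^\Pi$, and then by the defining identity $\ffdp(\lambda) = q^\Pi(\fpower(\lambda))$ and the triangle inequality, $\fdp(\lambda) \ge q^\Pi(\power(\lambda)) - \eta$ holds uniformly on $[\lambda_0, \lambda']$ with probability tending to one. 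On $(\lambda', \infty)$, Lemma~\ref{lm:big_lamb} shrinks the support of $\widehat{\bm\beta}(\lambda)$ so that $\power(\lambda)$ is uniformly bounded by an arbitrarily small constant; since $q^\Pi$ is continuous and $q^\Pi(0)=0$, choosing $\lambda'$ large enough makes $q^\Pi(\power(\lambda)) \le \eta$ throughout this range, so the target inequality is trivially verified.

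For the noiseless case $\sigma = 0$, the same perturbation argument used at the end of Appendix~\ref{sec:proof-theor-refthm:m} applies verbatim: compare $\widehat{\bm\beta}$ to the Lasso estimate $\widehat{\bm\beta}^\sigma$ computed from the noise-inflated response $\by^\sigma = \by + \bz$, use the non-expansive projection bound $\|\bX \widehat{\bm\beta} - \bX \widehat{\bm\beta}^\sigma\| \le \|\bz\|$ together with the support-stability estimates $|V - V^\sigma| \le c''' p$ and $|T - T^\sigma| \le c''' p$ with $c''' \to 0$ as $\sigma \to 0$, and observe that $\fd(\lambda;\sigma) \to \fd(\lambda;0)$ and $\td(\lambda;\sigma) \to \td(\lambda;0)$ by continuity of the AMP fixed point. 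The curve $q^\Pi$ for the noiseless problem is then the limit of its noisy counterparts, and the bound passes to the limit.

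The main obstacle I anticipate is a subtle well-definedness issue: $q^\Pi$ is specified by $q^\Pi(\fpower(\lambda)) = \ffdp(\lambda)$, which is a bona fide function of $\fpower$ only if distinct $\lambda$'s yielding the same TPP also yield the same FDP. This is not obvious because the Lasso path is irregular and, in principle, $\fpower$ need not be monotone in $\lambda$. The resolution goes through Lemma~\ref{lm:amp_theory}: $\fd(\lambda)$ and $\td(\lambda)$ are determined entirely by the pair $(\alpha, \tau)$ solving \eqref{eq:amp_eqn}, and Corollary 1.7 of \cite{BM12} furnishes a monotone parameterization; failing that, one defines $q^\Pi(u)$ as the infimum of $\ffdp(\lambda)$ over all $\lambda$ producing $\fpower(\lambda) = u$, which still majorizes $q^\star$ and preserves all the uniform convergence arguments above.
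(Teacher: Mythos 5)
Your proposal matches the paper's own (deliberately terse) treatment: the paper simply remarks that the arguments of Appendices~\ref{sec:uniform-over-lambda}--\ref{sec:proof-theor-refthm:m} ``immediately give an instance-specific trade-off,'' and you correctly reconstruct this by reusing Lemmas~\ref{lm:lamb_uniform} and \ref{lm:big_lamb} together with the $\sigma\to 0$ perturbation argument, while dropping the final optimization over priors from Lemma~\ref{lm:alpha_large_gm} so that the exact identity $\ffdp(\lambda)=q^\Pi(\fpower(\lambda))$ replaces the one-sided bound. The well-definedness concern you raise regarding $q^\Pi$ is a genuine subtlety the paper does not spell out; it does hold because $\lambda\mapsto\alpha$ is a bijection (Corollary~1.7 of \cite{BM12}) and $\fpower$ is monotone along the state-evolution parametrization, though your fallback infimum definition is a safe alternative to have flagged.
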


\subsection{Proof of Lemma~\ref{lm:big_lamb}}
\label{sec:proof-large-lambda}

Consider the KKT conditions restricted to $\mathcal{S}(\lambda)$:
\[
\bX_{\mathcal{S}(\lambda)}^{\transp}\left( \by - \bX_{\mathcal{S}(\lambda)} \widehat{\bm\beta}(\lambda) \right) = \lambda \bg(\lambda).
\]
Here we abuse the notation a bit by identifying both $\widehat\bbeta(\lambda)$ and $\bg(\lambda)$ as $|\mathcal{S}(\lambda)|$-dimensional vectors. As a consequence, we get
\begin{equation}\label{eq:Lasso_expr}
\widehat{\bm\beta}(\lambda) = (\bX_{\mathcal{S}(\lambda)}^{\transp} \bX_{\mathcal{S}(\lambda)})^{-1}(\bX_{\mathcal{S}(\lambda)}^{\transp} \by - \lambda \bg(\lambda)).
\end{equation}
Notice that $\bX_{\mathcal{S}(\lambda)}^{\transp} \bX_{\mathcal{S}(\lambda)}$ is invertible almost surely since the Lasso solution has at most $n$ nonzero components for \textit{generic} problems (see e.g. \cite{tibshirani2013Lasso}).
By definition, $\widehat{\bm\beta}(\lambda)$ obeys
\begin{equation}\label{eq:Lasso_def}
\frac12 \|\by - \bX_{\mathcal{S}(\lambda)} \widehat{\bm\beta}(\lambda) \|^2 + \lambda \|\widehat{\bm\beta}(\lambda) \|_1 \le \frac12 \|\by - \bX_{\mathcal{S}(\lambda)} \cdot \bm{0} \|^2 + \lambda \|\bm 0\|_1 = \frac12 \|\bm y\|^2.
\end{equation}
Substituting \eqref{eq:Lasso_expr} into \eqref{eq:Lasso_def} and applying the triangle inequality give
\begin{equation*}
\frac12 \left( \|\lambda \bX_{\mathcal{S}(\lambda)}(\bX_{\mathcal{S}(\lambda)}^{\transp} \bX_{\mathcal{S}(\lambda)})^{-1} \bg(\lambda)\| - \|\by -  \bX_{\mathcal{S}(\lambda)}(\bX_{\mathcal{S}(\lambda)}^{\transp} \bX_{\mathcal{S}(\lambda)})^{-1}\bX_{\mathcal{S}(\lambda)}^{\transp} \by\| \right)^2  + \lambda \|\widehat{\bm\beta}(\lambda) \|_1 \le \frac12 \|\bm y\|^2.
\end{equation*}
Since $\bm{I}_{|{\mathcal{S}(\lambda)}|} - \bX_{\mathcal{S}(\lambda)}(\bX_{\mathcal{S}(\lambda)}^{\transp} \bX_{\mathcal{S}(\lambda)})^{-1}\bX_{\mathcal{S}(\lambda)}^{\transp}$ is simply a projection, we get
\[
\|\by -  \bX_{\mathcal{S}(\lambda)}(\bX_{\mathcal{S}(\lambda)}^{\transp} \bX_{\mathcal{S}(\lambda)})^{-1}\bX_{\mathcal{S}(\lambda)}^{\transp} \by\|  \le \|\by\|.
\]
Combining the last displays gives
\begin{equation}\label{eq:Lasso_opt_conseq}
\lambda \|\bX_{\mathcal{S}(\lambda)}(\bX_{\mathcal{S}(\lambda)}^{\transp} \bX_{\mathcal{S}(\lambda)})^{-1} \bg(\lambda)\| \le 2\|\bm y\|,
\end{equation}
which is our key estimate.

Since $\sigma_{\max}(\bX) = (1 + o_{\P}(1))(1 +
1/\sqrt{\delta})$, 
\begin{equation}\label{eq:Lasso_supsize}
\lambda \|\bX_{\mathcal{S}(\lambda)}(\bX_{\mathcal{S}(\lambda)}^{\transp} \bX_{\mathcal{S}(\lambda)})^{-1} \bg(\lambda)\| \ge (1 + o_{\P}(1)) \frac{\lambda \|\bg(\lambda)\| }{1 + 1/\sqrt{\delta}} = (1 + o_{\P}(1)) \frac{\lambda \sqrt{|{\mathcal{S}(\lambda)}|}}{1 + 1/\sqrt{\delta}}.
\end{equation}
As for the right-hand side of \eqref{eq:Lasso_opt_conseq}, the law of large numbers reveals that
\[
2 \|\by\| = 2 \|\bX \bbeta + \bz\| = (2 + o_{\P}(1))\sqrt{n(\|\bbeta\|^2/n + \sigma^2)} = (2 + o_{\P}(1)) \sqrt{p\E\Theta^2 + n\sigma^2}.
\]
Combining the last two displays, it follows from
\eqref{eq:Lasso_opt_conseq} and \eqref{eq:Lasso_supsize} that
\[
\|\widehat{\bm\beta}(\lambda)\|_0 \equiv |{\mathcal{S}(\lambda)}_{\lambda}| \le (4 + o_{\P}(1)) \frac{(p \E\Theta^2 + n\sigma^2)(1+1/\sqrt{\delta})^2}{\lambda^2} = (1 + o_{\P}(1)) \frac{C p}{\lambda^2}
\]
for some constant $C$. It is worth emphasizing that the term $o_{\P}(1)$ is independent of any $\lambda > 0$. Hence, we finish the proof by choosing any $\lambda' > \sqrt{C/c}$.


\section{Proof of Theorem 3.1}
\label{sec:proof-theor-refthm:l}
We propose two preparatory lemmas regarding the $\chi^2$-distribution, which will be used in the proof of the theorem. 
\begin{lemma}\label{lm:gauss_concentration}
For any positive integer $d$ and $t \ge 0$, we have
\[
\P(\chi_d \ge \sqrt{d} + t) \le \e^{-t^2/2}.
\]
\end{lemma}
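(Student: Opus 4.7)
The plan is to obtain this as a direct consequence of Gaussian concentration for Lipschitz functions. Realize $\chi_d$ as the Euclidean norm of a standard Gaussian vector: if $\bm{X} \sim \mathcal{N}(\bm{0}, \bm{I}_d)$, then $\|\bm{X}\|_2$ has exactly the $\chi_d$ distribution.

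First I would observe that the map $f: \R^d \to \R$ given by $f(\bm{x}) = \|\bm{x}\|_2$ is $1$-Lipschitz with respect to the Euclidean norm, since $|\|\bm{x}\|_2 - \|\bm{y}\|_2| \le \|\bm{x} - \bm{y}\|_2$ by the reverse triangle inequality. Next, I would bound the mean by Jensen's inequality:
\[
\E \|\bm{X}\|_2 \le \sqrt{\E \|\bm{X}\|_2^2} = \sqrt{d}.
\]

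The main step is then to apply the standard Gaussian concentration (Borell--TIS) inequality, which states that for any $1$-Lipschitz $f$ and $\bm{X} \sim \mathcal{N}(\bm{0},\bm{I}_d)$,
\[
\P\bigl( f(\bm{X}) \ge \E f(\bm{X}) + t \bigr) \le \e^{-t^2/2}
\]
for all $t \ge 0$. Combining the two observations,
\[
\P(\chi_d \ge \sqrt{d} + t) \le \P\bigl(\|\bm{X}\|_2 \ge \E\|\bm{X}\|_2 + t\bigr) \le \e^{-t^2/2},
\]
which is the desired bound.

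There is no real obstacle here; the only decision is whether to invoke Borell--TIS as a black box or to give a self-contained argument (e.g., via the log-Sobolev inequality on Gauss space or the Gaussian isoperimetric inequality). Since the paper already freely uses standard Gaussian concentration facts elsewhere, citing the Lipschitz concentration inequality directly seems appropriate and keeps the proof to essentially two lines.
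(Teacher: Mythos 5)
Your proof is correct and follows exactly the route the paper indicates: the paper simply remarks that the lemma ``can be derived by the Gaussian concentration inequality, also known as the Borell's inequality,'' and your write-up fills in the standard details (representing $\chi_d$ as $\|\bm{X}\|_2$, observing that the Euclidean norm is $1$-Lipschitz, bounding $\E\|\bm{X}\|_2 \le \sqrt{d}$ by Jensen, and invoking Borell--TIS). There is no substantive difference between your argument and the one the paper has in mind.
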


\begin{lemma}\label{lm:small_ball}
For any positive integer $d$ and $t \ge 0$, we have
\[
\P(\chi^2_d \le td) \le (\e t)^{\frac{d}{2}}.
\]
\end{lemma}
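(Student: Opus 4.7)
The plan is to prove this standard small-ball estimate for chi-squared variables by a Chernoff-style argument applied to $-\chi_d^2$, then optimize the free parameter.

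First I would observe that the bound is trivial when $t \ge 1$, since the right-hand side $(\e t)^{d/2} \ge \e^{d/2} > 1$, so we may restrict attention to $t \in (0,1)$ (and the $t=0$ case follows by continuity since $\P(\chi_d^2 \le 0) = 0$). For $t \in (0,1)$, the main step is to apply Markov's inequality to $\exp(-\lambda \chi_d^2)$ for a parameter $\lambda > 0$ to be chosen:
\[
\P(\chi_d^2 \le td) \;=\; \P\bigl(\e^{-\lambda \chi_d^2} \ge \e^{-\lambda t d}\bigr) \;\le\; \e^{\lambda t d}\, \E\bigl[\e^{-\lambda \chi_d^2}\bigr] \;=\; \e^{\lambda t d}\,(1 + 2\lambda)^{-d/2},
\]
using the standard moment generating function of the chi-squared distribution.

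Next I would optimize over $\lambda > 0$. Setting the derivative of $\lambda t d - (d/2)\log(1+2\lambda)$ in $\lambda$ to zero yields $1 + 2\lambda = 1/t$, hence $\lambda^\star = (1-t)/(2t) > 0$ precisely because $t < 1$. Substituting back gives
\[
\P(\chi_d^2 \le td) \;\le\; \e^{(1-t)d/2}\, t^{d/2} \;=\; \bigl(\e^{\,1-t}\, t\bigr)^{d/2}.
\]
Finally, using the elementary inequality $\e^{-t} \le 1$ for $t \ge 0$, we get $\e^{\,1-t} \le \e$, and therefore
\[
\bigl(\e^{\,1-t}\, t\bigr)^{d/2} \;\le\; (\e t)^{d/2},
\]
which is the desired bound.

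There is no real obstacle here; the only mild pitfall is keeping track of which regime of $t$ makes the Chernoff optimization valid (one needs $\lambda^\star > 0$, hence $t<1$), but the complementary regime $t \ge 1$ is handled trivially since the bound exceeds $1$. The entire argument is roughly five lines once the chi-squared MGF is quoted.
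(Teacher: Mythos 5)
Your proof is correct, and it takes a genuinely different route from the paper. The paper proceeds by a direct estimate of the $\chi^2_d$ density integral: it bounds $\e^{-x/2}\le 1$ on $[0,td]$ to reduce to an elementary power integral, yielding $\P(\chi^2_d\le td)\le 2(td)^{d/2}/\bigl(d\,2^{d/2}\Gamma(d/2)\bigr)$, and then invokes Stirling's formula to bound $\Gamma(d/2)$ from below and produce the factor $(\e t)^{d/2}$. Your argument instead applies the Chernoff method to $-\chi^2_d$ using the closed-form MGF $\E\e^{-\lambda\chi^2_d}=(1+2\lambda)^{-d/2}$, optimizes over $\lambda$, and then relaxes $\e^{1-t}\le\e$. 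The two approaches are comparable in length, but yours has a couple of mild advantages: it sidesteps Stirling's formula entirely (which the paper cites without spelling out the constants, and whose exact form one has to be careful with for the gamma function at half-integer arguments), and it actually produces the strictly sharper intermediate bound $(\e^{1-t}t)^{d/2}$ before weakening to the statement as claimed. The paper's approach, by contrast, needs no knowledge of the MGF and is self-contained at the level of the density. You also correctly flag the only subtle point in the Chernoff route—that the optimizer $\lambda^\star=(1-t)/(2t)$ is positive only for $t<1$—and handle the complementary regime by noting the bound is trivial there.
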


The first lemma can be derived by the Gaussian concentration inequality, also known as the Borell's inequality. The second lemma has a simple proof:
\begin{align*}
\P(\chi^2_d \le td) &= \int_0^{td} \frac1{2^{\frac{d}{2}}\Gamma(\frac{d}{2})}  x^{\frac{d}{2}-1} \e^{-\frac{x}{2}} \mathrm{d}x\\
&\le \int_0^{td} \frac1{2^{\frac{d}{2}}\Gamma(\frac{d}{2})}  x^{\frac{d}{2}-1} \mathrm{d} x =  \frac{2(td)^{\frac{d}{2}}}{d2^{\frac{d}{2}} \Gamma(\frac{d}{2})}.
\end{align*}
Next, Stirling's formula gives 
\[
\P(\chi^2_d \le td) \le  \frac{2(td)^{\frac{d}{2}}}{d2^{\frac{d}{2}} \Gamma(\frac{d}{2})} \le \frac{2(td)^{\frac{d}{2}}}{d2^{\frac{d}{2}} \sqrt{\pi d}(\frac{d}{2})^{\frac{d}{2}} \e^{-\frac{d}{2}}} \le (\e t)^{\frac{d}{2}}.
\]

Now, we turn to present the proof of Theorem~\ref{thm:l0}. Denote by $\mathcal{S}$ a subset of $\{1, 2, \ldots, p\}$, and let $m_0 = |\mathcal{S} \cap \{j: \beta_j = 0\}|$ and $m_1 = |\mathcal{S} \cap \{j: \beta_j \ne 0\}|$. Certainly, both $m_0$ and $m_1$ depend on $\mathcal{S}$, but the dependency is often omitted for the sake of simplicity. As earlier, denote by $k = \#\{j: \beta_j \ne 0\}$, which obeys $k = (\epsilon + o_{\P}(1))p$. Write $\widehat\bbeta^{\textnormal{LS}}_{\mathcal{S}}$ for the least-squares estimate obtained by regressing $\by$ onto $\bX_{\mathcal{S}}$. Observe that \eqref{eq:l0_ls} is equivalent to solving
\begin{equation}\label{eq:l0_ic}
\argmin_{\mathcal{S} \subset \{1, \ldots, p\}} \,  \| \by - \bX_{\mathcal{S}} \widehat\bbeta^{\textnormal LS}_{\mathcal{S}}\|^2 + \lambda |\mathcal{S}|.
\end{equation}
As is clear from \eqref{eq:l0_ls}, we only need to focus on
$\mathcal{S}$ with cardinality no more than $\min\{n, p\}$. Denote by
$\widehat{\mathcal{S}}$ the solution to \eqref{eq:l0_ic}, and define
$\widehat m_0$ and $\widehat m_1$ as before. To prove Theorem~\ref{thm:l0} it suffices to show the following: for
arbitrary small $c > 0$, we can find $\lambda$ and $M$ sufficiently
large such that \eqref{eq:l0_ls} gives
\begin{equation}\label{eq:thml0_key}
\P(\widehat m_0 > 2ck ~\mbox{or}~ \widehat m_1 \le (1-c)k ) \goto 0.
\end{equation}
Assume this is true. Then from \eqref{eq:thml0_key} we see that $\widehat m_0 \le 2ck$ and $\widehat m_1 > (1-c)k$ hold with probability tending to one. On this event, the TPP is
\[
 \frac{\widehat m_1}{k} > 1-c,
\]
and the FDP is
\[
\frac{\widehat m_0}{\widehat m_0 + \widehat m_1} \le \frac{2ck}{2ck + (1-c)k} = \frac{2c}{1+c}.
\]
Hence, we can have arbitrarily small FDP and almost full power by
setting $c$ arbitrarily small.

It remains to prove \eqref{eq:thml0_key} with proper choices of
$\lambda$ and $M$. Since
\[
\{\widehat m_0 > 2ck ~\mbox{or}~ \widehat m_1 \le (1-c)k\} \subset \{\widehat m_0 + \widehat m_1 > (1+c)k\} \cup \{\widehat m_1 \le (1-c)k\},
\]
we only need to prove
\begin{equation}\label{eq:thml0_key1}
\P(\widehat m_1 \le (1-c)k) \goto 0
\end{equation}
and
\begin{equation}\label{eq:thml0_key2}
\P(\widehat m_0 + \widehat m_1 > (1+c)k) \goto 0.
\end{equation}

We first work on \eqref{eq:thml0_key1}. Write
\[
\by = \sum_{j \in \mathcal{S}, \beta_j = M} M \bX_j + \sum_{j \in \overline{\mathcal{S}}, \beta_j = M} M \bX_j + \bz.
\]
In this decomposition, the summand
$\sum_{j \in \mathcal{S}, \beta_j = M} M \bX_j$ is already in the span
of $\bX_{\mathcal{S}}$. This fact implies that the residual vector
$\by - \bX_{\mathcal{S}} \widehat\bbeta^{\textnormal
  LS}_{\mathcal{S}}$
is the same as the projection of
$\sum_{j \in \overline{\mathcal{S}}, \beta_j = M} M \bX_j + \bz$ onto
the orthogonal complement of $\bX_{\mathcal{S}}$. Thanks to the
independence among $\bbeta, \bX$ and $\bz$, our discussion proceeds by
conditioning on the random support set of $\bbeta$. A crucial but
simple observation is that the orthogonal complement of
$\bX_{\mathcal{S}}$ of dimension $n - m_0 - m_1$ has uniform
orientation, independent of
$\sum_{j \in \overline{\mathcal{S}}, \beta_j = M} M \bX_j + \bz$. From
this fact it follows that
\begin{equation}\label{eq:dist_min}
L(\mathcal{S}) :=  \| \by - \bX_{\mathcal{S}} \widehat\bbeta^{\textnormal LS}_{\mathcal{S}}\|^2 + \lambda |\mathcal{S}| \overset{d}{=}  \left( \sigma^2 + M^2(k-m_1)/n \right) \chi^2_{n-m_0-m_1} + \lambda (m_0 + m_1).
\end{equation}
Call $E_{\mathcal{S},u}$ the event on which 
\[
L(\mathcal{S}) \le \sigma^2 (n-k+2u\sqrt{n-k}+u^2) + \lambda k
\]
holds; here, $u > 0$ is a constant to be specified later. In the
special case where $\mathcal{S} = \mathcal{T}$ and 
$\mathcal{T} \equiv \{j: \beta_j \ne 0\}$ is the true support,
Lemma~\ref{lm:gauss_concentration} says that this event has
probability bounded as 
\begin{equation}\label{eq:good_truesol}
\begin{aligned}
\P(E_{\mathcal{T},u}) &= \P\left( \sigma^2 \chi^2_{n-k} + \lambda k \le \sigma^2 (n-k+2u\sqrt{n-k}+u^2) + \lambda k \right) \\
&= \P\left( \chi^2_{n-k}  \le n-k+2u\sqrt{n-k}+u^2 \right)  \\ & 
\ge 1 - \e^{-\frac{u^2}{2}}.
\end{aligned}
\end{equation}
By definition, $E_{\widehat{\mathcal S},u}$ is implied by $E_{\mathcal{T}, u}$. Using this fact, we will show that $\widehat m_1$ is very close to $k$, thus validating \eqref{eq:thml0_key1}. By making use of 
\[
\{\widehat m_1 \le (1-c)k\} \subset \{\widehat m_0 + \widehat m_1 \ge (k+n)/2\} \cup \{\widehat m_1 \le (1-c)k, \widehat m_0 + \widehat m_1 < (k+n)/2\},
\]
we see that it suffices to establish that 
\begin{equation}\label{eq:key1_part1}
\P(\widehat m_0 + \widehat m_1 \ge (k+n)/2) \goto 0
\end{equation}
and
\begin{equation}\label{eq:key1_part2}
\P(\widehat m_1 \le (1-c)k, \widehat m_0 + \widehat m_1 < (k+n)/2) \goto 0
\end{equation}
for some $\lambda$ and sufficient large $M$. For \eqref{eq:key1_part1}, we have
\begin{equation}\label{eq:event_decomp_kn}
\begin{aligned}
\P(\widehat m_0 + \widehat m_1 \ge (k+n)/2) &\le \P(\overline E_{\mathcal T, u}) + \P(E_{\mathcal{T},u} \cap \{\widehat m_0 + \widehat m_1 \ge (k+n)/2\})\\
& \le \P(\overline E_{\mathcal T, u}) + \P(E_{\widehat{\mathcal{S}},u} \cap \{\widehat m_0 + \widehat m_1 \ge (k+n)/2\})\\
& \le \P(\overline E_{\mathcal T, u}) + \sum_{m_0 + m_1 \ge (k+n)/2} \P(E_{\mathcal{S},u}) \\
& \le \e^{-\frac{u^2}{2}}+ \sum_{m_0 + m_1 \ge (k+n)/2} \P(E_{\mathcal{S},u}),
\end{aligned}
\end{equation}
where the last step makes use of \eqref{eq:good_truesol}, and the
summation is over all $\mathcal{S}$ such that
$m_0(\mathcal{S}) + m_1(\mathcal{S}) \ge (k+n)/2$. Due to
\eqref{eq:dist_min}, the event $\E_{\mathcal{S},u}$ has the same
probability as
\begin{equation}\label{eq:event_E}
\begin{aligned}
&\left( \sigma^2 + M^2(k-m_1)/n \right) \chi^2_{n-m_0-m_1} + \lambda (m_0 + m_1) \le \sigma^2 (n-k+2u\sqrt{n-k}+u^2) + \lambda k \\
& \Longleftrightarrow \chi^2_{n-m_0-m_1}   \le \frac{\sigma^2 (n-k+2u\sqrt{n-k}+u^2) + \lambda k - \lambda (m_0 + m_1)}{\sigma^2 + M^2(k-m_1)/n }.
\end{aligned}
\end{equation}
Since $m_0 + m_1 \ge (k+n)/2$, we get
\[
\sigma^2 (n-k+2u\sqrt{n-k}+u^2) + \lambda k - \lambda (m_0 + m_1) \le \sigma^2 (n-k+2u\sqrt{n-k}+u^2) - \lambda (n-k)/2.
\]
Requiring
\begin{equation}\label{eq:lamb_cond1}
\lambda > 2\sigma^2,
\end{equation}
would yield $\sigma^2 (n-k+2u\sqrt{n-k}+u^2) - \lambda (n-k)/2 < 0$ for
sufficiently large $n$ (depending on $u$) as $n-k \goto \infty$.  In
this case, we have $\P(E_{\mathcal{S},u}) = 0$ whenever
$m_0 + m_1 \ge (k+n)/2$. Thus, taking $u \goto \infty$ in
\eqref{eq:event_decomp_kn} establishes \eqref{eq:key1_part1}.

Now we turn to \eqref{eq:key1_part2}. Observe that
\begin{equation}\label{eq:two_joint_event}
\begin{aligned}
 & \P\left(\widehat m_1 \le (1-c)k ~ \mbox{and}  ~ \widehat m_0 + \widehat m_1 < (k+n)/2 \right) \\
  & \qquad \qquad \qquad \qquad \le \P(\overline E_{\mathcal{T},u}) + \P(E_{\mathcal{T},u} \cap \{\widehat m_1 \le (1-c)k ~ \mbox{and} ~ \widehat m_0 + \widehat m_1 < (k+n)/2 \})\\
  & \qquad \qquad \qquad \qquad \le \e^{-\frac{u^2}{2}} +  \P(E_{\widehat{\mathcal{S}},u} \cap \{\widehat m_1 \le (1-c)k ~ \mbox{and} ~ \widehat m_0 + \widehat m_1 < (k+n)/2 \})\\
  & \qquad \qquad \qquad \qquad \le \e^{-\frac{u^2}{2}} +\sum_{m_0 + m_1 < \frac{k+n}{2}, m_1 \le (1-c)k} \P(E_{\mathcal{S},u}). \\
\end{aligned}
\end{equation}
For $m_0 + m_1 < (k+n)/2$ and $m_1 \le (1-c)k$, notice that $n-m_0-m_1 > (n-k)/2 = (\delta -\epsilon + o_{\P}(1))p/2$, and $M^2(k-m_1)/n \ge cM^2k/n \sim (c\epsilon/\delta+o_{\P}(1)) M^2$.
Let $t_0>0$ be a constant obeying
\[
\frac{\delta-\epsilon}{5} (1 + \log t_0) + \log 2 < -1,
\]
then choose $M$ sufficiently large such that
\begin{equation}\label{eq:m_cond1}
\frac{2\sigma^2 (\delta -\epsilon) + 2\lambda \epsilon }{(\sigma^2 + c\epsilon M^2/\delta)(\delta-\epsilon) } < t_0.
\end{equation}
This gives
\[
\frac{\sigma^2 (n-k+2u\sqrt{n-k}+u^2) + \lambda k - \lambda (m_0 + m_1)}{(\sigma^2 + M^2(k-m_1)/n)(n-m_0-m_1) } < t_0 
\]
for sufficiently large $n$. Continuing \eqref{eq:two_joint_event} and applying Lemma~\ref{lm:small_ball}, we get
\begin{equation}\label{eq:rare_events}
\begin{aligned}
&\P\left(\widehat m_1 \le (1-c)k ~ \mbox{and} ~ \widehat m_0 + \widehat m_1 < (k+n)/2 \right) \\
&\qquad \qquad \qquad  \le \e^{-\frac{u^2}{2}} + \sum_{m_0 + m_1 < \frac{k+n}{2}, m_1 \le (1-c)k} \P(\chi^2_{n-m_0-m_1} \le  t_0 (n-m_0-m_1)) \\
&\qquad \qquad \qquad  \le \e^{-\frac{u^2}{2}} + \sum_{m_0 + m_1 < \frac{k+n}{2}, m_1 \le (1-c)k} (\e t_0)^{\frac{n-m_0-m_1}{2}}  \\
&\qquad \qquad \qquad  \le \sum_{m_0 + m_1 < (k+n)/2, m_1 \le (1-c)k} (\e t_0)^{\frac{(\delta-\epsilon)p}{5}}\\
&\qquad \qquad \qquad \le \e^{-\frac{u^2}{2}} +  2^p (\e t_0)^{\frac{(\delta-\epsilon)p}{5}}\\
& \qquad \qquad \qquad  \le \e^{-\frac{u^2}{2}} + \e^{-p}.
\end{aligned}
\end{equation}
Taking $u \goto \infty$ proves \eqref{eq:key1_part2}.

Having established \eqref{eq:thml0_key1}, we proceed to prove \eqref{eq:thml0_key2}. By definition,
\[
\begin{aligned}
 \| \by - \bX \widehat \bbeta\|^2 + \lambda \|\widehat \bbeta\|_0 & =  \| \by - \bX_{\widehat{\mathcal S}} \widehat \bbeta_{\widehat{\mathcal S}}^{\textnormal LS}\|^2 + \lambda \|\bbeta_{\widehat{\mathcal S}}^{\textnormal LS}\|_0\\
&\le  \| \by - \bX \bbeta\|^2 + \lambda \|\bbeta\|_0 \\
& =  \|\bz\|^2 + \lambda k.
\end{aligned}
\]
If
\begin{equation}\label{eq:lamd_cond_last}
\lambda > \frac{\sigma^2 \delta}{c\epsilon},
\end{equation}
then
\[
\widehat m_0 + \widehat m_1  \le \frac{\|\bz\|^2}{\lambda} + k = (1+o_{\P}(1))\frac{\sigma^2 n}{\lambda} + k \le (1 +c)k
\]
holds with probability tending to one, whence \eqref{eq:thml0_key2}.

To recapitulate, selecting $\lambda$ obeying \eqref{eq:lamb_cond1} and
\eqref{eq:lamd_cond_last}, and $M$ sufficiently large such that
\eqref{eq:m_cond1} holds, imply that \eqref{eq:thml0_key} holds. The proof of the theorem is complete.

\end{document}